\newcommand{\dd}{\mathrm{d}}
\newcommand{\Ric}{\operatorname{Ric}}
\newcommand{\scal}{\operatorname{scal}}
\newcommand{\cut}{\operatorname{Cut}}
\newcommand{\g}{\mathrm g}
\newcommand{\h}{\mathrm h}
\newcommand{\cvc}{\operatorname{cvc}}
\newcommand{\spn}{\operatorname{span}}
\newcommand{\dist}{\operatorname{dist}}
\newcommand{\Id}{\operatorname{Id}}
\newcommand{\tr}{\operatorname{tr}}
\newcommand{\sff}{\mathrm{I\!I}}
\newcommand{\Z}{\mathds Z}
\newcommand{\R}{\mathds R}
\newcommand{\C}{\mathds C}
\newcommand{\SL}{\mathsf{SL}}
\newtheorem{theorem}{Theorem}[]
\newtheorem{lemma}[theorem]{Lemma}
\newtheorem{proposition}[theorem]{Proposition}
\newtheorem{corollary}[theorem]{Corollary}
\newtheorem{mainthm}{\sc Theorem}
\theoremstyle{definition}
\newtheorem{definition}[theorem]{Definition}
\theoremstyle{remark}
\newtheorem{remark}[theorem]{Remark}
\newtheorem{convention}[theorem]{Convention}
\title{Three-manifolds with many flat planes}
\author[R. G. Bettiol]{Renato G. Bettiol}
\author[B. Schmidt]{Benjamin Schmidt}
\address{\begin{tabular}{lll}
University of Pennsylvania & & Michigan State University \\
Department of Mathematics & & Department of Mathematics \\
209 South 33rd St  & & 619 Red Cedar Road \\
Philadelphia, PA, 19104-6395, USA  & & East Lansing, MI, 48824, USA\\
\emph{E-mail address}: {\tt rbettiol@math.upenn.edu} & & \emph{E-mail address}: {\tt schmidt@math.msu.edu}
\end{tabular}
}
\numberwithin{equation}{section}
\numberwithin{theorem}{section}
\thanks{The first named author was partially supported by the NSF grant DMS-1209387. The second named author is partially supported by the NSF grant DMS-1207655.}
\subjclass[2010]{53B21, 53C20, 53C21, 53C24, 58A07, 58J60}
\date{April 25, 2016}
\begin{document}
\begin{abstract}
We discuss the rigidity (or lack thereof) imposed by different notions of having an abundance of zero curvature planes on a complete Riemannian $3$-manifold. We prove a rank rigidity theorem for complete $3$-manifolds, showing that having higher rank is equivalent to having reducible universal covering. We also study $3$-manifolds such that every tangent vector is contained in a flat plane, including examples with irreducible universal covering, and discuss the effect of finite volume and real-analiticity assumptions.
\end{abstract}

\maketitle

\section{Introduction}
The Geometrization Conjecture and its resolution illustrate how closely the topology and the geometry of closed $3$-manifolds are related. However, many specific mechanisms through which curvature restricts the geometry of $3$-manifolds remain to be explored. In this paper, we are concerned with $3$-manifolds that have an abundance of flat tangent planes at every point. Namely, we study how the global arrangement of these flat planes constrains the geometry and topology of the underlying $3$-manifold.

A classical measure of how many flat planes a Riemannian manifold $M$ has is given by its \emph{rank}, defined as the least number of linearly independent parallel Jacobi fields along geodesics of $M$. 
Since the velocity field along a geodesic is a parallel Jacobi field, all manifolds have rank at least one. Consequently, $M$ is said to have \emph{higher rank} if it has rank at least $2$.
The geodesics of a manifold with rank $k\geq2$ infinitesimally appear to lie in a copy of $k$-dimensional Euclidean space.
The presence of parallel Jacobi fields encodes global information about the geometric arrangement of flat planes. Under so-called \emph{rank rigidity} conditions, these infinitesimal variations integrate to totally geodesic flat submanifolds, imposing a rigid structure on $M$.

Our first and main result is a characterization of higher rank $3$-manifolds:

\begin{mainthm}\label{thm:A}
A complete $3$-dimensional Riemannian manifold $M$ has higher rank if and only if its universal covering splits isometrically as $\widetilde M=N\times \R$.
\end{mainthm}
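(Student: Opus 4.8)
The plan is to prove the two implications separately; the reverse implication is a direct computation, while the forward implication (higher rank forcing a metric splitting) carries essentially all of the difficulty. For the reverse implication, suppose $\widetilde M = N\times\R$. The unit field $E$ tangent to the $\R$-factor is globally parallel, so $R(E,\cdot)\cdot$ vanishes identically, and hence along any geodesic $\gamma$ the restriction $E|_\gamma$ is a parallel Jacobi field. If $\gamma$ is not a vertical line then $E$ and $\dot\gamma$ are linearly independent, giving two parallel Jacobi fields; if $\gamma$ is vertical then $\dot\gamma=\pm E$, and the two coordinate directions of $TN$, parallel-transported along $\gamma$, are parallel Jacobi fields because $R(\cdot,E)E=0$. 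Thus every geodesic of $\widetilde M$ carries at least two independent parallel Jacobi fields, and since rank is invariant under the local isometry $\widetilde M\to M$, the manifold $M$ has higher rank.

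For the forward implication I would pass to $\widetilde M$ and reformulate: rank at least two means that along every geodesic there is a parallel unit normal field $e$ with $R(e,\dot\gamma)\dot\gamma=0$, i.e.\ a flat plane $\spn(\dot\gamma,e)$ that stays parallel along $\gamma$. If the rank is $3$ everywhere this forces $R\equiv0$ and $\widetilde M=\R^3=\R^2\times\R$, so the crux is the rank-two case. Here I would exploit that in dimension three the curvature is determined by $\Ric$: expressing the eigenvalues of the curvature operator through the sectional curvatures of the $\Ric$-eigenplanes, the pointwise condition ``$v$ lies in a flat plane for every $v$'' already forces the middle eigenvalue to vanish, and the stronger requirement $R(e,v)v=0$ (not merely $\sec=0$) rules out the remaining indefinite case. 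The outcome I expect is that at every non-flat point $\Ric$ has the form $\{0,c,c\}$ with a simple zero eigenvalue whose eigenline $E$ lies in every flat plane and satisfies $R(E,\cdot)\cdot=0$.

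It then remains to show that this distinguished line field $E$ is parallel, after which the de Rham decomposition of the complete simply connected $\widetilde M$ yields $\widetilde M=N\times\R$. The contracted second Bianchi identity applied to $\Ric=c\,(\g-E^\flat\otimes E^\flat)$ gives $\nabla_E E=0$ directly, since the component of $\operatorname{div}\Ric-\tfrac12\,\nabla\scal$ orthogonal to $E$ reduces to $c\,\nabla_E E$. To obtain $\nabla_X E=0$ for $X\perp E$, I would run the Jacobi-field argument along a geodesic $\gamma$ with $\dot\gamma(0)\perp E$: the flat plane $\spn(\dot\gamma,e)$ must contain $E$, which forces the parallel normal field $e$ to start tangent to $E$, and I would argue that $e$ coincides with the eigenline field $E$ along all of $\gamma$, so that $E$ is parallel along $\gamma$ and $\nabla_{\dot\gamma}E=0$.

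The main obstacle is exactly this last step: promoting the infinitesimal, along-each-geodesic flatness to a genuinely parallel vector field on $\widetilde M$. Two points require real care. First, one must show that the parallel Jacobi field $e$ does not tilt away from the $\Ric$-eigenline as one moves along $\gamma$ (equivalently, that $E^\perp$-geodesics remain perpendicular to $E$), so that $e=\pm E$ along the whole geodesic rather than only at the initial point. Second, one must control the flat locus $\{c=0\}$, where $E$ is a priori undefined, and show that $E$ extends across it to a globally parallel field on $\widetilde M$. I expect the flat locus and the possibility of eigenvalue collisions to be precisely where a delicate continuity-and-connectedness argument, or the real-analyticity hypotheses discussed elsewhere in the paper, must enter.
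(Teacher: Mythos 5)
Your reverse implication is fine, and your pointwise analysis is essentially the paper's: the diagonalization of the curvature operator and the use of the full condition $R(e,v)v=0$ (not just $\sec=0$) to kill two of the three eigencurvatures is Proposition~\ref{prop:signed}, and the resulting distinguished line $L_p$ at nonisotropic points is Lemma~\ref{lemma:cvc}; your Bianchi-identity derivation of $\nabla_E E=0$ is the geodesy statement imported from Schmidt--Wolfson in Theorem~\ref{thm:summary}. But what you label the ``main obstacle'' --- showing the parallel Jacobi field does not tilt away from $L$ along a geodesic, and extending $L$ across the flat locus --- is not a technical remainder; it is the entire content of the theorem, and your proposal contains no mechanism for either step. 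The along-geodesic alignment argument cannot work as sketched: a geodesic leaving a nonisotropic component enters the set $\mathcal I$ where $E$ is undefined, and may re-enter a component where the line field is incompatibly aligned. Indeed the graph-manifold examples behind Theorem~\ref{thm:B} satisfy everything you have established up to this point ($\cvc(0)$, pointwise signed curvature, $L$ geodesic and even parallel on each nonisotropic component), yet have irreducible universal covering --- so no argument using only the pointwise flat-plane structure plus component-local parallelism can close the gap; the higher rank hypothesis must enter globally. Your fallback suggestion of invoking real-analyticity is not available: Theorem~\ref{thm:A} assumes only a smooth complete metric, and analyticity is precisely the extra hypothesis that distinguishes Theorem~\ref{thm:C} (which treats the strictly weaker $\cvc(0)$ condition).

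The missing idea in the paper is the totally geodesic flat open book decomposition: for $p\in\mathcal O$, every $2$-plane $\sigma\supset L_p$ exponentiates to a totally geodesic flat immersed surface $\Sigma=\exp_p(\sigma)$ (Proposition~\ref{prop:flats}, proved via the adapted frame of Section~\ref{adapted} and the computation that the rank distribution on $S_pM$ is totally geodesic, Proposition~\ref{prop:totgeo-iso}). This settles your first worry: the flats force $L^\perp$ to be totally geodesic on $\mathcal O$ (Corollary~\ref{corollary:perptotgeo}), and then the evolution equation for the shape operator of $L^\perp$ along integral curves of $L$ forces the shape operator to vanish, i.e.\ $\nabla L=0$ on each component (Proposition~\ref{prop:totgeoandparallel}) --- note this needs care since $L^\perp$ is a priori not integrable, so ``totally geodesic'' does not immediately give a vanishing second fundamental form. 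It also settles your second worry: each flat page carries its foliation by lines parallel to $\exp_p(L_p)$, and the union of these foliations defines a line field $X^p$ on all of $U_p=M\setminus\cut(p)$, including across $\mathcal I$, which is shown to be parallel by an ODE continuation argument through the isotropic set (Lemma~\ref{isotoo}, Proposition~\ref{prop:keyprop2}). Finally, globalization is not a soft continuity-and-connectedness argument: the fields $X^p$ for $p$ in a small nonisotropic ball are shown to agree on overlaps by a transversality argument at isotropic points --- two flat pages through $x\in\mathcal I$ cut out great circles in $S_xM$ tangent to the rank distribution, and Corollary~\ref{cor:nocircles} plus Lemma~\ref{lemma:patch} force their rank-$3$ directions to coincide --- after which the glued parallel line field and the de Rham theorem finish the proof. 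Without some substitute for these flats, your outline stops exactly where the theorem begins.
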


To our knowledge, this is the first rank rigidity theorem that only assumes completeness of the Riemannian metric. Besides not requiring any curvature bounds, the above result applies to both closed and open $3$-manifolds, including those of infinite volume. This flexibility is possible because we restrict to manifolds of low dimension. In fact, a simple argument shows that higher rank $3$-manifolds have \emph{pointwise signed sectional curvatures}; i.e., for each $p \in M$ the sectional curvatures of $2$-planes in $T_pM$ are either all nonnegative or all nonpositive (see Proposition~\ref{prop:signed}). Nevertheless, no global structural results for manifolds with $\sec\geq 0$ or $\sec\leq 0$ apply, since the sign of the curvature may change with the point. 
The reader may consult \cite{kapo} for another instance in which flat surfaces in Riemannian $3$-manifolds are the source of rigidity in an otherwise curvature-free setting.

Historically, rank rigidity results first appeared in the context of manifolds with $\sec\leq0$. Ballmann~\cite{ba}, and independently Burns and Spatzier~\cite{busp}, proved that finite-volume manifolds with bounded nonpositive curvature and higher rank are either locally symmetric or have reducible universal covering. The lower bound on curvature was later removed by Eberlein and Heber~\cite{ebhe}, who also relaxed the finite-volume condition. Recently, Watkins~\cite{wa} proved a generalization of these results in the context of manifolds with no focal points. In contrast, fewer rank rigidity results are known for manifolds with lower sectional curvature bounds \cite{con,scshsp,scwo}, particularly $\sec\geq0$. A detailed investigation of rank rigidity in this curvature setting may have been stalled due to examples of Spatzier and Strake~\cite{spst} of $9$-dimensional manifolds with $\sec\geq0$ and higher rank that have irreducible universal covering and are not homotopy equivalent to a compact locally homogeneous space. It is worth mentioning that rank rigidity has also been extensively studied with different signs on sectional curvature bounds, with analogous definitions of \emph{spherical} and \emph{hyperbolic} rank, see \cite{co,esol,ha,shspwi}.

In order to understand the local structure of higher rank $3$-manifolds, we first analyze a \emph{pointwise} version of having higher rank. Following Schmidt and Wolfson~\cite{scwo}, we say that a Riemannian manifold $M$ has \emph{constant vector curvature zero}, abbreviated $\cvc(0)$, if every tangent vector to $M$ is contained in a flat plane. Higher rank manifolds clearly have $\cvc(0)$. Indeed, any $v\in T_pM$ is the initial velocity of the geodesic $\gamma_v(t)=\exp_p(tv)$, which, by the higher rank condition, admits a normal parallel Jacobi field $J(t)$ hence satisfying $\sec(\dot{\gamma}(t)\wedge J(t))=0$ for all $t\in\R$.
Notice that the $\cvc(0)$ hypothesis alone is a (strictly) weaker notion of having many flat planes, since there is no control on the global arrangement of these flat planes across the manifold, differently from the higher rank situation.

Nevertheless, $3$-manifolds with $\cvc(0)$ have a canonical decomposition as the union of the subset $\mathcal I$ of \emph{isotropic points}, at which all planes are flat, and its complement, the subset $\mathcal O$ of \emph{nonisotropic points}.
When $M$ has pointwise signed sectional curvatures, we have either $\sec\geq0$ or $\sec\leq0$ on each connected component $\mathcal C$ of $\mathcal{O}$. This imposes \emph{pointwise} rigidity on the geometric arrangement of flat planes, encoded as a singular tangent distribution on unit tangent spheres that we call \emph{flat planes distribution} (see Definition~\ref{def:flatplanesdistr}). The paucity of totally geodesic singular tangent distributions with codimension $\leq1$ on a round $2$-sphere (see Figure~\ref{fig:zeroplanesdistr}) is the source of such rigidity. In particular, it follows that $\mathcal C$ has a naturally defined line field $L$ such that a plane in $T_pM$, $p\in\mathcal C$, is flat if and only if it contains the line $L_p$, see Lemma~\ref{lemma:cvc}. From structural results in \cite{scwo}, $L$ is tangent to a foliation of $\mathcal C$ by complete geodesics, and $L$ is parallel if $\mathcal C$ has finite volume. Note that if $L$ is known to be parallel on $\mathcal C$, then the de Rham decomposition theorem provides a local product decomposition on $\mathcal C$.

Before outlining the strategy to prove our main result, let us mention a few examples of $3$-manifolds with $\cvc(0)$ that violate its conclusion. We start with examples of complete warped product metrics on $M\cong\R^3$ that have $\cvc(0)$ but are not isometric to a product metric, due to Sekigawa~\cite{se}. These examples are curvature homogeneous, with curvature model $\mathds{H}^2 \times\R$; in particular, they have $\sec\leq0$ and constant scalar curvature. In such examples, $M$ consists only of nonisotropic points and hence the line field $L$ is globally defined, but not parallel.

The classical rank $1$ graph manifolds of Gromov~\cite{gromov} provide examples of $3$-manifolds with $\cvc(0)$ and $\sec\leq0$. In these examples, the isotropic points separate the manifold in at least two components of nonisotropic points, where the line field $L$ is parallel. However, this line field is not the restriction of a globally defined parallel line field, and the local product structure given by the de Rham decomposition theorem cannot be globalized. This originates from the geometric arrangement of flat planes being locally compatible (within each nonisotropic component), but not globally compatible.
We observe that similar examples also exist among $\sec\geq0$. These can be constructed on both closed and open $3$-manifolds by gluing product manifolds with boundary, proving:

\begin{mainthm}\label{thm:B}
The sphere $S^3$, all lens spaces $L(p;q)\cong S^3/\Z_p$, and $\R^3$ admit complete Riemannian metrics with $\sec\geq0$, $\cvc(0)$, and irreducible universal covering.
\end{mainthm}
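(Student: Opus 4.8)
The plan is to realize each of the listed manifolds by isometrically gluing finitely many \emph{product building blocks} along totally geodesic flat boundary components, to read off $\cvc(0)$ from the local product structure, and then to deduce irreducibility either from compactness of the universal covering (the closed cases) or from the nonexistence of a global parallel line field (the case $\R^3$). The basic block is the following. Let $\Sigma$ be a complete surface with $\sec\ge 0$ and let $B=\Sigma\times F$ be its Riemannian product with a $1$-manifold $F\in\{\R,S^1,[0,\ell]\}$. Since the curvature tensor of $B$ is that of $\Sigma$, a plane $P\subset T_pB$ is flat precisely when its projection to $T_p\Sigma$ is degenerate; where the Gauss curvature $K_\Sigma$ is positive this happens exactly when $P$ contains the $F$-direction. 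Hence $B$ has $\sec\ge 0$ and $\cvc(0)$, every $v\in T_pB$ lying in the flat plane $\spn\{v,\partial_F\}$, its non-isotropic set is $\{K_\Sigma>0\}\times F$, and there the line field of Lemma~\ref{lemma:cvc} is $L=\R\,\partial_F$. I would take $\Sigma$ rotationally symmetric, positively curved on a compact cap and \emph{flat near its boundary}, so that $B$ is flat near $\partial B$ and can be glued isometrically to another block along a common flat boundary.

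For $S^3$ I would use the genus-one Heegaard splitting $S^3=(D^2\times S^1)\cup_{T^2}(S^1\times D^2)$, putting on each solid torus the product metric (capped disk) $\times$ (round circle), scaled so that each boundary is an isometric flat square torus. Gluing by the isometry interchanging the two circle factors (meridian $\leftrightarrow$ longitude), with a short flat collar $T^2\times[0,\epsilon]$ inserted if convenient, yields a smooth metric with $\sec\ge 0$ and $\cvc(0)$; the two caps are the non-isotropic regions, with mutually transverse line fields separated by a flat neck. Irreducibility is automatic: the universal covering is the compact $S^3$, whereas any nontrivial de Rham factor of a simply connected $3$-manifold produces a complete flat $\R$-factor and hence noncompactness, which is absurd. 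Moreover the construction is invariant under the $T^2$-action rotating the two circle directions, and the deck group $\Z_p\subset T^2$ of $L(p;q)\cong S^3/\Z_p$ is realized inside this torus; thus the metric descends to every lens space, again with compact, hence irreducible, universal covering.

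The case $M\cong\R^3$ is the only one in which irreducibility carries genuine content, since now the universal covering is $\R^3$ itself and could split. Here I would first isolate the rigidity statement that drives the argument: if $\widetilde M=N^2\times\R$ isometrically, with $\ell=\R\,\partial_{\R}$ the associated parallel line field, then $\ell=L$ on the non-isotropic set. Indeed, a non-isotropic point has $K_N>0$ (as $\sec\ge 0$, the only curvature being $K_N$), so by the block computation the flat planes there are exactly those containing $\ell$, while by Lemma~\ref{lemma:cvc} they are exactly those containing $L$; two pencils of planes through a line coincide only if the lines do, forcing $\ell=L$. Consequently a metric on $\R^3$ that is complete, non-flat, has $\sec\ge 0$ and $\cvc(0)$, and whose line field $L$ is \emph{not} parallel on a non-isotropic set admits no global parallel line field, so its de Rham decomposition has no flat factor and $\R^3$ is irreducible.

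The hard part is exhibiting such a metric, and this is where the main obstacle lies. A positively curved product bump cannot be supported in a compact subset of flat $\R^3$: by Gauss--Bonnet a flat-near-boundary disk is totally flat, so confining the positive curvature to a compact cross-section forces a parallel (product) $L$, while pushing it out along a cylindrical end would change the end topology away from that of $\R^3$. Thus the non-isotropic region must be noncompact with genuinely non-parallel $L$, and I would construct the metric as a curvature-homogeneous metric modeled on $\mathds H^2\times\R$ replaced by $S^2\times\R$ --- the nonnegatively curved counterpart of Sekigawa's examples discussed above --- or as a suitably twisted (non-product) metric interpolating between two transverse product directions. The remaining work is then to verify completeness, $\sec\ge 0$, $\cvc(0)$, and the non-parallelism of $L$ directly, after which the rigidity statement of the previous paragraph delivers irreducibility.
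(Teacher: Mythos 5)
Your treatment of the closed cases is correct and is essentially the paper's own proof of Corollary~\ref{cor:spherecvc0}: the genus-one Heegaard splitting into two solid tori carrying (nonnegatively curved, rotationally symmetric, flat-near-boundary disk)$\,\times\,$(circle) product metrics, square flat boundary tori glued by the factor-swapping isometry $\varphi_B$ of \eqref{eq:AandB}, irreducibility for free from compactness of the universal cover, and descent to $L(p;q)$ via $T^2$-invariance and the deck group generated by $(e^{2\pi i/p},e^{2\pi iq/p})$. Your rigidity observation for $\R^3$ --- that a splitting $N\times\R$ would force the parallel line $\R\,\partial_\R$ to coincide with $L$ on the nonisotropic set, so non-parallelism of $L$ kills reducibility --- is also sound; it is in effect the easy direction of Theorem~\ref{thm:A}, which is how the paper argues as well.

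The genuine gap is the $\R^3$ case: you never construct the metric, ending instead with two unverified proposals (``curvature-homogeneous modeled on $S^2\times\R$'' or ``a suitably twisted metric'') whose verification \emph{is} the content of the theorem. Moreover, the heuristic that steered you away from a gluing construction is wrong. Your claim ``by Gauss--Bonnet a flat-near-boundary disk is totally flat'' is false: for a disk with geodesic boundary, Gauss--Bonnet gives $\int_\Sigma K = 2\pi\chi(\Sigma)=2\pi$, not $0$; flatness is forced only for the cylinder $S^1\times[-1,1]$, with $\chi=0$ (this is precisely the paper's Remark~\ref{rem:complicated2}, and it is why $S^2\times S^1$ admits no such metric with $\sec\geq0$, while $\R^3$ does). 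Your resulting dichotomy --- compact bump impossible, cylindrical end incompatible with the topology of $\R^3$ --- overlooks the option the paper actually uses (Lemma~\ref{lemma:strangeR2} and Proposition~\ref{prop:r3}): smooth the double of the first quadrant to obtain a convex surface in $\R^3$ that is flat off a compact set, cut along a distant geodesic to get a quasi-positively curved metric $\h$ on the half-plane $\R^2_+$ that is a product near its totally geodesic flat boundary; then endow each closed half-space of $\R^3$ with $(\R^2_+,\h)\times\R$ and glue along the flat interface plane by the swap $(x,y)\mapsto(y,x)$. Here the positive curvature fills a \emph{noncompact tube} (compact cross-section times the $\R$-factor), perfectly compatible with the end of $\R^3$; it is the same two-block, factor-swapping scheme you already used for $S^3$, applied to half-spaces instead of solid tori, and the non-parallelism of $L$ (tangent to the $y$-axis on one side, the $x$-axis on the other) then triggers your own rigidity argument. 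Your fallback via a complete Sekigawa-type metric with $\sec\geq0$ is, by contrast, unsupported --- Sekigawa's examples are modeled on $\mathds H^2\times\R$, and the nonnegative-curvature rigidity you yourself invoked is evidence against a naive $S^2\times\R$ analogue.
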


Informed by the above examples, we must use the additional information on the \emph{global} arrangement of flat planes to prove that a higher rank $3$-manifold $M$ has reducible universal covering. The two main steps are to show that the line field $L$ is parallel on each connected component of $\mathcal{O}$ and extendable to a globally defined parallel line field on $M$.
In order to implement this strategy, the key observation is that, on a higher rank manifold $M$, the domains $U_p=M \setminus \cut(p)$, $p\in\mathcal O$, admit a canonical \emph{open book decomposition} with totally geodesic flat pages and binding $\exp_p(L_p)$, see Figure~\ref{figure:flats}. Concretely, this decomposition is obtained by exponentiating the linear open book decomposition of $T_pM$ given by the union of $2$-planes that contain the line $L_p$, see Proposition~\ref{prop:flats}.

To achieve the first step, the open book decomposition is used to show that $L^\perp$ is a totally geodesic distribution (see Corollary~\ref{corollary:perptotgeo}). This, combined with an evolution equation for the shape operators of $L^{\perp}$ along $\exp_p(L_p)$, implies that $L$ is parallel in each nonisotropic component (see Proposition~\ref{prop:totgeoandparallel}).

To achieve the second step, note that the geodesic $\exp_p(L_p)$ generates a parallel line foliation of each flat page of the open book decomposition. The union of these parallel line foliations over pages extends $L$ across isotropic points to a parallel line field on $U_p$ (see Proposition~\ref{prop:keyprop2}).

As illustrated by the above mentioned examples, the higher rank hypothesis in Theorem~\ref{thm:A} cannot be relaxed to $\cvc(0)$, even under the additional assumption of pointwise signed sectional curvatures. We remark that the examples in Theorem~\ref{thm:B} use smooth metrics which are \emph{not real-analytic}, while the examples of Sekigawa~\cite{se} are real-analytic, but have \emph{infinite volume}. Our final result shows that if $M$ is a $3$-manifold with $\cvc(0)$ and pointwise signed sectional curvatures on which both of these behaviors are avoided, then the universal covering of $M$ is reducible.

\begin{mainthm}\label{thm:C}
Let $M$ be a complete $3$-dimensional real-analytic Riemannian manifold with finite volume and pointwise signed sectional curvatures. If $M$ has $\cvc(0)$, then its universal covering splits isometrically as $\widetilde M=N\times \R$.
\end{mainthm}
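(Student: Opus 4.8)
The plan is to combine the structural results already available on the nonisotropic set with the real-analyticity of the metric, using the latter to bypass the gluing obstruction exhibited by the graph-manifold examples. Assume $M$ is connected and let $M=\mathcal I\cup\mathcal O$ be its decomposition into isotropic and nonisotropic points. If $\mathcal O=\emptyset$ then $M$ is flat and $\widetilde M=\R^3=N\times\R$, so we may assume $\mathcal O\neq\emptyset$. Since the curvature tensor is real-analytic and the vanishing of all sectional curvatures at a point is a real-analytic condition, $\mathcal I$ is a proper real-analytic subset; in particular $\mathcal O$ is open and dense, and we may freely choose a base point in it.

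First I would fix a connected component $\mathcal C$ of $\mathcal O$ and a point $p\in\mathcal C$. Because $M$ has finite volume, so does the open set $\mathcal C\subseteq M$, and hence the structural results of Schmidt--Wolfson \cite{scwo} recalled above guarantee that the flat-planes line field $L$ is parallel on $\mathcal C$. Let $T$ denote the orthogonal projection onto $L^\perp$, a real-analytic $(1,1)$-tensor field with $\nabla T=0$ on $\mathcal C$. A parallel tensor commutes with the curvature operator and with all of its covariant derivatives: starting from the identity $R(X,Y)\cdot T=0$ and differentiating repeatedly while using $\nabla T=0$ yields $(\nabla^k R)\cdot T=0$ for every $k\geq 0$, so each endomorphism $(\nabla^k R)_p$ commutes with $T_p$. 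As $T_p$ is the projection with eigenspaces $L_p$ and $L_p^{\perp}$, every such endomorphism preserves the line $L_p$.

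The crux of the argument---and the step where the analyticity hypothesis is essential---is to promote this infinitesimal invariance at the single point $p$ to a genuine global parallel line field. Here I would invoke the classical fact that, for a real-analytic connection on a connected manifold, the restricted holonomy Lie algebra at $p$ is spanned by the curvature endomorphisms $(\nabla^k R)_p$, $k\geq 0$ (see Kobayashi--Nomizu, \emph{Foundations of Differential Geometry} I, Ch.~II). Combined with the previous paragraph, this shows that the restricted holonomy group of $\widetilde M$ preserves the lift of $L_p$. Since $\widetilde M$ is simply connected, parallel transport of $L_p$ is then path-independent and defines a global parallel line field $\widehat L$ on $\widetilde M$. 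This is precisely the point at which the graph-manifold examples fail: without analyticity the holonomy of $\widetilde M$ is not generated by the curvature at $p$ alone, and the parallel line fields on different nonisotropic components need not be compatible.

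Finally, $\widehat L$ and its orthogonal complement $\widehat L^{\perp}$ are complementary parallel distributions on the complete, simply connected manifold $\widetilde M$, so the de Rham decomposition theorem yields an isometric splitting $\widetilde M=\R\times N$ with the $\R$-factor tangent to $\widehat L$. I expect the main obstacle to be the global extension across the isotropic set carried out in the third paragraph; the preceding reduction isolates it as a statement about holonomy, and the two hypotheses of the theorem then enter in exactly complementary ways---finite volume to make $L$ parallel (which fails for Sekigawa's infinite-volume examples) and real-analyticity to globalize it (which fails for the smooth graph-manifold examples).
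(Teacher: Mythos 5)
Your proof is correct, and its first step coincides with the paper's: finite volume of $M$ (hence of any nonisotropic component $\mathcal C$) combined with the Schmidt--Wolfson structure results (Theorem~\ref{thm:summary}) makes the line field $L$ parallel on $\mathcal C$. Where you genuinely diverge is the globalization across the isotropic set. The paper takes the local parallel field $V$ spanning $L$ on a small nonisotropic ball, extends it to a global Killing field on the simply connected real-analytic manifold $\widetilde M$ via Nomizu's extension theorem, and then verifies \emph{a posteriori} that this Killing field is parallel by analytically continuing $\nabla_{W(t)}V$ along curves. You instead convert the problem into a holonomy statement: from $\nabla T=0$ and the Ricci identity one gets $[R(X,Y),T]=0$ on $\mathcal C$, and since $\nabla T=0$ the inductive differentiation $\nabla\big((\nabla^{k}R)\cdot T\big)=(\nabla^{k+1}R)\cdot T$ is valid, so all $(\nabla^k R)_p$ commute with $T_p$ and preserve $L_p=\ker T_p$; the Nijenhuis/Kobayashi--Nomizu theorem that for an analytic connection the infinitesimal holonomy algebra (spanned by the $(\nabla^k R)_p$) equals the restricted holonomy algebra (Kobayashi--Nomizu, Vol.~I, Ch.~II, Thm.~10.8) then shows that $\operatorname{Hol}(\widetilde M)=\operatorname{Hol}^0(M)$ preserves $L_p$, and since holonomy acts by isometries it preserves $L_p^\perp$ as well, so de Rham applies directly. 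Both routes consume the hypotheses identically --- finite volume for parallelism on $\mathcal C$, analyticity for globalization --- and yours trades Nomizu's theorem plus the continuation argument for a single, slightly heavier, holonomy theorem; it is arguably cleaner in that holonomy-invariance of the line immediately yields a parallel distribution with no separate verification step, whereas the paper's argument is more elementary in its inputs. Two minor remarks: the density of $\mathcal O$ established in your first paragraph is never used (only $\mathcal O\neq\emptyset$ is needed), and your closing diagnosis of why the graph-manifold examples escape the argument --- smooth holonomy is not generated by curvature data at one point --- is exactly right.
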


We conclude by remarking that, despite the above examples and rigidity results, there are no obvious topological obstructions for $3$-manifolds to admit a metric with $\cvc(0)$ and pointwise signed sectional curvatures. This is in sharp contrast with the stronger notion of $3$-manifolds with higher rank.

This paper is organized as follows. In Section~\ref{sec:prelim}, we establish basic notation and recall some well-known facts of Riemannian geometry. The structure of $3$-manifolds with $\cvc(0)$ and pointwise signed sectional curvatures is discussed in Section~\ref{sec:cvc0}. Section~\ref{sec:structure} contains our structural results on $3$-manifolds with higher rank. The proof of Theorem~\ref{thm:A} is given in Section~\ref{sec:proofA}. Examples of manifolds with $\cvc(0)$ without higher rank, including those mentioned in Theorem~\ref{thm:B}, are constructed in Section~\ref{sec:graphmflds}. Finally, the proof of Theorem~\ref{thm:C} is given in Section~\ref{sec:proofC}.

\subsection*{Acknowledgements}
We would like to thank the anonymous referee for the careful reading of our paper and thoughtful comments and suggestions.

\section{Preliminaries}\label{sec:prelim}

In this section, we recall some facts of Riemannian geometry and state a few preliminary results. Let $(M,\g)$ be a smooth connected $n$-dimensional Riemannian manifold. Its Levi-Civita connection $\nabla$ is determined by the Koszul formula:
\begin{equation}\label{eq:koszul}
\g(\nabla_{X} Y,Z)=\tfrac{1}{2}\big(\g([X,Y],Z)-\g([Y,Z],X)+\g([Z,X],Y)\big),
\end{equation} 
where $X$, $Y$ and $Z$ are orthonormal vector fields. Our sign convention for the curvature tensor is such that
\begin{equation}\label{eq:R}
R(X,Y)Z=\nabla_{X}\nabla_{Y} Z-\nabla_{Y}\nabla_{X} Z-\nabla_{[X,Y]} Z.
\end{equation}
For convenience, we also write $R(X,Y,Z,W):=\g(R(X,Y)Z,W)$. The sectional curvature of a $2$-plane spanned by orthonormal vectors $v,w\in T_pM$ is given by
\begin{equation*}
\sec(v \wedge w)=R(v,w,w,v).
\end{equation*}
We denote by $SM$ the unit sphere bundle of $M$, whose fiber $S_pM$ over $p \in M$ is the unit sphere of $T_pM$.

\subsection{Jacobi operator}
Given $v \in S_pM$, define the \emph{Jacobi operator} $\hat{\mathcal J}_v\colon T_pM \to T_pM$, $\hat{\mathcal J}_v(w)=R(w,v)v$. From the symmetries of the curvature tensor, it follows that $\hat{\mathcal J}_v$ is self-adjoint and $\hat{\mathcal J}_v(v)=0$. In particular, $\hat{\mathcal J}_v$ restricts to a self-adjoint operator $\mathcal J_v\colon v^{\perp} \to v^{\perp}$, $\mathcal J_v(w)=R(w,v)v$, on the subspace $v^{\perp}$ of vectors orthogonal to $v$.
Elementary arguments yield the following well-known fact:

\begin{lemma}\label{lemma:crit}
Assume that $p \in M$ is such that either $\sec_p\geq0$ or $\sec_p\leq0$, and let $v,w \in S_pM$ be orthonormal vectors. Then $w\in\ker\mathcal J_v$ if and only if $\sec(v\wedge w)=0$.
\end{lemma}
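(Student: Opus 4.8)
The plan is to rewrite the sectional curvature as the diagonal quadratic form associated with the self-adjoint operator $\mathcal J_v$, and then to exploit the sign hypothesis through the spectral theorem. First I would record the identity
\begin{equation*}
\sec(v\wedge w)=R(v,w,w,v)=R(w,v,v,w)=\g(\mathcal J_v(w),w),
\end{equation*}
which follows from the pair-symmetry $R(X,Y,Z,W)=R(Z,W,X,Y)$ of the curvature tensor together with the definition $\mathcal J_v(w)=R(w,v)v$. Thus the sectional curvatures of planes containing $v$ are precisely the values of the quadratic form $u\mapsto\g(\mathcal J_v(u),u)$ on unit vectors $u\in v^\perp$.

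With this identity in hand, the forward implication is immediate and requires no curvature hypothesis: if $w\in\ker\mathcal J_v$, then $\sec(v\wedge w)=\g(\mathcal J_v(w),w)=\g(0,w)=0$.

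For the converse, the sign assumption enters decisively. Suppose $\sec_p\geq0$ (the case $\sec_p\leq0$ is identical after replacing $\mathcal J_v$ by $-\mathcal J_v$). For any nonzero $u\in v^\perp$, homogeneity gives $\g(\mathcal J_v(u),u)=\sec\!\big(v\wedge\tfrac{u}{|u|}\big)\,|u|^2\geq0$, so $\mathcal J_v$ is a positive semidefinite self-adjoint endomorphism of $v^\perp$. I would then diagonalize $\mathcal J_v$ in an orthonormal eigenbasis $e_1,\dots,e_{n-1}$ of $v^\perp$ with eigenvalues $\lambda_1,\dots,\lambda_{n-1}\geq0$. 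Writing $w=\sum_i w_i e_i$, the hypothesis $\sec(v\wedge w)=0$ becomes $\sum_i\lambda_i w_i^2=0$; since every term is nonnegative, each $\lambda_i w_i^2=0$, whence $\lambda_i w_i=0$ for all $i$, and therefore $\mathcal J_v(w)=\sum_i\lambda_i w_i e_i=0$.

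I do not anticipate a genuine obstacle here: the argument reduces to the standard fact that a null vector of a semidefinite quadratic form lies in the kernel of the associated operator. The only point worth emphasizing is that the sign hypothesis is essential—without it $\mathcal J_v$ may be indefinite, and a vector $w$ with $\g(\mathcal J_v(w),w)=0$ need not satisfy $\mathcal J_v(w)=0$—so the equivalence genuinely detects flat planes only at points of signed curvature.
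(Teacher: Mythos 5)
Your proof is correct and is precisely the standard elementary argument the paper alludes to: the paper states this lemma without proof (``elementary arguments yield the following well-known fact''), and your reduction --- identifying $\sec(v\wedge w)$ with the quadratic form $\g(\mathcal J_v(w),w)$ of the self-adjoint operator $\mathcal J_v$, which is semidefinite under the sign hypothesis, so a null vector of the form lies in its kernel --- is exactly the intended reasoning. Your closing remark that the sign hypothesis is essential is also apt, since for indefinite $\mathcal J_v$ a null vector of the quadratic form need not be in the kernel.
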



\subsection{Jacobi fields}
Recall that a vector field $J(t)$ along a geodesic $\gamma(t)$ is a \emph{normal Jacobi field} along $\gamma$ if it is perpendicular to $\dot{\gamma}(t)$ and satisfies the \emph{Jacobi equation}
$J''+\mathcal J_{\dot{\gamma}}(J)=0$.
Normal Jacobi fields along $\gamma$ are infinitesimal variations of $\gamma$ by other geodesics with the same speed. As a solution of an ODE, any normal Jacobi field is determined by its initial conditions $J(0),J'(0) \in \dot{\gamma}(0)^{\perp}$.

\begin{convention}
Given $v\in S_{p}M$, denote by $\gamma_v(t):=\exp_p(tv)$ the geodesic with $\gamma_v(0)=p$ and $\dot\gamma_v(0)=v$. All geodesics are assumed parametrized by arc length.
\end{convention}

Given $v\in S_pM$ and $w \in v^{\perp}$, the map $\alpha(s,t)=\exp_p(t(v+sw))$ defines a variation of $\gamma_v(t)=\alpha(0,t)$ by geodesics, with variational field $J(t)=\frac{\partial}{\partial s}\alpha(s,t)\big|_{s=0}$ given by
\begin{equation}\label{jabfield}
J(t)=\dd(\exp_p)_{tv}(tw),
\end{equation}
which is a normal Jacobi field along $\gamma_v$ with initial conditions $J(0)=0$ and $J'(0)=w$. Conversely, all normal Jacobi fields $J(t)$ along $\gamma_v$ with $J(0)=0$ are of this form.


\subsection{Cut locus}
Henceforth, assume $(M,\g)$ is complete, and let $\dist\colon M\times M\to \R$ be its distance function. Geodesics in $M$ locally minimize distance, hence $\dist(p,\gamma_v(t))=t$ for each $v \in S_pM$ and $t>0$ sufficiently small. The \emph{cut time} $\mu(v)\in (0,+\infty]$ of $v \in S_pM$ is the maximal time for which $\gamma_v$ is minimizing:
\begin{equation}\label{eq:cut}
\mu(v):=\sup\{t>0 : \dist(p,\gamma_v(t))=t\}.
\end{equation}
If $\mu(v)< \infty$, the point $\gamma_v(\mu(v))$ is called the \emph{cut point} to $p$ along $\gamma_v$. Let $\cut(p)\subset M$ be the \emph{cut locus} of $p\in M$, defined as the set of cut points to $p$ along some $\gamma_v$ with $v\in S_pM$. It is well-known that if $q \in \cut(p)$, then either $q$ is the first conjugate point to $p$ along some geodesic, or there are at least two minimizing geodesics joining $p$ to $q$. In particular, $q \in \cut(p)$ if and only if $p \in \cut(q)$. It is also well-known that $\cut(p)$ is a closed subset of measure zero in $M$. We denote its complement by
\begin{equation}\label{eq:injball}
U_p:=M \setminus \cut(p).
\end{equation}
Using the above properties of $\cut(p)$, it is easy to prove the following:

\begin{lemma}\label{lemma:cover}
If $\mathcal O \subset M$ is an open subset, then $M=\bigcup_{p \in \mathcal O} U_p.$
\end{lemma}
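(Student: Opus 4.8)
The plan is to establish the nontrivial inclusion $M \subseteq \bigcup_{p \in \mathcal O} U_p$ by a pointwise argument, exploiting the symmetry property recalled above, namely $q \in \cut(p)$ if and only if $p \in \cut(q)$, together with the fact that each cut locus is closed and of measure zero. So I would fix an arbitrary $q \in M$ and aim to produce a single point $p \in \mathcal O$ with $q \in U_p$, i.e. $q \notin \cut(p)$.

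First I would reformulate this goal using the symmetry of the cut locus: the condition $q \notin \cut(p)$ is equivalent to $p \notin \cut(q)$, that is, to $p \in U_q = M \setminus \cut(q)$. Hence it suffices to find a point $p$ lying simultaneously in $\mathcal O$ and in $U_q$; in other words, to show that $\mathcal O \cap U_q \neq \emptyset$.

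Next I would observe that $U_q$ is open and dense in $M$. Openness is immediate because $\cut(q)$ is closed; density follows because $\cut(q)$ has measure zero, hence empty interior, so its complement is dense. Since $\mathcal O$ is a nonempty open set and $U_q$ is dense, the intersection $\mathcal O \cap U_q$ is nonempty. Choosing any $p$ in this intersection gives $p \in \mathcal O$ together with $q \in U_p$, so that $q \in \bigcup_{p \in \mathcal O} U_p$. As $q$ was arbitrary, this proves $M \subseteq \bigcup_{p \in \mathcal O} U_p$, and the reverse inclusion is trivial since each $U_p \subseteq M$.

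I expect no serious obstacle here: the argument is purely topological, and the only point deserving care is the density of $U_q$, which rests entirely on the recalled facts that $\cut(q)$ is closed and of measure zero (equivalently, closed with empty interior). I would only note in passing that $\mathcal O$ must be taken nonempty for the statement to be meaningful.
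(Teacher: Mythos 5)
Your proof is correct and is exactly the argument the paper intends: the lemma is stated right after the facts that $\cut(q)$ is closed, has measure zero, and satisfies the symmetry $q \in \cut(p) \Leftrightarrow p \in \cut(q)$, and the paper leaves the proof to the reader precisely because it amounts to your observation that $U_q$ is open and dense, hence meets the nonempty open set $\mathcal O$. Your side remark that $\mathcal O$ must be nonempty is a fair point (and is automatic in the paper's application, where $\mathcal O$ is a nonempty set of nonisotropic points).
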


 
%

\subsection{Tangent distributions}
A tangent distribution $\mathcal D$ on a manifold $M$ is an assignment of a linear subspace $\mathcal D_p\subset T_pM$ for each $p\in M$. If $\dim\mathcal D_p$ is constant, then $\mathcal D$ is said to be \emph{regular}; otherwise, $\mathcal D$ is said to be \emph{singular}. The tangent distribution $\mathcal D$ is \emph{smooth} on an open subset $U$ of $M$ if there exist smooth vector fields $X_i$ on $U$ such that $\mathcal D_p=\spn\{X_i(p)\}$ for all $p\in U$.
If a vector field $X$ satisfies $X(p)\in \mathcal D_p$ for all $p\in M$, then we write $X\in\mathcal D$.
A tangent distribution $\mathcal{D}$ on $M$ is \emph{integrable} if there exists a submanifold $\mathcal F_p$ through each $p\in M$ such that $\mathcal D_p=T_p\mathcal F_p$. If $\mathcal D$ is a smooth regular tangent distribution, then the Frobenius Theorem states that $\mathcal D$ is integrable if and only if $\mathcal D$ is \emph{involutive}, i.e., $[X,Y]\in\mathcal D$ whenever $X,Y\in\mathcal D$. In this case, the partition of $M$ by the submanifolds $\mathcal F_p$ is called a \emph{regular foliation} of $M$. However, this characterization is no longer true for singular tangent distributions and singular foliations.\footnote{For \emph{smooth} singular tangent distributions, integrability to a singular foliation is equivalent to a condition strictly stronger than involutivity, requiring also completeness of the vector fields that span the distribution, see Sussmann~\cite{sus}.}

A (possibly singular) tangent distribution $\mathcal D$ on a Riemannian manifold is \emph{totally geodesic} if geodesics that are somewhere tangent to $\mathcal D$ remain everywhere tangent to $\mathcal D$. It is not hard to see that $\mathcal D$ is totally geodesic if and only if
\begin{equation}\label{eq:totgeodD1}
\g(\nabla_Z Z,\eta)=-\g(\nabla_Z \eta,Z)=0\quad \mbox{ for all } Z\in\mathcal D,\, \eta\in\mathcal D^\perp.
\end{equation}
Writing $Z=X+Y$, it follows that the above is, in turn, equivalent to
\begin{equation}\label{eq:totgeodD}
\g(\nabla_{X} Y+\nabla_{Y} X,\eta)=0\quad \mbox{ for all } X,Y\in\mathcal D,\, \eta\in\mathcal D^\perp.
\end{equation}
Notice that when $\mathcal D^\perp$ is integrable, $\mathcal D$ is totally geodesic if and only if $\mathcal D^\perp$ is tangent to a so-called \emph{(singular) Riemannian foliation}.

In the particular case in which $\mathcal D$ is regular and integrable, condition \eqref{eq:totgeodD} is clearly equivalent to the vanishing of the \emph{second fundamental form}
\begin{equation}\label{eq:secondff}
\sff\colon\mathcal D\times\mathcal D\to\mathcal D^\perp, \quad \sff(X,Y)=(\nabla_X Y)^\perp,
\end{equation}
where we denote by $Z=Z^\top+Z^\perp$ the components of a vector $Z$ tangent to $\mathcal D$ and $\mathcal D^\perp$ respectively. Indeed, \eqref{eq:totgeodD} corresponds to $\g(\sff(X,Y),\eta)$ being skew-symmetric; and, if $\mathcal D$ is integrable, this bilinear form is also symmetric by the Frobenius Theorem and hence vanishes. Nevertheless,  \eqref{eq:secondff} does not necessarily vanish when $\mathcal D$ is a general totally geodesic tangent distribution.

For each normal direction $\eta\in\mathcal D^\perp$ consider the \emph{shape operator}
\begin{equation}\label{eq:shapeop}
S^\eta\colon\mathcal D\to\mathcal D, \quad S^\eta(X)=-(\nabla_X \eta)^\top.
\end{equation}
Provided $\mathcal D$ is regular and integrable, $S^\eta$ is a self-adjoint operator which represents the bilinear form $\g(\sff(X,Y),\eta)$. In particular, such a distribution $\mathcal D$ is totally geodesic if and only if $S^\eta$ vanishes for all $\eta\in\mathcal D^\perp$.
Again, \eqref{eq:shapeop} does not necessarily vanish when $\mathcal D$ is a general totally geodesic tangent distribution.

\section{\texorpdfstring{Structure of $3$-manifolds with $\cvc(0)$ and signed curvatures}{Structure of 3-manifolds with cvc(0) and signed curvatures}}\label{sec:cvc0}

Let $M$ be a $3$-manifold with $\cvc(0)$ and pointwise signed sectional curvatures. In other words, we assume that for each $p \in M$, either $\sec_p \geq 0$ or $\sec_p \leq 0$; and, for all $v\in S_pM$, there exists $w\in S_pM$ such that $\sec_p(v\wedge w)=0$. In this section, we study the local structure of such a manifold $M$, as a preliminary step towards understanding the global structure of higher rank manifolds in the next section.

\subsection{Flat planes distribution}
Since $M$ has $\cvc(0)$, the subspace $\ker\mathcal J_v$ of $v^\perp$ is nontrivial for all $v\in S_pM$, see Lemma~\ref{lemma:crit}. This suggests analyzing the geometry of the collection of subspaces $\ker\mathcal J_v$ parametrized by $v\in S_pM$, cf.\ Chi~\cite{chi}.

\begin{convention}
For each $v\in S_pM$, parallel translation in $T_pM$ defines a canonical isomorphism between the subspace $T_v (S_pM)$ of $T_v (T_pM)$ and the subspace $v^{\perp}$ of $T_pM$. This identification will be made without mention when unambiguous.
\end{convention}

\begin{definition}\label{def:flatplanesdistr}
The \emph{flat planes distribution} $\mathcal Z$ is the tangent distribution on the unit tangent sphere $S_pM$ given by the association 
\begin{equation}\label{eq:flatplanesdistr}
v\mapsto\mathcal Z_v:=\ker\mathcal J_v.
\end{equation}
In other words, $\mathcal Z_v$ consists of all the $w$'s orthogonal to $v$ for which $v\wedge w$ is a zero curvature $2$-plane. We denote by $\textgoth{Z}_p$ its \emph{regular set}, which is the open subset of $S_pM$ formed by those $v$'s for which $\dim \mathcal Z_v=1$.
\end{definition}

\begin{remark}\label{rem:chi}
For each $p\in M$, the restriction of the flat planes distribution to $\textgoth Z_p$ is a smooth distribution. This follows from the fact that the Jacobi operator $\mathcal J_v$ varies smoothly with $v$ and has constant rank in $\textgoth Z_p$, hence its kernel also varies smoothly with $v$, cf. \cite[Lemma 1]{chi}. 
\end{remark}

\begin{convention}
Henceforth, unit tangent spheres $S_pM$ are assumed to be endowed with the standard (round) Riemannian metric induced by the metric $\g_p$ on $T_pM$. To avoid confusion, geodesics in $S_pM$ are typically denoted by $c$, while geodesics in $M$ are typically denoted by $\gamma$.
\end{convention}

\begin{lemma}\label{lemma:totgeo}
The flat planes distribution \eqref{eq:flatplanesdistr} is totally geodesic. 
\end{lemma}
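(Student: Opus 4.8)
The plan is to exploit the fact that a geodesic of the round sphere $S_pM$ is a great circle, along which the $2$-plane it sweeps out in $T_pM$ stays constant. First I would fix $p\in M$ and recall that, since $M$ has pointwise signed sectional curvatures, either $\sec_p\geq0$ or $\sec_p\leq0$; hence Lemma~\ref{lemma:crit} applies at $p$, giving that for orthonormal $v,w\in S_pM$ one has $w\in\ker\mathcal J_v=\mathcal Z_v$ if and only if $\sec(v\wedge w)=0$.

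Next, let $c(s)$ be a geodesic of $S_pM$ that is tangent to $\mathcal Z$ at some parameter $s_0$, meaning $\dot c(s_0)\in\mathcal Z_{c(s_0)}$ under the parallel-translation identification $T_v(S_pM)\cong v^\perp$. Writing $c(s)=\cos(s)\,v_0+\sin(s)\,w_0$ for an orthonormal pair $v_0,w_0$ spanning a $2$-plane $\Pi\subset T_pM$, the velocity is $\dot c(s)=-\sin(s)\,v_0+\cos(s)\,w_0$, which is a unit vector orthogonal to $c(s)$. The crucial observation is that the pair $\{c(s),\dot c(s)\}$ is an orthonormal basis of the \emph{same} plane $\Pi$ for every $s$, so that $\sec(c(s)\wedge\dot c(s))=\sec(\Pi)$ is independent of $s$.

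The conclusion then follows at once. By Lemma~\ref{lemma:crit}, the tangency $\dot c(s_0)\in\mathcal Z_{c(s_0)}$ is equivalent to $\sec(\Pi)=\sec(c(s_0)\wedge\dot c(s_0))=0$; since $\sec(\Pi)$ does not depend on $s$, this forces $\sec(c(s)\wedge\dot c(s))=0$, and hence $\dot c(s)\in\mathcal Z_{c(s)}$, for every $s$. Thus $c$ is everywhere tangent to $\mathcal Z$, which is precisely the definition of $\mathcal Z$ being totally geodesic. I would emphasize that this argument is insensitive to whether $c(s)$ lies in the regular set $\textgoth Z_p$ or is an isotropic direction with $\dim\mathcal Z_{c(s)}=2$, because in both cases tangency is detected by the single scalar condition $\sec(\Pi)=0$.

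The one point requiring care—rather than a genuine obstacle—is the bookkeeping of the identification $T_v(S_pM)\cong v^\perp$ and the verification that the position and velocity vectors of a great circle jointly span a fixed plane in $T_pM$; once this is in place, the constancy of $\sec(\Pi)$ along $c$ does all the work, and the signed-curvature hypothesis enters only through Lemma~\ref{lemma:crit}, to translate membership in $\ker\mathcal J_v$ into the vanishing of a sectional curvature.
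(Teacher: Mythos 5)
Your proof is correct and follows essentially the same route as the paper: both parametrize the great circle explicitly as $c(s)=\cos(s)\,v+\sin(s)\,w$ and both reduce tangency to Lemma~\ref{lemma:crit}, the only difference being that you track the constancy of the scalar $\sec(\Pi)$ along the fixed plane, while the paper expands $\mathcal J_{c(t)}(\dot c(t))=-\sin(t)\mathcal J_w(v)+\cos(t)\mathcal J_v(w)$ by bilinearity and uses Lemma~\ref{lemma:crit} once to get $\mathcal J_w(v)=0$. Your observation that the argument is insensitive to whether $c(s)$ passes through isotropic directions is also implicit in the paper's proof, since Lemma~\ref{lemma:crit} holds regardless of $\dim\ker\mathcal J_v$.
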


\begin{proof}
Let $v \in S_pM$ and $w \in \mathcal Z_v$. The geodesic $c(t)=\cos(t) v+\sin(t) w$ on $S_pM$ satisfies $c(0)=v$ and $\dot{c}(0)=w$. It suffices to show that $\dot{c}(t)\in \mathcal Z_{c(t)}$ for all $t \in \R$. A direct calculation gives $\mathcal J_{c(t)}(\dot{c}(t))=-\sin(t)\mathcal J_w(v)+\cos(t)\mathcal J_v(w)$. Since $w\in \mathcal Z_v$, we have $\mathcal J_v(w)=0$. By Lemma~\ref{lemma:crit}, $\mathcal J_w(v)=0$. Therefore, $\mathcal J_{c(t)}(\dot{c}(t))=0$, concluding the proof.
\end{proof}

The possible flat planes distributions in our particular framework of $3$-manifolds with $\cvc(0)$ are easily classified. Namely, there are only two possible totally geodesic tangent distributions $\mathcal D$ on a round $2$-sphere such that $\dim\mathcal D_v\geq1$ for all $v\in S^2$, see Figure~\ref{fig:zeroplanesdistr}.
\begin{enumerate}
\item\label{item:1} Suppose there exist $v_1,v_2\in S^2$, with $v_1\neq\pm v_2$ and $\dim\mathcal D_{v_i}=2$ for $i=1,2$. Consider $v_3 \in S^2$ that does not lie in the great circle $C_{12}$ through $\{v_1,v_2\}$. Then $\mathcal D_{v_3}=T_{v_3} S^2$, since the two lines at $v_3$ tangent to the great circles $C_{13}$ and $C_{23}$ through $\{v_1,v_3\}$ and $\{v_2,v_3\}$, respectively, are transverse. Since no $v\in S^2$ lies simultaneously  in all three great circles $C_{12}$, $C_{13}$ and $C_{23}$, repeating the above argument it follows that $\mathcal D_v=T_v S^2$ for all $v\in S^2$.

\item\label{item:2} Suppose there are no $v_1,v_2\in S^2$ as above. Since any two great circles in $S^2$ intersect, there exists $\xi \in S^2$ with $\dim \mathcal D_{\xi}=2$. As $\mathcal D$ is totally geodesic, it follows that also $\dim \mathcal D_{-\xi}=2$. Each geodesic joining $\xi$ to $-\xi$ is tangent to $\mathcal D$. Thus, the tangent fields to these geodesics span $\mathcal D$ on $S^2 \setminus \{\pm \xi\}$.
\end{enumerate}

\begin{figure}[htf]
\centering
\includegraphics[scale=.6]{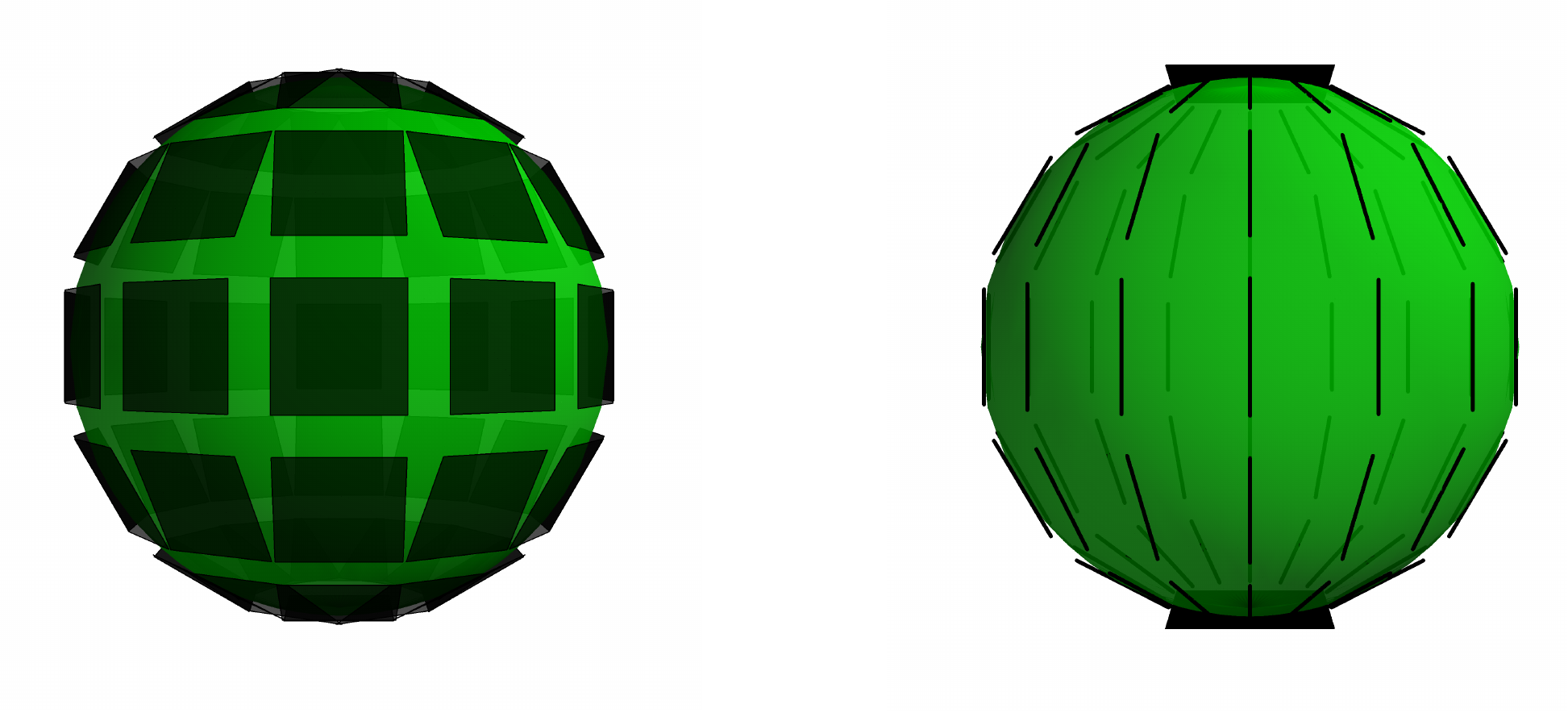}
\begin{pgfpicture}
\pgfputat{\pgfxy(-9.4,0.2)}{\pgfbox[center,center]{(1)}}
\pgfputat{\pgfxy(-2.5,0.2)}{\pgfbox[center,center]{(2)}}
\end{pgfpicture}
\caption{The two possible flat planes distributions on $S_pM$.}
\label{fig:zeroplanesdistr}
\end{figure}

\begin{definition}
A point $p \in M$ is called \emph{isotropic} if all $2$-planes in $T_pM$ have the same sectional curvature.\footnote{Note that, under the assumption that $M$ has $\cvc(0)$, \emph{isotropic} points are the points at which all tangent planes are flat.} Otherwise, $p$ is called a \emph{nonisotropic point}. We denote by $\mathcal{I}$ the closed subset of isotropic points in $M$, and by $\mathcal{O}=M \setminus \mathcal{I}$ the open subset of nonisotropic points.
\end{definition}

The above discussion, together with Lemma \ref{lemma:totgeo}, implies that the flat planes distribution at $p\in M$ is of the form \eqref{item:1} or \eqref{item:2} above, according to $p$ being an isotropic or nonisotropic point, respectively. More precisely, we have proved:

\begin{lemma}\label{lemma:cvc}
Let $M$ be a $3$-manifold with $\cvc(0)$ and pointwise signed sectional curvatures. Given $p \in M$, consider the flat planes distribution $\mathcal Z$ on $S_pM$.
\begin{enumerate}
\item\label{item:cvc0-1} The following are equivalent:
\begin{enumerate}
\item[(i)] $p \in \mathcal{I}$ is an isotropic point;
\item[(ii)] There exist $v_1,v_2\in S_pM \setminus \textgoth{Z}_p$, with $v_1\neq\pm v_2$;
\item[(iii)] $\mathcal Z=T S_pM$ is the tangent bundle of $S_pM$.
\end{enumerate}
\item\label{item:cvc0-2} If $p\in \mathcal{O}$ is a nonisotropic point, there exists a $1$-dimensional subspace $L_p$ of $T_pM$ such that $S_pM \setminus \textgoth Z_p=S_pM \cap L_p$. The tangent lines to great circles containing the singular set $S_pM \cap L_p$ define $\mathcal Z$ on the regular set $\textgoth Z_p$.
\end{enumerate}
\end{lemma}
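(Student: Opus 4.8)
The plan is to deduce both assertions directly from the classification of totally geodesic distributions $\mathcal{D}$ on the round $2$-sphere with $\dim\mathcal{D}_v\geq 1$, recorded in cases~\eqref{item:1} and~\eqref{item:2} above, which I take as given. To apply it, I first check that $\mathcal{Z}$ satisfies both hypotheses: it is totally geodesic by Lemma~\ref{lemma:totgeo}, and the $\cvc(0)$ assumption combined with Lemma~\ref{lemma:crit} gives $\mathcal{Z}_v=\ker\mathcal{J}_v\neq\{0\}$ for every $v\in S_pM$, so $\dim\mathcal{Z}_v\geq 1$. Since $M$ is three-dimensional, $v^{\perp}=T_v(S_pM)$ is two-dimensional, whence each $v$ satisfies either $\dim\mathcal{Z}_v=1$, i.e.\ $v\in\textgoth{Z}_p$, or $\mathcal{Z}_v=v^{\perp}=T_v(S_pM)$, i.e.\ $v\in S_pM\setminus\textgoth{Z}_p$. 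The key dictionary, furnished again by Lemma~\ref{lemma:crit}, is that the second alternative holds exactly when every $2$-plane containing $v$ is flat.

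For part~\eqref{item:cvc0-1}, I would establish the cycle (i)$\Rightarrow$(iii)$\Rightarrow$(ii)$\Rightarrow$(iii) together with (iii)$\Rightarrow$(i). The equivalence (i)$\Leftrightarrow$(iii) is immediate from the dictionary: under $\cvc(0)$, isotropy of $p$ means every tangent plane at $p$ is flat, which by Lemma~\ref{lemma:crit} is the statement $\dim\mathcal{Z}_v=2$ for all $v$, that is, $\mathcal{Z}=TS_pM$. The implication (iii)$\Rightarrow$(ii) is trivial, since then every $v$ lies in $S_pM\setminus\textgoth{Z}_p$. The remaining implication (ii)$\Rightarrow$(iii) is precisely case~\eqref{item:1}: two singular directions $v_1\neq\pm v_2$ force $\mathcal{Z}_v=T_v(S_pM)$ at every $v$.

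For part~\eqref{item:cvc0-2}, suppose $p\in\mathcal{O}$. By part~\eqref{item:cvc0-1}, condition (ii) fails, so $\mathcal{Z}$ admits no two distinct, non-antipodal singular directions; hence it is not of type~\eqref{item:1} and must be of type~\eqref{item:2}. As described there, its singular set is a single antipodal pair $\{\pm\xi\}$, which I record as $S_pM\setminus\textgoth{Z}_p=S_pM\cap L_p$ with $L_p:=\R\xi$. Since $\mathcal{Z}_{\pm\xi}=T_{\pm\xi}(S_pM)$ is the full tangent space, every great circle through $\xi$ and $-\xi$ is tangent to $\mathcal{Z}$ at those two points, hence everywhere by total geodesy; as each regular $v$ lies on a unique such great circle, $\mathcal{Z}_v$ is its tangent line, giving the asserted description on $\textgoth{Z}_p$.

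Because the dichotomy of cases~\eqref{item:1} and~\eqref{item:2} is assumed, the proof reduces to this translation, whose only non-formal ingredient is Lemma~\ref{lemma:crit}. The subtlest point lies inside the dichotomy itself, in the nonisotropic case: one must know the singular set is nonempty rather than empty, which amounts to the impossibility of a globally regular line field on $S^2$ (equivalently, the great-circle intersection argument used to produce $\xi$ in case~\eqref{item:2}). I expect this to be the main obstacle were one to prove the classification from scratch; granted it, the lemma is essentially bookkeeping.
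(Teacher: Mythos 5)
Your proof is correct and takes essentially the same route as the paper: the paper derives the lemma precisely by combining Lemma~\ref{lemma:totgeo} with the dichotomy of cases~\eqref{item:1} and~\eqref{item:2} for totally geodesic distributions on the round $2$-sphere, which is exactly the translation you carry out (the paper even states ``we have proved'' after that discussion). Your extra bookkeeping --- the Lemma~\ref{lemma:crit} dictionary between singular points of $\mathcal Z$ and flatness of all planes through $v$, and the observation that in case~\eqref{item:2} the singular set is exactly the antipodal pair $\{\pm\xi\}$ --- only makes explicit what the paper leaves implicit, and you correctly identify the one non-formal ingredient (the great-circle intersection argument producing $\xi$).
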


\subsection{Nonisotropic components}
As before, let $M$ be a complete $3$-manifold with $\cvc(0)$ and pointwise signed sectional curvatures. Let $\mathcal {C}\subset\mathcal{O}$ be a (nonempty) connected component of nonisotropic points on $M$. From Lemma~\ref{lemma:cvc}, we have a well-defined line field $L$ on $\mathcal C$. The following summarizes structural results of Schmidt and Wolfson~\cite[Thm 1.3, Cor 2.10]{scwo} concerning this line field.

\begin{theorem}\label{thm:summary}
The line field $L$ on any connected component $\mathcal{C}$ of nonisotropic points on $M$ is smooth and tangent to a foliation of $\mathcal{C}$ by complete geodesics. Furthermore, if $\mathcal C$ has finite volume, then the line field $L$ is parallel on $\mathcal C$.
\end{theorem}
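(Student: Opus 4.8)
The plan is to establish the two assertions of Theorem~\ref{thm:summary} separately, as they rely on fundamentally different mechanisms: the smoothness and geodesic foliation property is local, while the parallelism under finite volume is a global (dynamical) statement.

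First I would address smoothness and the geodesic foliation. By Lemma~\ref{lemma:cvc}\eqref{item:cvc0-2}, at each $p\in\mathcal{C}$ the nonisotropic condition gives a well-defined line $L_p\subset T_pM$ characterized intrinsically as the unique line contained in every flat plane at $p$; equivalently, $L_p$ is the singular set $S_pM\cap L_p=S_pM\setminus\textgoth{Z}_p$ of the flat planes distribution. The plan is to show this line varies smoothly with $p$. The natural approach is to characterize $L_p$ as an eigenspace of a smoothly varying operator: on each connected component $\mathcal{C}$ the sign of the curvature is constant (either $\sec\geq0$ or $\sec\leq0$ throughout $\mathcal{C}$), so by Lemma~\ref{lemma:crit} the flat directions in $v^\perp$ are exactly $\ker\mathcal{J}_v$. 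One can then single out $L_p$ using the Ricci tensor $\Ric_p$: since $L_p$ is the common kernel direction shared by the Jacobi operators $\mathcal{J}_v$ as $v$ ranges over $L_p^\perp\cap S_pM$, it is distinguished as an extremal eigenspace of $\Ric_p$, which depends smoothly (indeed continuously, with a simple eigenvalue by nonisotropy) on $p$. Smoothness of $L$ then follows from smooth dependence of simple eigenspaces on the operator. For the geodesic foliation property, having established smoothness, I would integrate $L$ locally to a curve and show it is a geodesic: the key point is that the integral curves of $L$ must be geodesics because a nongeodesic integral curve would, via the second variation or the flat-plane structure, force additional flat planes and contradict nonisotropy. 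Completeness of these geodesics follows from completeness of $M$.

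Second I would prove parallelism of $L$ on $\mathcal{C}$ when $\mathcal{C}$ has finite volume. This is the main obstacle and the genuinely global step. The plan is to exploit the geodesic flow: parallelism of the unit line field $L$ is equivalent to $\nabla_X X=0$ for a local unit section $X$ of $L$ together with $\nabla_Y X\perp X$-type conditions, but more efficiently it is equivalent to showing the shape operator of the orthogonal distribution $L^\perp$ vanishes. Since the integral curves of $L$ are geodesics, $\nabla_X X=0$ automatically; the remaining content is that $L$ is also parallel in directions transverse to itself. Here finite volume enters through a recurrence/Poincar\'e-type argument: one sets up an evolution (Riccati) equation for the relevant shape operator along the complete geodesics tangent to $L$, observes that the corresponding geometric quantity is monotone or has controlled growth, and uses finiteness of volume to force it to be constant, hence zero. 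This is where I expect the real work to lie — translating finite volume into a rigidity statement via ergodic-theoretic recurrence of the geodesic flow on the unit tangent bundle of $\mathcal{C}$, in the spirit of classical rank rigidity arguments.

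I should note that, since Theorem~\ref{thm:summary} is explicitly attributed to Schmidt and Wolfson~\cite[Thm 1.3, Cor 2.10]{scwo}, the honest proof is simply a citation: the statement is quoted here as a packaging of established results rather than reproved. My sketch above indicates the structure one would follow to reconstruct it from scratch, with the finite-volume parallelism being the deep ingredient and the smoothness/geodesic-foliation being comparatively elementary consequences of the pointwise classification in Lemma~\ref{lemma:cvc}.
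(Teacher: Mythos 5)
Your final paragraph is exactly right, and it matches the paper: the paper offers no proof of Theorem~\ref{thm:summary}, which is stated explicitly as a summary of structural results of Schmidt and Wolfson \cite[Thm 1.3, Cor 2.10]{scwo}, so the citation \emph{is} the proof. Your reconstruction sketch is broadly sound --- in particular the smoothness argument works: at a nonisotropic point, Lemma~\ref{lemma:crit} gives $\mathcal J_{e_1}\equiv 0$ for $e_1\in L_p$ and $\mathcal J_{e_i}(e_1)=0$ for $e_i\perp L_p$, from which one checks $\Ric_p$ is diagonalized with $L_p=\ker\Ric_p$ a simple eigenspace, so $L$ depends smoothly on $p$. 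Two caveats, though. First, your claim that ``completeness of these geodesics follows from completeness of $M$'' understates the content: the theorem asserts a foliation \emph{of $\mathcal C$} by complete geodesics, so one must show a geodesic tangent to $L$ never exits $\mathcal C$ into an isotropic point --- completeness of $M$ alone does not give this, and it is a genuine ingredient of the Schmidt--Wolfson argument that the present paper relies on later (e.g., $\gamma_\xi(\R)\subset\mathcal O$ in the proofs of Proposition~\ref{prop:keyprop} and Lemma~\ref{lemma:patch}). Second, the finite-volume step in \cite{scwo} runs through the specific evolution equation $\tfrac{\dd}{\dd t}\tr S(t)=(\tr S(t))^2-2\det S(t)$ for the shape operators of $L^\perp$ along the $L$-geodesics (this is the equation quoted in the paper's proof of Proposition~\ref{prop:totgeoandparallel}), with finite volume forcing $\tr S\equiv 0$ via an integral/blow-up argument; your generic appeal to Poincar\'e recurrence of the geodesic flow is in the right spirit but is not the mechanism actually used.
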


The finite volume assumption in the last statement can be omitted if further geometric information on the tangent distribution $L^\perp$ is available, as follows.

\begin{proposition}\label{prop:totgeoandparallel}
Let $M$ be a $3$-manifold as above, with possibly infinite volume. If the tangent distribution $L^\perp$ is totally geodesic on $\mathcal C$, then $L$ is parallel on $\mathcal C$.
\end{proposition}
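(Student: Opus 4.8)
The plan is to exploit the interplay between the two pieces of geometric information we are given: first, that $L$ is tangent to a foliation of $\mathcal C$ by complete geodesics (Theorem~\ref{thm:summary}), and second, that $L^\perp$ is totally geodesic (the hypothesis of the proposition). Since we are on a $3$-manifold, $L$ is a line field and $L^\perp$ is a $2$-plane field, so this is a codimension-one situation. Locally, choose a unit vector field $T$ spanning $L$, so that the integral curves of $T$ are unit-speed geodesics; thus $\nabla_T T = 0$. Proving that $L$ is parallel amounts to showing $\nabla_X T \in L^\perp$ has vanishing $L^\perp$-component in the appropriate sense, i.e.\ that $\nabla_X T$ is proportional to $T$ for all $X$; but since $T$ is a unit field, $\g(\nabla_X T, T) = 0$ always, so $\nabla_X T \in L^\perp$ automatically, and parallelism of the \emph{line} field $L$ reduces to showing $\nabla_X T = 0$ for all $X \in L^\perp$.

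First I would record what each hypothesis gives in terms of the Levi-Civita connection. The geodesic foliation condition gives $\nabla_T T = 0$. That $L^\perp$ is totally geodesic means, by \eqref{eq:totgeodD1}, that $\g(\nabla_Z Z, T) = 0$ for all $Z \in L^\perp$; polarizing as in \eqref{eq:totgeodD}, $\g(\nabla_X Y + \nabla_Y X, T) = 0$ for all $X, Y \in L^\perp$. The content that remains to be extracted is the \emph{skew-symmetric} part: we must control the $2$-form $(X,Y) \mapsto \g(\nabla_X Y - \nabla_Y X, T) = \g([X,Y], T)$ on $L^\perp$, together with the shape operator $S^T(X) = -(\nabla_X T)^\top = -\nabla_X T$ for $X \in L^\perp$. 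The symmetric part of $\g(\nabla_X T, Y)$ is governed by the totally geodesic hypothesis via $\g(\nabla_X T, Y) + \g(\nabla_Y T, X) = -\g(\nabla_X Y + \nabla_Y X, T) = 0$, so $S^T$ (extended to $L^\perp$) is \emph{skew-symmetric}; hence on the $2$-dimensional space $L^\perp$ it is a rotation generator, of the form $\left(\begin{smallmatrix} 0 & -\omega \\ \omega & 0\end{smallmatrix}\right)$ for some function $\omega$. The goal is therefore to prove $\omega \equiv 0$ on $\mathcal C$.

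The key step, and where I expect the main work to lie, is to derive an evolution equation for $\omega$ (equivalently, for the shape operator of $L^\perp$) along the geodesics generating $L$, and to show this forces $\omega = 0$. This is precisely the mechanism the introduction advertises (``an evolution equation for the shape operators of $L^\perp$ along $\exp_p(L_p)$''). Concretely, I would differentiate $S^T$ in the direction $T$ and use a Riccati-type identity relating $\nabla_T S^T$ to $(S^T)^2$ and the curvature term $\mathcal J_T$ restricted to $L^\perp$. The crucial input from the $\cvc(0)$ structure, via Lemma~\ref{lemma:cvc}\eqref{item:cvc0-2}, is that every plane containing $L_p$ is flat: the line $L_p$ is exactly the singular direction of the flat planes distribution, so $\g(\mathcal J_T Y, Y) = \sec(T \wedge Y) = 0$ for every $Y \in L^\perp$, meaning the curvature term $\mathcal J_T|_{L^\perp}$ vanishes identically. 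The Riccati equation then collapses to $\nabla_T S^T + (S^T)^2 = 0$, and because $S^T$ is skew on a $2$-plane we have $(S^T)^2 = -\omega^2 \Id$, yielding a scalar ODE $\omega' = \omega^2$ (up to sign and normalization) along each complete geodesic integral curve of $L$.

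Finally, I would run the completeness argument to conclude. The scalar Riccati equation $\omega' = \omega^2$ has solutions $\omega(t) = -1/(t - t_0)$ that blow up in finite time unless $\omega \equiv 0$; since the integral curves of $L$ are \emph{complete} geodesics (Theorem~\ref{thm:summary}) on which $\omega$ stays finite and smooth for all $t \in \R$, the only possibility is $\omega \equiv 0$. Thus $S^T$ vanishes, so $\nabla_X T = 0$ for all $X \in L^\perp$, and combined with $\nabla_T T = 0$ this shows $T$ is parallel along every direction, hence $L$ is a parallel line field on $\mathcal C$. The main obstacle is carrying out the Riccati computation cleanly, in particular checking that the skew-symmetry of $S^T$ is genuinely preserved under $\nabla_T$ and that the curvature contribution really vanishes on all of $L^\perp$ rather than just on $L_p$ itself; this is exactly the place where the global arrangement of flat planes encoded by the line field $L$ does the essential work, and where one must use that $\mathcal C$ consists entirely of nonisotropic points so that $L$ is genuinely a line field and $\mathcal J_T$ has rank one with kernel $L$.
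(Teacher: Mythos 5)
Your setup coincides with the paper's: the shape operator $S^T(X)=-\nabla_X T$ of $L^\perp$, its skew-symmetry extracted from the totally geodesic hypothesis via \eqref{eq:totgeodD1} and \eqref{eq:totgeodD}, the vanishing of $\mathcal J_T|_{L^\perp}$ from the $\cvc(0)$ structure (the paper packages the Riccati mechanism as the trace evolution $\tfrac{\dd}{\dd t}\tr S=(\tr S)^2-2\det S$, citing \cite{scwo}). But your final step contains a genuine error. Write $S^T=\omega J$ in a parallel orthonormal frame $\{e_1,e_2\}$ of $L^\perp$ along an integral geodesic $\gamma$ of $L$ (such a frame exists because $\nabla_T T=0$, so $L^\perp$ is parallel along $\gamma$), where $J$ is rotation by $\pi/2$. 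Since the totally geodesic hypothesis holds at \emph{every} point of $\mathcal C$ and $\gamma$ stays in $\mathcal C$, the operator $S^T(t)$ is skew for all $t$, hence $\nabla_T S^T=\omega' J$ is skew; meanwhile $(S^T)^2=-\omega^2\Id$ is symmetric. Your Riccati identity $\nabla_T S^T=(S^T)^2$ therefore equates a skew matrix with a diagonal one and decouples entrywise into $\omega'=0$ and $\omega^2=0$. There is no scalar ODE $\omega'=\omega^2$ --- you produced it by matching the off-diagonal derivative against the diagonal quadratic term --- so the blow-up-versus-completeness argument built on it is unsupported as written.

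The fix is immediate and is precisely the paper's argument: take the trace of your own displayed equation. Since $\tr S^T\equiv 0$ on $\mathcal C$, also $(\tr S^T)'=0$ along $\gamma$, while $\tr\big((S^T)^2\big)=(\tr S^T)^2-2\det S^T=-2\omega^2$; hence $\omega^2=0$ pointwise and $S^T$ vanishes. In particular the vanishing of $S^T$ is a pointwise algebraic consequence of skew-symmetry plus the flat curvature term, and completeness of the leaves plays no role at this step (it enters earlier, in Theorem~\ref{thm:summary}, to produce the geodesic foliation). With this correction your argument coincides with the paper's proof.
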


\begin{proof}
Let $B\subset \mathcal{C}$ be a small metric ball, and let $V$ be a unit vector field such that $L_p=\spn\{V_p\}$ for all $p\in B$. It suffices to prove that $\nabla V=0$ on $B$, since any curve contained in $\mathcal{C}$ can be covered by finitely many such balls.
By Theorem~\ref{thm:summary}, $\nabla_V V=0$ on $B$.
Consider $\hat S_p\colon T_pM \to T_pM$, $\hat S_p(w)=-\nabla_{w} V$. Since $V$ has constant length one, $\hat S_p$ restricts to an operator 
\begin{equation}
S_p\colon L_p^{\perp} \to L_p^{\perp}, \quad S_p(w)=-\nabla_w V,
\end{equation}
on the subspace $L_p^\perp$ of vectors orthogonal to $V_p$, cf.\ \eqref{eq:shapeop}. We now use that $L^\perp$ is totally geodesic to show that $S_p$ must also vanish.\footnote{Notice that this is not immediate, since $L^\perp$ is \emph{a priori} not necessarily integrable.}
Let $\gamma(t)$ be an orbit of the flow generated by $V$, with $\gamma(0)=p$. According to \cite[Thm 2.9]{scwo}, the operators $S(t)=S_{\gamma(t)}\colon L_{\gamma(t)}^\perp\to L_{\gamma(t)}^\perp$ satisfy the evolution equation
\begin{equation*}
\tfrac{\dd}{\dd t} \tr S(t)=(\tr S(t))^2-2 \det S(t).
\end{equation*}
Let $\{e_1,e_2\}$ be an orthonormal $2$-frame along $\gamma(t)$ that spans $L^{\perp}_{\gamma(t)}$. It follows from \eqref{eq:totgeodD1} that $\tr S(t)=\g(S(t)(e_1),e_1)+\g(S(t)(e_2),e_2)=0$, for all $t$. Thus, using \eqref{eq:totgeodD},
\begin{equation*}
0=\det S(t)=-\g(S(t)(e_1),e_2)\g(S(t)(e_2),e_1)=\g(S(t)(e_1),e_2)^2=\g(S(t)(e_2),e_1)^2.
\end{equation*}
By the above, $S(t)$ vanishes identically, concluding the proof.
\end{proof}

\section{\texorpdfstring{Structure of $3$-manifolds with higher rank}{Structure of 3-manifolds with higher rank}}\label{sec:structure}

Let $(M,\g)$ be a complete $3$-manifold. For each $p \in M$ and $v \in S_pM$, denote by
\begin{equation}\label{eq:paralleltransl}
P_t\colon T_pM \to T_{\gamma_v(t)} M
\end{equation}
the linear isomorphisms defined by parallel translation along $\gamma_v(t)$, and define
\begin{equation}\label{eq:eucldistr}
\mathcal R_v:=\big\{w \in v^{\perp} : P_t(w) \text{ is a parallel Jacobi field along } \gamma_v(t) \big\}.
\end{equation}
The \emph{rank} of $v\in S_pM$ is defined to be $\dim \mathcal R_v+1$. Similarly, the \emph{rank} of a line $L$ in $T_pM$ is defined as the rank of a unit vector tangent to $L$, and the \emph{rank} of a geodesic is the (common) rank of its unit tangent vectors. 
The manifold $(M,\g)$ is said to have \emph{higher rank} if $\dim \mathcal R_v\geq1$ for all $v\in S_pM$ and all $p\in M$.
Throughout the remainder of this section, $(M,\g)$ will denote a complete higher rank $3$-manifold.  

\subsection{Curvature}
We begin by analyzing the curvature of higher rank $3$-manifolds.

\begin{proposition}\label{prop:signed}
A $3$-manifold $(M,\g)$ with higher rank has pointwise signed sectional curvatures; i.e., for all $p\in M$ either $\sec_p\geq0$ or $\sec_p\leq0$.
\end{proposition}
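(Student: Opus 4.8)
The plan is to reduce the statement to a linear-algebra fact about the Jacobi operators $\mathcal J_v$ at a single point $p$, exploiting that $v^\perp$ is only two-dimensional. The whole point is to convert the higher rank hypothesis into the statement that $0$ is an eigenvalue of $\mathcal J_v$ for every $v$, and then to read off the sign of $\sec_p$ from the Ricci tensor.

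First I would extract the infinitesimal content of the higher rank hypothesis. If $0\neq w\in\mathcal R_v$, then $P_t(w)$ is a parallel Jacobi field along $\gamma_v$; being parallel it has vanishing second covariant derivative, so the Jacobi equation forces $\mathcal J_{\dot\gamma_v(t)}(P_t w)=0$ for all $t$, and at $t=0$ this reads $\mathcal J_v(w)=0$. Thus higher rank yields $\ker\mathcal J_v\neq 0$, i.e.\ $0$ is an eigenvalue of $\mathcal J_v$, for every $v\in S_pM$. I emphasize that this is genuinely stronger than merely $\cvc(0)$: absent a sign on $\sec_p$, a flat plane through $v$ need \emph{not} place the corresponding $w$ in $\ker\mathcal J_v$, so one cannot invoke Lemma~\ref{lemma:crit} here.

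Second, since $\dim v^\perp=2$ and $\mathcal J_v$ is self-adjoint with $\tr\mathcal J_v=\Ric(v,v)$, the two eigenvalues of $\mathcal J_v$ are exactly $0$ and $\Ric(v,v)$. Consequently the sectional curvatures of all $2$-planes containing $v$, being the values of the quadratic form $\g(\mathcal J_v\,\cdot\,,\cdot\,)$ on the unit circle of $v^\perp$, lie between $\min\{0,\Ric(v,v)\}$ and $\max\{0,\Ric(v,v)\}$. In particular, $\Ric(v,v)>0$ whenever some plane through $v$ has positive curvature, and $\Ric(v,v)<0$ whenever some plane through $v$ has negative curvature.

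Third, I would argue by contradiction. If $\sec_p$ is not signed, there are $2$-planes $\sigma_+,\sigma_-\subset T_pM$ with $\sec(\sigma_+)>0>\sec(\sigma_-)$. Since $\sigma_+$ itself is a positively curved plane through each of its unit vectors, the previous step shows $\Ric(\,\cdot\,,\cdot\,)$ is positive definite on $\sigma_+$; symmetrically it is negative definite on $\sigma_-$. But $\sigma_+\neq\sigma_-$ are distinct $2$-planes in the $3$-dimensional space $T_pM$, so they meet in a line: any nonzero $u\in\sigma_+\cap\sigma_-$ would satisfy $\Ric(u,u)>0$ and $\Ric(u,u)<0$ at once, a contradiction. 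Hence $\sec_p$ is signed. The only real content is the first step, upgrading higher rank to the kernel condition $\ker\mathcal J_v\neq 0$; after that the nonzero eigenvalue of $\mathcal J_v$ is pinned to $\Ric(v,v)$ and the conclusion is forced by the scarcity of room in three dimensions. The main thing to verify carefully is precisely that I never secretly invoke Lemma~\ref{lemma:crit}, whose proof already presupposes a sign on the curvature.
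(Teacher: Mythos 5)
Your proof is correct, and it takes a genuinely different route from the paper's. Your first step is sound and is, in fact, the same move the paper makes: a parallel Jacobi field $P_t(w)$ has $J''=0$, so the Jacobi equation forces $\mathcal J_{\dot\gamma_v(t)}(P_t(w))=0$ directly, with no appeal to Lemma~\ref{lemma:crit} and hence no circularity; your steps two and three are also airtight, since $\hat{\mathcal J}_v(v)=0$ gives $\tr \mathcal J_v=\Ric(v,v)$ on the $2$-dimensional space $v^\perp$, pinning the spectrum to $\{0,\Ric(v,v)\}$, and two distinct $2$-planes in a $3$-dimensional tangent space always meet in a line. Where you diverge is everything after the kernel condition: the paper chooses an orthonormal frame $\{v_i\}$ diagonalizing the Ricci tensor, invokes the $3$-dimensional Kulkarni--Nomizu decomposition of the curvature operator to conclude that $\{v_i\wedge v_j\}$ diagonalizes $R$, writes $\sec(\sigma)=c_1^2\sec(v_2\wedge v_3)+c_2^2\sec(v_3\wedge v_1)+c_3^2\sec(v_1\wedge v_2)$ for an arbitrary plane, and then applies higher rank along a \emph{single} well-chosen direction $v_1$ to force at least two of the three diagonal curvatures to vanish, after which the sign statement is immediate. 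You instead use the kernel condition at \emph{every} unit vector, convert it via the trace into a sign link between $\Ric(v,v)$ and any non-flat plane through $v$, and finish by intersecting planes. The paper's computation buys an explicit pointwise normal form (in the Ricci eigenframe at most one $\sec(v_i\wedge v_j)$ is nonzero), which is the local picture behind its subsequent $\cvc(0)$ analysis; your argument buys a more elementary proof of a formally stronger statement, namely that the purely pointwise hypothesis $\ker\mathcal J_v\neq\{0\}$ for all $v\in S_pM$ --- strictly weaker than higher rank, as the graph-manifold examples of Section~\ref{sec:graphmflds} show --- already forces $\sec_p$ to be signed, with no curvature-operator decomposition needed.
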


\begin{proof}
Let $\{v_i\}$ be a local orthonormal frame near $p \in M$ that diagonalizes the Ricci tensor. Since $\dim M=3$, the curvature operator\footnote{According to \eqref{eq:R}, the curvature operator is defined by $\langle R(X\wedge Y),Z\wedge W\rangle=R(X,Y,W,Z)$.} $R\colon\wedge^2 T_pM\to\wedge^2 T_pM$ decomposes as $R=(\Ric-\frac{\scal}{4}\g)\owedge\g$, where $\owedge$ denotes the Kulkarni-Nomizu product. In particular, $R$ is diagonalized by the orthonormal basis $\{v_i\wedge v_j\}$, and hence $\mathcal{J}_{v_i}(v_j)=R(v_j,v_i)v_i=\sec(v_i \wedge v_j)v_j$ for each $i,j \in \{1,2,3\}$. Let $\sigma\subset T_pM$ be a $2$-plane orthogonal to the unit vector $\sum_{i=1}^{3} c_i v_i\in T_pM$. Then $\sigma=*\textstyle\sum_i c_i v_i=\textstyle\sum_i c_i (v_{i+1}\wedge v_{i+2})$, where indices are modulo $3$. Therefore (cf.\ \cite[Lemma 2.2]{scwo}),
\begin{equation*}
\sec(\sigma)=\langle R(\sigma),\sigma\rangle=c_1^2 \sec(v_2 \wedge v_3)+c_2^2\sec(v_3 \wedge v_1)+c_3^2\sec(v_1 \wedge v_2).
\end{equation*}
To prove the statement, it suffices to show that at least two of the three sectional curvatures $\sec(v_i\wedge v_j)$ vanish. If this were not the case, after possibly reindexing, we would have that $\sec(v_1 \wedge v_2)$ and $\sec(v_1 \wedge v_3)$ are both nonzero. Since $M$ has higher rank, there exists a unit vector $w\in T_pM$ perpendicular to $v_1$ that is the initial condition for a normal parallel Jacobi field along the geodesic $\gamma_{v_1}$. It follows from the Jacobi equation that $\mathcal{J}_{v_1}(w)=R(w,v_1)v_1=0$. Thus,
\begin{equation*}
0=\mathcal{J}_{v_1}(w)=\g(w,v_2)\sec(v_1\wedge v_2)\,v_2+\g(w,v_3)\sec(v_1 \wedge v_3)\,v_3.
\end{equation*}
As both $\sec(v_1 \wedge v_2)$ and $\sec(v_1 \wedge v_3)$ are nonzero, this implies that $\g(w,v_i)=0$ for $i=2,3$, hence $w=0$, a contradiction.
\end{proof}

\subsection{Rank distribution}
As $3$-manifolds with higher rank have pointwise signed sectional curvatures, it follows from Lemma~\ref{lemma:crit} that the space $\mathcal R_v$ defined in \eqref{eq:eucldistr} coincides with the space of $w\in v^\perp$ such that $\sec\big(\dot{\gamma_v}(t) \wedge P_t(w)\big)=0$ for all $t\in\R$. Moreover, the discussion of $3$-manifolds with $\cvc(0)$ and signed curvatures in Section~\ref{sec:cvc0} automatically applies to $3$-manifolds of higher rank. In particular, $\mathcal R_v$ is a linear subspace of $\mathcal Z_v$, see Definition~\ref{def:flatplanesdistr}. This motivates the following:

\begin{definition}\label{def:rankdistr}
The \emph{rank distribution} $\mathcal R$ is the tangent distribution on $S_pM$ given by the association $v \mapsto\mathcal R_v$, see \eqref{eq:eucldistr}. We denote by $\textgoth R_p$ its regular set, which is the open subset of $S_pM$ formed by rank $2$ vectors.
\end{definition}

\begin{lemma}\label{lemma:cont}
The restriction of  $\mathcal R$ to $\textgoth{R}_p$ is continuous for each $p \in M$.
\end{lemma}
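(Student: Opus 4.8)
The plan is to reduce the statement to the continuity of a single explicit auxiliary function on $S_pM$ and then argue by extracting convergent subsequences of unit vectors. First I would reformulate membership in $\mathcal R_v$ in a manner that is pointwise in $t$. Since $P_t(w)$ is parallel along $\gamma_v$ by construction, its covariant derivatives vanish, so the Jacobi equation $J''+\mathcal J_{\dot\gamma_v}(J)=0$ reduces to the algebraic condition $\mathcal J_{\dot\gamma_v(t)}(P_t(w))=0$. Hence, writing $P_t=P_t^v$ to emphasize the dependence on $v$,
\begin{equation*}
\mathcal R_v=\big\{w\in v^\perp : \mathcal J_{\dot\gamma_v(t)}(P_t^v(w))=0 \text{ for all } t\in\R\big\}.
\end{equation*}
I would then introduce the map
\begin{equation*}
F\colon S_pM\times T_pM\times\R\to T_pM, \qquad F(v,w,t)=(P_t^v)^{-1}\big(\mathcal J_{\dot\gamma_v(t)}(P_t^v(w))\big),
\end{equation*}
which transports the result back to the fixed vector space $T_pM$, so that convergence statements are unambiguous. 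By completeness, $\gamma_v(t)$ is defined for all $t$, and the geodesic flow $v\mapsto\gamma_v$, the parallel transport $(v,t)\mapsto P_t^v$, and the curvature tensor all depend smoothly on their arguments; thus $F$ is continuous (indeed smooth), and $w\in\mathcal R_v$ if and only if $w\in v^\perp$ and $F(v,w,t)=0$ for all $t\in\R$.

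Next I would establish continuity at a fixed $v\in\textgoth{R}_p$ by a subsequence argument. Let $v_n\to v$ in $\textgoth{R}_p$. Since $\dim\mathcal R_{v_n}=1$, choose unit vectors $w_n\in\mathcal R_{v_n}$. By compactness of the unit sphere of $T_pM$, pass to a subsequence along which $w_n\to w$ for some unit vector $w$; from $\g(w_n,v_n)=0$ and $v_n\to v$ one gets $\g(w,v)=0$, so $w\in v^\perp$. For each fixed $t$ we have $F(v_n,w_n,t)=0$, and letting $n\to\infty$, continuity of $F$ gives $F(v,w,t)=0$. As $t$ was arbitrary, $w\in\mathcal R_v$. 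Here the hypothesis $v\in\textgoth{R}_p$ enters decisively: since $\dim\mathcal R_v=1$, the unit vector $w$ must span $\mathcal R_v$, so $\mathcal R_{v_n}=\spn\{w_n\}\to\spn\{w\}=\mathcal R_v$. Because every subsequence of $(v_n)$ admits a further subsequence along which $\mathcal R_{v_n}\to\mathcal R_v$, the whole sequence of lines converges to $\mathcal R_v$, which is the desired continuity of the line field $v\mapsto\mathcal R_v$ on $\textgoth{R}_p$.

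The only genuine subtlety, and the step I expect to require the most care, is the passage to the limit forcing the subsequential limit line to equal $\mathcal R_v$: the argument shows only that $w\in\mathcal R_v$, so if $\dim\mathcal R_v$ were $2$ (that is, off the regular set, where $v$ has rank $3$), the limiting line would not be determined and continuity could genuinely fail. Restricting to $\textgoth{R}_p$, where $\dim\mathcal R_v=1$ by definition, is precisely what pins down the limit. The remaining ingredients—smooth dependence of $\gamma_v$ and $P_t^v$ on $v$, guaranteed by completeness, and the resulting continuity of $F$—are standard and I would only state them rather than verify them in detail.
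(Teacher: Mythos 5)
Your proof is correct and takes essentially the same route as the paper's: the paper likewise picks unit vectors $w_i\in\mathcal R_{v_i}$, extracts an accumulation point $w$, concludes $w\in\mathcal R_v$ ``by continuity of sectional curvatures and parallel translation,'' and uses $\dim\mathcal R_v=1$ on $\textgoth{R}_p$ to force $\mathcal R_{v_i}=\spn\{w_i\}\to\spn\{w\}=\mathcal R_v$. Your reformulation of membership in $\mathcal R_v$ as the algebraic condition $\mathcal J_{\dot\gamma_v(t)}\big(P_t^v(w)\big)=0$ and the auxiliary map $F$ simply make the paper's one-line continuity claim explicit, and your subsequence-of-subsequences step spells out the convergence of the whole sequence of lines that the paper leaves implicit.
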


\begin{proof}
Let $v_i \in \textgoth{R}_p$ be a sequence converging to $v \in \textgoth{R}_p$.  Assume $w \in T_v (S_pM)$ is an accumulation point of a sequence of unit vectors $w_i \in\mathcal R_{v_i}$. Then $w \in\mathcal R_v$, by continuity of sectional curvatures and parallel translation. Since $v$ has rank $2$, $\mathcal R_{v}=\spn \{w\}$, and hence the lines $\mathcal R_{v_i}=\spn\{w_i\}$ converge to $\mathcal R_{v}=\spn\{w\}$.
\end{proof}

The facts proved for the flat planes distribution in Section~\ref{sec:cvc0} yield the following.

\begin{lemma}\label{lemma:oneline}
If there are two or more lines in $T_pM$ of rank $3$, then $p$ is isotropic.
\end{lemma}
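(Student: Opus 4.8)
The plan is to reduce the statement to the already-established classification of flat planes distributions in Lemma~\ref{lemma:cvc}, after translating the rank hypothesis into the language of the kernels $\mathcal Z_v=\ker\mathcal J_v$.

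First I would unpack what a line of rank $3$ means. Since $M$ has higher rank, Proposition~\ref{prop:signed} gives pointwise signed sectional curvatures, so the framework of Section~\ref{sec:cvc0} applies and, as observed just before Definition~\ref{def:rankdistr}, $\mathcal R_v$ is a linear subspace of $\mathcal Z_v$ for every $v\in S_pM$. A line $L$ of rank $3$ has, by definition, a unit tangent vector $v$ with $\dim\mathcal R_v=2$. Because $\dim M=3$, the orthogonal complement $v^\perp$ is two-dimensional, so $\dim\mathcal R_v=2$ forces $\mathcal R_v=v^\perp$. The chain of inclusions $\mathcal R_v\subseteq\mathcal Z_v\subseteq v^\perp$ then collapses, giving $\mathcal Z_v=v^\perp$ and hence $\dim\mathcal Z_v=2$. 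In the notation of Definition~\ref{def:flatplanesdistr} this says precisely that $v\in S_pM\setminus\textgoth Z_p$; that is, rank-$3$ directions lie in the singular set of the flat planes distribution.

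Next I would invoke the hypothesis of two distinct rank-$3$ lines $L_1\neq L_2$ in $T_pM$. Choosing unit vectors $v_1,v_2$ tangent to $L_1,L_2$, the previous step places both in $S_pM\setminus\textgoth Z_p$, and $L_1\neq L_2$ means $v_1\neq\pm v_2$. This is exactly condition (ii) in part~\eqref{item:cvc0-1} of Lemma~\ref{lemma:cvc}, whose equivalence with (i) yields that $p$ is isotropic, completing the argument.

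I do not anticipate a serious obstacle here: the substance is entirely contained in the earlier classification, and the only point requiring genuine care is the dimension count showing that rank $3$ forces $\mathcal Z_v$ to fill all of $v^\perp$ rather than merely containing a line. Everything else is a direct citation of Lemma~\ref{lemma:cvc}.
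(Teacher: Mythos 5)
Your proposal is correct and is essentially the paper's own argument: the paper's one-line proof invokes exactly the inclusion $\mathcal R_v\subseteq\mathcal Z_v$ together with Lemma~\ref{lemma:cvc}~\eqref{item:cvc0-1}, and your write-up merely makes explicit the dimension count showing that a rank-$3$ vector lies in $S_pM\setminus\textgoth Z_p$. The additional care you take (citing Proposition~\ref{prop:signed} so that the $\cvc(0)$ framework applies, and noting that distinct lines give $v_1\neq\pm v_2$) is exactly what the paper leaves implicit, so there is nothing to correct.
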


\begin{proof}
Follows directly from Lemma \ref{lemma:cvc} \eqref{item:cvc0-1}, since $\mathcal R_{v}$ is a subspace of $\mathcal Z_{v}$.
\end{proof}

\begin{lemma}\label{lemma:same}
The flat planes distribution $\mathcal Z$ and the rank distribution $\mathcal R$ coincide on $S_pM$, provided $p \in \mathcal{O}$ is a nonisotropic point.
\end{lemma}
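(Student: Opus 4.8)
The plan is to prove the two inclusions separately. As already observed before Definition~\ref{def:rankdistr}, we always have $\mathcal R_v\subseteq\mathcal Z_v$, so it suffices to establish the reverse inclusion $\mathcal Z_v\subseteq\mathcal R_v$ for every $v\in S_pM$. Since $p\in\mathcal O$ is nonisotropic, Lemma~\ref{lemma:cvc}~\eqref{item:cvc0-2} splits $S_pM$ into the regular set $\textgoth Z_p$ and the singular set $S_pM\cap L_p=\{\pm u\}$, where $u$ spans $L_p$, and I will treat these two cases differently.

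On the regular set the argument is a dimension count. If $v\in\textgoth Z_p$, then $\dim\mathcal Z_v=1$ by definition of the regular set, while the higher rank hypothesis gives $\dim\mathcal R_v\geq1$. Combined with $\mathcal R_v\subseteq\mathcal Z_v$, this forces $\mathcal R_v=\mathcal Z_v$.

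The substantive case is $v=u\in S_pM\cap L_p$, where $\dim\mathcal Z_u=2$, i.e.\ $\mathcal Z_u=u^\perp$; here I must show that $u$ has rank $3$, that is $\mathcal R_u=u^\perp$. Fix an arbitrary unit $w\in u^\perp$ and consider the great circle $c(s)=\cos(s)\,u+\sin(s)\,w$ in $S_pM$, whose velocity $\dot c(s)=-\sin(s)\,u+\cos(s)\,w$ tends to $w$ as $s\to0$. For $s\notin\pi\Z$ the vector $c(s)$ lies in $\textgoth Z_p$, and since both $c(s)$ and $u$ lie on this same great circle, Lemma~\ref{lemma:cvc}~\eqref{item:cvc0-2} identifies $\mathcal Z_{c(s)}$ with its tangent line $\spn\{\dot c(s)\}$. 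By the regular case just handled, $\dot c(s)\in\mathcal R_{c(s)}$, which by the curvature characterization of $\mathcal R$ recalled at the start of this subsection means $\sec\big(\dot\gamma_{c(s)}(t)\wedge P_t(\dot c(s))\big)=0$ for all $t\in\R$, where $P_t$ denotes parallel translation along $\gamma_{c(s)}$.

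The conclusion then follows by letting $s\to0$. For each fixed $t$, the geodesic point $\gamma_{c(s)}(t)$, its velocity $\dot\gamma_{c(s)}(t)$, and the parallel translate $P_t(\dot c(s))$ depend smoothly on the initial data and hence converge, respectively, to $\gamma_u(t)$, $\dot\gamma_u(t)$, and the parallel translate of $w$ along $\gamma_u$. Continuity of sectional curvature then yields $\sec\big(\dot\gamma_u(t)\wedge P_t(w)\big)=0$ for every $t$, so $w\in\mathcal R_u$. As $w\in u^\perp$ was arbitrary, this gives $\mathcal R_u=u^\perp=\mathcal Z_u$, completing the proof. I expect the only delicate point to be this last limit; however, since the zero-curvature condition is checked at each fixed $t$ separately, pointwise convergence of geodesics, velocities, parallel transports, and curvature suffices, and no uniform control in $t$ is needed.
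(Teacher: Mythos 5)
Your proof is correct, but it takes a genuinely different route from the paper's. The paper argues topologically: since $S_pM\cong S^2$ admits no continuous tangent line field, Lemma~\ref{lemma:cont} (continuity of $\mathcal R$ on the set of rank $2$ vectors) forces the existence of some rank $3$ vector $v\in S_pM$; the inclusion $\mathcal R_v\subseteq\mathcal Z_v$ together with Lemmas~\ref{lemma:cvc}~\eqref{item:cvc0-2} and \ref{lemma:oneline} then places $v$ on $L_p$, and the same dimension count you use on the regular set finishes the proof. You instead bypass the hairy ball theorem entirely: after the dimension count on $\textgoth Z_p$ (where $\dim\mathcal Z=1$ and higher rank gives $\dim\mathcal R\geq 1$), you show \emph{directly} that the singular vectors $\pm u\in L_p\cap S_pM$ have rank $3$, by observing that $\mathcal Z_{c(s)}=\spn\{\dot c(s)\}=\mathcal R_{c(s)}$ along any great circle $c$ through $\pm u$ and letting $s\to 0$, using smooth dependence of geodesics and parallel transport on initial data plus continuity of $\sec$ at each fixed $t$ — exactly the closedness-of-$\mathcal R$ ingredient that underlies the paper's proof of Lemma~\ref{lemma:cont}, here applied at a singular point where rank jumps up rather than on the regular set. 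Your reliance on the curvature characterization of $\mathcal R_v$ (hence on Proposition~\ref{prop:signed} and Lemma~\ref{lemma:crit}) is legitimate, as is your remark that no uniformity in $t$ is needed, since membership in $\mathcal R_u$ is checked for each $t$ separately. What each approach buys: yours is more elementary and self-contained (no appeal to the topology of $S^2$ or to Lemma~\ref{lemma:cont}), and it constructively exhibits every $w\in u^\perp$ as lying in $\mathcal R_u$; the paper's is shorter given the lemmas already in place, with the topological step doing the work of producing a rank $3$ vector whose location on $L_p$ is then forced.
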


\begin{proof}
Since $S_pM$ is a $2$-sphere, it does not admit a continuous tangent line field, so Lemma \ref{lemma:cont} implies $\dim \mathcal R_v=2$ for some $v \in S_pM$. As $\mathcal R_w$ is a subspace of $\mathcal Z_w$ for each $w \in S_pM$, Lemmas \ref{lemma:cvc} \eqref{item:cvc0-2} and \ref{lemma:oneline} imply that $v \in L_p$ and hence $\mathcal R_w=\mathcal Z_w$.
\end{proof}

\begin{lemma}\label{lemma:rankonevectors}
The restriction of $\mathcal R$ to $\textgoth{R}_p$ is smooth for each $p \in M$.
\end{lemma}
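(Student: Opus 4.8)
The plan is to realize the line $\mathcal R_v$, for $v$ near a fixed rank-$2$ vector $v_0\in\textgoth R_p$, as the kernel of a \emph{single} smooth endomorphism of constant rank, thereby reducing smoothness of $\mathcal R$ to the standard fact that the kernel of a constant-rank bundle map is a smooth subbundle.

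First I would reformulate the defining condition \eqref{eq:eucldistr}. A parallel field $P_t(w)$ along $\gamma_v$ automatically satisfies $(P_t(w))''=0$, so it is a (normal) Jacobi field precisely when $\mathcal J_{\dot\gamma_v(t)}\big(P_t(w)\big)=0$ for all $t\in\R$. Transporting back to $p$ by parallel translation, I set
\[
A_v(t):=P_t^{-1}\circ\mathcal J_{\dot\gamma_v(t)}\circ P_t\colon v^\perp\to v^\perp,
\]
a self-adjoint endomorphism depending smoothly on $(v,t)$, since $\exp_p$, parallel translation, and the curvature tensor are all smooth. This identifies
\[
\mathcal R_v=\bigcap_{t\in\R}\ker A_v(t).
\]
Working in a smooth orthonormal frame $\{e_1(v),e_2(v)\}$ of $v^\perp$ defined near $v_0$ represents each $A_v(t)$ by a smooth symmetric $2\times2$ matrix, so the bundle $v^\perp$ over $S_pM$ poses no difficulty.

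Next I would select one good time. Since $v_0\in\textgoth R_p$ has $\dim\mathcal R_{v_0}=1$, the intersection above is one-dimensional; in particular $A_{v_0}(t)$ cannot vanish for every $t$, so there is a time $t_0$ with $A_{v_0}(t_0)\neq0$. As $v_0^\perp$ is two-dimensional, $A_{v_0}(t_0)$ then has rank $1$, and since $\mathcal R_{v_0}\subseteq\ker A_{v_0}(t_0)$ with both sides one-dimensional, we obtain $\ker A_{v_0}(t_0)=\mathcal R_{v_0}$. I would then propagate this to nearby $v$: by lower semicontinuity of the rank there is a neighborhood $W$ of $v_0$ with $\operatorname{rank}A_v(t_0)\geq1$ for $v\in W$. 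On the open set $U:=W\cap\textgoth R_p$, every $v$ satisfies $\mathcal R_v\subseteq\ker A_v(t_0)$ with $\dim\mathcal R_v=1$, forcing $\operatorname{rank}A_v(t_0)\leq1$; hence $A_v(t_0)$ has constant rank $1$ on $U$ and $\ker A_v(t_0)=\mathcal R_v$ there. A smooth bundle endomorphism of constant rank has a smooth kernel, so $\mathcal R=\ker A_\cdot(t_0)$ is smooth on $U$, and as $v_0\in\textgoth R_p$ was arbitrary this gives smoothness on all of $\textgoth R_p$.

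The main obstacle, and the crux of the argument, is precisely the passage from the infinite intersection $\bigcap_t\ker A_v(t)$ to the kernel of a single operator: one must exploit the rank-$2$ hypothesis $\dim\mathcal R_v=1$ to bound $\operatorname{rank}A_v(t_0)$ from above, matching the lower bound from semicontinuity and pinning the rank to the constant value $1$ on the open set $U$. I note that this reasoning is uniform in $p$, treating isotropic and nonisotropic points simultaneously and so requiring no case distinction.
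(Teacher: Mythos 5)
Your proof is correct, and it takes a genuinely different route from the paper's. The paper also picks a good time along the geodesic, but works at the far endpoint rather than at $p$: since $b_0$ has rank $2$ there is a $t>0$ with $\sec\big(\dot{\gamma}_{b_0}(t)\wedge P_t(w)\big)\neq 0$, so $\gamma_{b_0}(t)\in\mathcal{O}$; by Lemma~\ref{lemma:same} the rank distribution coincides with the flat planes distribution $\mathcal Z$ on $S_{\gamma_b(t)}M$, whose smoothness on its regular set is Remark~\ref{rem:chi} (kernel of the smoothly varying, constant-rank Jacobi operator), and $\mathcal R_b$ is then recovered by parallel translation back to $T_b(S_pM)$. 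You instead conjugate the Jacobi operator back to $p$ to get $A_v(t)$ and pin down $\ker A_v(t_0)=\mathcal R_v$ by playing lower semicontinuity of rank against the a priori containment $\mathcal R_v\subseteq\ker A_v(t_0)$; note that your bound $\operatorname{rank}A_v(t_0)\leq 1$ uses $\dim v^{\perp}=2$, so your argument is as dimension-specific as the paper's, and your time $t_0$ with $A_{v_0}(t_0)\neq 0$ is, by self-adjointness of the Jacobi operator, the same in substance as the paper's time where some sectional curvature along $\gamma_{b_0}$ is nonzero. Both proofs ultimately rest on the same constant-rank-kernel fact; what yours buys is self-containedness, using only the definition \eqref{eq:eucldistr} and bypassing the $\cvc(0)$ structure theory (Lemmas~\ref{lemma:cvc}, \ref{lemma:oneline}, \ref{lemma:cont}, \ref{lemma:same}) and even Proposition~\ref{prop:signed}, with no isotropic/nonisotropic case distinction, as you observe. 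What the paper's route buys is brevity given that this machinery is already in place for later use, together with the geometrically transparent description of $\mathcal R_b$ as the parallel transport of $\mathcal Z_{\dot{\gamma}_b(t)}$ from a nonisotropic point.
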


\begin{proof}
Let $B \subset \textgoth{R}_p$ be a metric ball with center $b_0 \in B$. It suffices to prove smoothness of $\mathcal R$ on $B$. As $b_0$ is a vector of rank $2$, there exist a unit vector $w\in b_{0}^{\perp}$ and $t>0$, such that $\sec(\dot{\gamma}_{b_0}(t)\wedge P_{t}(w))\neq 0$.  In particular $\gamma_{b_0}(t) \in \mathcal{O}$.  As $\mathcal{O}$ is open, we may assume $\gamma_{b}(t) \in \mathcal{O}$ for all $b \in B$ up to shrinking $B$. From Lemma~\ref{lemma:same}, $\mathcal R_{\dot{\gamma}_b(t)}=\mathcal Z_{\dot{\gamma}_b(t)}$ for each $b \in B$. As the unit tangent vectors $\dot{\gamma}_b(t)$ vary smoothly with $b \in B$, also $\mathcal R_{\dot{\gamma}_b(t)}$ varies smoothly with $b\in B$, see Remark~\ref{rem:chi}. To conclude the proof, notice $\mathcal R_b$ is obtained by parallel translating for time $t$ along $\gamma_b$ the line $\mathcal R_{\dot{\gamma}_b(t)}$ to $T_b (S_pM)$.
\end{proof}

\subsection{Adapted frame}\label{adapted}
Assume $\textgoth{R}_p\neq\emptyset$ and let $B$ be a metric ball contained in $\textgoth{R}_p\subset S_pM$. By Lemma~\ref{lemma:rankonevectors}, there is a smooth unit vector field $e_1$ on $B$ tangent to the rank distribution $\mathcal R$. An orientation on $S_pM$ determines an orthonormal frame $\{e_1,e_2\}$ on $B$. For each $v \in B$ and $t>0$, define
\begin{equation}
E_0(t)=P_t(v)=\dot{\gamma}_v(t), \quad \text{ and } \quad E_i(t)=P_t(e_i), \; i=1,2.
\end{equation}
Then $\{E_0(t),E_1(t),E_2(t)\}$ is a parallel orthonormal frame along $\gamma_v$. By construction, we have that $E_1(t)$ \emph{is a parallel Jacobi field along} $\gamma_v(t)$, in accordance with the higher rank assumption.\footnote{An analogous construction in the weaker context of a finite-volume $\cvc(0)$ $3$-manifold is possible, by choosing the vector field $e_1$ to be tangent to the flat planes distribution $\mathcal Z_v$ near $v\in\textgoth{Z}_p$, see Theorem~\ref{thm:summary}. Nevertheless, in this case, one only concludes that $E_1(t)$ is tangent to the line field $L_{\gamma(t)}$ for $t$ such that $\gamma(t)$ is in the same component $\mathcal C$ of nonisotropic points as $\gamma(0)$. In particular, $\sec(E_0(t)\wedge E_1(t))$ might not vanish if $\gamma(t)\not\in\mathcal C$. This is a crucial step where the higher rank assumption (as opposed to $\cvc(0)$) is used in our main result.} In particular, $\sec(E_0(t)\wedge E_1(t))=0$ for all $t\geq0$.

Fix $v \in B$ and $t_0>0$ for which $t_0 v$ is not a critical point of $\exp_p\colon T_pM\to M$, so that $\exp_p$ is a diffeomorphism between a neighborhood $U$ of $t_0 v$ and a neighborhood $V$ of $\exp_p(t_0v)$. Up to shrinking $B$, we may assume that the image of $U$ under the radial retraction $T_pM\setminus\{0\}\to S_pM$ coincides with $B$. The adapted frames along geodesics issuing from $p\in M$ with initial velocity in $B$ provide an orthonormal \emph{adapted frame} $\{E_0,E_1,E_2\}$ of the open set $V$ in $M$. 
For $t$ near $t_0$, the distance spheres $S(p,t)=\{x\in M:\dist(p,x)=t\}$ intersect the  neighborhood $V$ of $\exp_p(t_0v)$ in smooth codimension one submanifolds, see Figure~\ref{fig:coord}.
The vector fields $E_1(t)$ and $E_2(t)$ are tangent to $S(p,t)$ in $V$, and its outward pointing unit normal is $E_0(t)$.

\begin{figure}[htf]
\centering
\vspace{-0.1cm}
\includegraphics[scale=0.75]{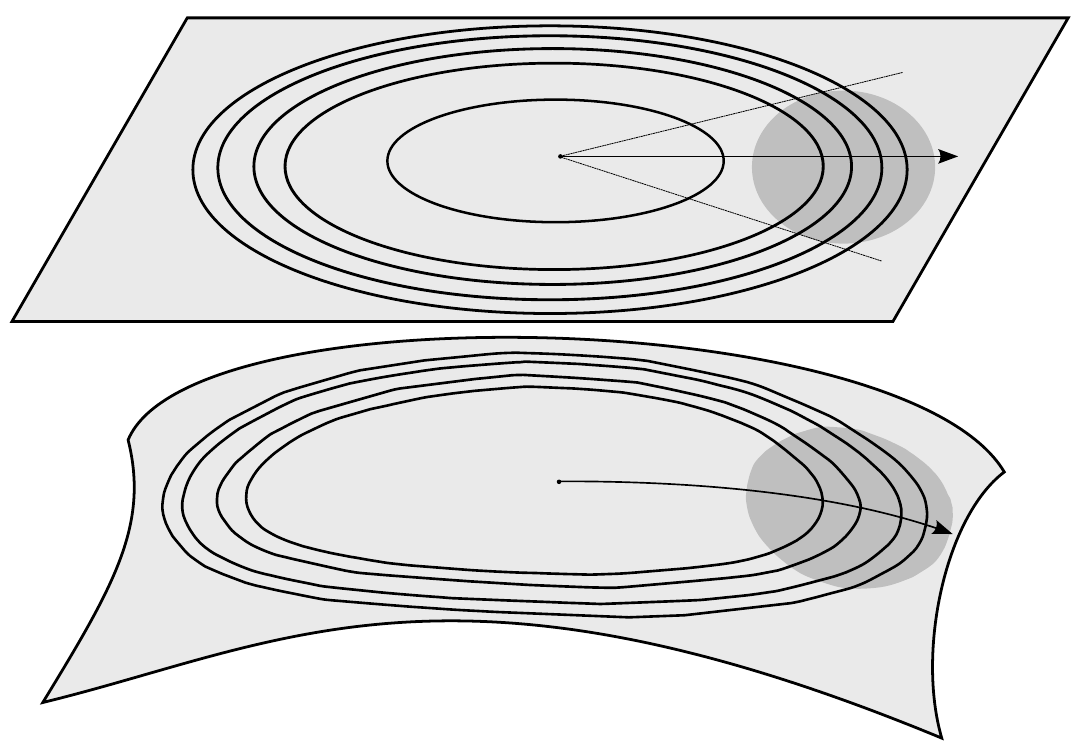}
\begin{pgfpicture}
\pgfputat{\pgfxy(-1.2,4.9)}{\pgfbox[center,center]{$U$}}
\pgfputat{\pgfxy(-2.72,4.65)}{\pgfbox[center,center]{$v$}}
\pgfputat{\pgfxy(-4.1,1.8)}{\pgfbox[center,center]{$p$}}
\pgfputat{\pgfxy(-1.2,2.3)}{\pgfbox[center,center]{$V$}}
\pgfputat{\pgfxy(-0.7,1.5)}{\pgfbox[center,center]{$\gamma_v$}}
\pgfputat{\pgfxy(-8.3,5)}{\pgfbox[center,center]{$T_pM$}}
\pgfputat{\pgfxy(-8.3,1.5)}{\pgfbox[center,center]{$M$}}
\end{pgfpicture}
\vspace{-0.2cm}
\caption{The neighborhood $V$ of $\exp_p(t_0v)$, where the adapted frame $\{E_0,E_1,E_2\}$ is defined.}
\label{fig:coord}
\end{figure}

We now compute the Christoffel symbols of the adapted frame $\{E_0,E_1,E_2\}$. Let $a_{12}^1$ and $a_{12}^2$ be smooth functions on $B$ such that 
$[e_1,e_2]=a_{12}^1 e_1+a_{12}^2 e_2$.
For each geodesic $\gamma_v$ with initial velocity $v\in B$, let $J_i(t)$, $i=1,2$, denote the Jacobi field along $\gamma_v$ with initial conditions $J_i(0)=0$ and $J_{i}'(0)=e_i\in T_v(S_pM)$.  For $t$ near $t_0$, consider $F_t\colon B \to M$ given by $F_t(v)=\exp_p(tv)$. From \eqref{jabfield}, we have $\dd F_t(e_i)=J_i(t)$, $i=1,2$. Using the above and the fact that $J_i$ are invariant under the radial flow generated by $E_0$, one obtains:
\begin{align}\label{brackets1}
&[J_1,J_2]=a_{12}^1J_1+a_{12}^2J_2,&
&[E_0,J_1]=\mathcal{L}_{E_0} J_1=0,&      
&[E_0,J_2]=\mathcal{L}_{E_0} J_2=0.&
\end{align}

\begin{lemma}\label{lemma:jacobi}
For $v \in B$, we have that $J_1(t)=tE_1(t)$ and $J_2(t)=f(t)E_2(t)$, where $f(t)$ is the solution of the ODE
\begin{equation}\label{eq:ode-f}
f''+\sec(E_0\wedge E_2)f=0, \quad \text{ with }\quad f(0)=0, \, f'(0)=1.
\end{equation}
\end{lemma}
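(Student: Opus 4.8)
The plan is to prove both identities by exhibiting explicit vector fields that solve the Jacobi equation with the prescribed initial data $J_i(0)=0$, $J_i'(0)=e_i$, and then invoking uniqueness of Jacobi fields. For the first identity, I would set $W(t)=tE_1(t)$ and verify directly that it is a normal Jacobi field. Since $E_1$ is parallel along $\gamma_v$, we have $E_1'=0$, so $W'=E_1$ and $W''=0$. Moreover, the adapted frame was constructed so that $E_1$ is a parallel Jacobi field, which forces $\mathcal J_{E_0}(E_1)=R(E_1,E_0)E_0=0$ along all of $\gamma_v$ (equivalently, $\sec(E_0\wedge E_1)=0$ together with Lemma~\ref{lemma:crit}). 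Hence $W''+\mathcal J_{E_0}(W)=t\,\mathcal J_{E_0}(E_1)=0$, so $W$ solves the Jacobi equation, with $W(0)=0$ and $W'(0)=E_1(0)=e_1$. Uniqueness gives $J_1(t)=tE_1(t)$.

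The key structural observation for the second identity is that the Jacobi operator $\mathcal J_{E_0}$ is diagonalized by the parallel frame $\{E_1,E_2\}$ at every $t$. Indeed, $\mathcal J_{E_0}$ is self-adjoint on $E_0^{\perp}=\spn\{E_1,E_2\}$ and annihilates $E_1$ by the computation above; therefore $\g(\mathcal J_{E_0}(E_2),E_1)=\g(E_2,\mathcal J_{E_0}(E_1))=0$, which forces $\mathcal J_{E_0}(E_2)\in\spn\{E_2\}$. Taking the inner product with $E_2$ identifies the eigenvalue as $\sec(E_0\wedge E_2)$, so that $\mathcal J_{E_0}(E_2)=\sec(E_0\wedge E_2)\,E_2$. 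This decoupling is the crux of the argument: it is precisely what prevents the Jacobi field $J_2$ from developing an $E_1$-component.

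With this in hand, I would expand $J_2(t)=a(t)E_1(t)+f(t)E_2(t)$ in the parallel orthonormal frame, with $a,f$ smooth. Because $E_1,E_2$ are parallel, $J_2''=a''E_1+f''E_2$, while the diagonalization gives $\mathcal J_{E_0}(J_2)=f\,\sec(E_0\wedge E_2)\,E_2$. Substituting into $J_2''+\mathcal J_{E_0}(J_2)=0$ and comparing components yields the decoupled system $a''=0$ and $f''+\sec(E_0\wedge E_2)f=0$. The initial conditions $J_2(0)=0$ and $J_2'(0)=e_2=E_2(0)$ translate into $a(0)=a'(0)=0$ and $f(0)=0$, $f'(0)=1$; the former forces $a\equiv0$, while the latter is exactly the initial value problem \eqref{eq:ode-f}. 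Hence $J_2(t)=f(t)E_2(t)$.

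I do not expect a genuine obstacle here: once the diagonalization of $\mathcal J_{E_0}$ is in place, everything reduces to a decoupled linear ODE system with standard uniqueness. The one point deserving care is ensuring that $\mathcal J_{E_0}(E_1)$ vanishes along \emph{all} of $\gamma_v$ rather than merely at $t=0$; this is exactly the content of the higher rank hypothesis as encoded in the adapted frame construction, where $E_1$ is a \emph{parallel} Jacobi field, so that Lemma~\ref{lemma:crit} applies pointwise at each $\gamma_v(t)$.
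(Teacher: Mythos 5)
Your proposal is correct and matches the paper's own argument essentially step for step: both verify $tE_1(t)$ solves the Jacobi equation with the right initial data (the paper cites $\sec(E_0\wedge E_1)=0$ plus Lemma~\ref{lemma:crit}, while your direct use of the parallel Jacobi field property is an equivalent shortcut), and both expand $J_2=hE_1+fE_2$ in the parallel frame, use that $E_2$ is an eigenvector of $\mathcal J_{E_0}$ because $E_1$ is, and kill the $E_1$-component via $h''=0$ with vanishing initial conditions. Your explicit self-adjointness justification of the diagonalization is a point the paper leaves implicit, but the approach is the same.
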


\begin{proof}
From $\sec(E_0\wedge E_1)=0$ and Lemma \ref{lemma:crit}, we have $R(E_1,E_0)E_0=0$. Since $E_1(t)$ is parallel, the field $tE_1(t)$ is a Jacobi field. Moreover, it has the same initial conditions as $J_1(t)$ and therefore $J_1(t)=tE_1(t)$. Regarding $J_2(t)$, there exist smooth functions $h(t)$ and $f(t)$ so that $J_2(t)=h(t)E_1(t) +f(t)E_2(t)$. Thus, the Jacobi equation reads
\begin{equation}\label{eq:jacobiaux}
0=J_2''+R(J_2,E_0)E_0=h''E_1+f''E_2+fR(E_2,E_0)E_0.
\end{equation}
Since $E_1$ is an eigenvector of $\mathcal J_{E_0}$, so is $E_2$. Consequently, $R(E_2,E_0,E_0,E_1)=0$. Thus, taking the inner product of \eqref{eq:jacobiaux} with $E_1$, the above gives $h''=0$ and hence $h(t)$ is linear. The initial conditions $J_2(0)=0$ and $J_2'(0)=e_2$ respectively imply $h(0)=0$ and $h'(0)=0$, so $h\equiv 0$.
\end{proof}

Altogether, Lemma \ref{lemma:jacobi} and \eqref{brackets1} imply that the Lie brackets of $\{E_1,E_2,E_3\}$ are
\begin{equation*}
[E_0, E_1]= -\tfrac{1}{t} E_1, \quad [E_0,E_2] =-\tfrac{f'}{f} E_2, \quad [E_1,E_2]=\tfrac{a_{12}^1}{f} E_1-\left(\tfrac1f E_1(f)-\tfrac{a_{12}^2}{t}\right)E_2,
\end{equation*}
and hence applying the Koszul formula \eqref{eq:koszul} we have the following:

\begin{lemma}\label{lemma:christoffel1}
The adapted frame $\{E_0,E_1,E_2\}$ has Christoffel symbols given by:
\begin{align*}
&\nabla_{E_0} {E_0}=0,& 	&\nabla_{E_1} E_0=\tfrac1tE_1,&			 	&\nabla_{E_2} E_0=\tfrac{f'}{f}E_2,\\
&\nabla_{E_0} E_1=0,&	 &\nabla_{E_1} E_1=-\tfrac1t E_0-\tfrac{a_{12}^1}{f}E_2,& 	&\nabla_{E_2} E_1=\left(\tfrac1f E_1(f)-\tfrac{a_{12}^2}{t}\right)E_2, \\
&\nabla_{E_0} E_2=0,& 	 &\nabla_{E_1} E_2=\tfrac{a_{12}^1}{f}E_1,& 		&\nabla_{E_2} E_2=-\tfrac{f'}{f}E_0-\left(\tfrac1f E_1(f)-\tfrac{a_{12}^2}{t}\right)E_1,
\end{align*}
where $f(t)$ is the solution of the ODE \eqref{eq:ode-f}.
\end{lemma}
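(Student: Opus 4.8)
The plan is to treat this as a direct computation from the Koszul formula \eqref{eq:koszul}, since all the input data has already been assembled in the discussion preceding the statement. First I would record the three radial identities $\nabla_{E_0}E_0=\nabla_{E_0}E_1=\nabla_{E_0}E_2=0$, which require no calculation: by construction each $E_j(t)$ is the parallel transport $P_t$ of a fixed vector along the radial geodesic $\gamma_v$, so the frame $\{E_0,E_1,E_2\}$ is parallel along the integral curves of $E_0$. This disposes of the entire first column of the table at once.

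For the remaining six covariant derivatives, I would exploit that $\{E_0,E_1,E_2\}$ is orthonormal. Writing $[E_i,E_j]=\sum_k c_{ij}^k E_k$ for the structure functions, the Koszul formula \eqref{eq:koszul} collapses to $\g(\nabla_{E_i}E_j,E_k)=\tfrac12\big(c_{ij}^k-c_{jk}^i+c_{ki}^j\big)$, so every covariant derivative becomes a finite combination of the $c_{ij}^k$. These are read directly off the three displayed Lie brackets: up to the antisymmetry $c_{ji}^k=-c_{ij}^k$, the only nonzero ones are $c_{01}^1=-\tfrac1t$, $c_{02}^2=-\tfrac{f'}{f}$, $c_{12}^1=\tfrac{a_{12}^1}{f}$, and $c_{12}^2=-\big(\tfrac1f E_1(f)-\tfrac{a_{12}^2}{t}\big)$. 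Substituting these into the reduced Koszul formula, and using metric compatibility $\g(\nabla_{E_i}E_j,E_k)=-\g(\nabla_{E_i}E_k,E_j)$ to halve the bookkeeping, then reproduces each entry; for instance $\g(\nabla_{E_1}E_0,E_1)=\tfrac12(c_{10}^1-c_{01}^1)=\tfrac1t$ and $\g(\nabla_{E_1}E_1,E_2)=\tfrac12(-c_{12}^1-c_{12}^1)=-\tfrac{a_{12}^1}{f}$, in agreement with the claimed formulas.

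There is no genuine conceptual obstacle here; the content has been placed entirely in the preceding lemmas, which identify $J_1=tE_1$ and $J_2=fE_2$ and thereby pin down the three brackets. The only place demanding care will be the sign bookkeeping: one must keep track of the antisymmetry of the structure functions and of the orientation of \eqref{eq:koszul}, and carry the directional-derivative term $E_1(f)$ correctly through the computations of $\nabla_{E_2}E_1$ and $\nabla_{E_2}E_2$, where it appears. Once the structure functions are tabulated, each of the entries is a single line of arithmetic.
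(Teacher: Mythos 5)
Your proposal is correct and follows essentially the same route as the paper, which likewise obtains the table by applying the Koszul formula \eqref{eq:koszul} to the three Lie brackets displayed just before the lemma; your structure-constant reformulation $\g(\nabla_{E_i}E_j,E_k)=\tfrac12\big(c_{ij}^k-c_{jk}^i+c_{ki}^j\big)$ and the observation that the first column vanishes because the frame is parallel along radial geodesics are just efficient packagings of that same computation, and your sample entries check out against the stated formulas.
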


\begin{lemma}\label{lemma:a121}
The vector field $e_1$ is geodesic on $B$ if and only if $a_{12}^1\equiv0$.
\end{lemma}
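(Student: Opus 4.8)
The plan is to recognize that this statement is purely intrinsic to the round sphere $S_pM$ and reduces to a single application of the Koszul formula. By definition, the unit field $e_1$ on $B\subset S_pM$ is \emph{geodesic} precisely when its integral curves are geodesics of $S_pM$, i.e.\ when $\nabla_{e_1}e_1=0$, where $\nabla$ here denotes the Levi-Civita connection of the round metric on $S_pM$ (\emph{not} the ambient connection of $M$). So the whole task is to compute $\nabla_{e_1}e_1$ in terms of the structure functions $a_{12}^1,a_{12}^2$ and read off when it vanishes.

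First I would exploit that $e_1$ has constant unit length: differentiating $\g(e_1,e_1)\equiv 1$ in the direction $e_1$ gives $\g(\nabla_{e_1}e_1,e_1)=0$, so $\nabla_{e_1}e_1$ has no $e_1$-component and is completely determined by its single $e_2$-component. Consequently $e_1$ is geodesic on $B$ if and only if $\g(\nabla_{e_1}e_1,e_2)=0$, and it remains only to evaluate this scalar.

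Next I would apply the Koszul formula \eqref{eq:koszul} with $X=Y=e_1$ and $Z=e_2$. Since $\{e_1,e_2\}$ is orthonormal, all inner products among the frame vectors are constant; the derivative-of-metric terms therefore vanish and the reduced formula \eqref{eq:koszul} applies even with the repeated argument $X=Y=e_1$. Using $[e_1,e_1]=0$ and $[e_2,e_1]=-[e_1,e_2]$, the three terms collapse to $\g(\nabla_{e_1}e_1,e_2)=-\g([e_1,e_2],e_1)$. By the defining relation $[e_1,e_2]=a_{12}^1 e_1+a_{12}^2 e_2$ this equals $-a_{12}^1$, so that $\nabla_{e_1}e_1=-a_{12}^1\,e_2$. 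Comparing with the previous paragraph, $e_1$ is geodesic on $B$ if and only if $a_{12}^1\equiv 0$.

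There is no serious obstacle in this argument; it is a one-line Koszul computation. The only points requiring care are the bookkeeping ones: keeping in mind that the relevant connection is the round one on $S_pM$, that the unit-length condition annihilates the $e_1$-component of $\nabla_{e_1}e_1$, and that it is precisely the orthonormality of $\{e_1,e_2\}$ which legitimizes using the reduced formula \eqref{eq:koszul} with the repeated argument.
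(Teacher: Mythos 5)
Your proof is correct and is essentially the paper's own argument: both use constancy of $\langle e_1,e_1\rangle$ to kill the $e_1$-component of $\overline\nabla_{e_1}e_1$, then apply the reduced Koszul formula \eqref{eq:koszul} with $X=Y=e_1$, $Z=e_2$ to get $\langle\overline\nabla_{e_1}e_1,e_2\rangle=-a_{12}^1$. Your explicit remarks that the connection is the round one on $S_pM$ and that orthonormality justifies the repeated argument in \eqref{eq:koszul} match the paper's (implicit) bookkeeping exactly.
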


\begin{proof}
Denote by $\langle \cdot, \cdot \rangle$ the metric on $S_pM$ and by $\overline{\nabla}$ its Levi-Civita connection. As the vector field $e_1$ has unit length on $B$, we have $\langle \overline{\nabla}_{e_1} e_1,e_1\rangle=\tfrac12e_1\langle e_1,e_1\rangle=0$. To conclude, notice that the Koszul formula \eqref{eq:koszul} yields
\begin{equation*}
\langle \overline{\nabla}_{e_1} e_1,e_2\rangle=\tfrac12\big(\langle [e_1,e_1],e_2\rangle- \langle[e_1,e_2],e_1\rangle+\langle [e_2,e_1],e_1\rangle\big)=-a_{12}^1.\qedhere
\end{equation*}
\end{proof}

We now verify that $e_1$ is a geodesic vector field on $B$. This is easily deduced when $p \in \mathcal{O}$, since by Lemma~\ref{lemma:same} we have $\mathcal R=\mathcal Z$, which is totally geodesic by Lemma \ref{lemma:totgeo}. Suppose now $p\notin\mathcal O$ and $\textgoth{R}_p\neq\emptyset$, in which case we have:

\begin{proposition}\label{prop:totgeo-iso}
The restriction of $\mathcal R$ to $\textgoth{R}_p$ is a totally geodesic distribution.
\end{proposition}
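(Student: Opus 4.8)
The plan is to reduce the statement, via Lemma~\ref{lemma:a121}, to proving that $a_{12}^1\equiv 0$ on the ball $B\subset\textgoth{R}_p$ carrying the adapted frame, since this is exactly the condition that $e_1$ be a geodesic vector field, i.e.\ that the integral curves of the (one-dimensional) rank distribution $\mathcal R$ on $S_pM$ be geodesics. The bridge to curvature is a direct computation from the Christoffel symbols of Lemma~\ref{lemma:christoffel1}: using $[E_0,E_1]=-\tfrac1t E_1$ and $\nabla_{E_0}E_1=0$, one evaluates the $E_2$--component of $R(E_0,E_1)E_1$ and finds
\begin{equation*}
\g\big(R(E_0,E_1)E_1,\,E_2\big)=\frac{a_{12}^1}{f}\left(\frac{f'}{f}-\frac1t\right),
\end{equation*}
where $f$ solves \eqref{eq:ode-f}. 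On the other hand, the three-dimensional decomposition $R=(\Ric-\tfrac{\scal}{4}\g)\owedge\g$ of Proposition~\ref{prop:signed} identifies $\g(R(E_0,E_1)E_1,E_2)$ with a fixed sign multiple of $P(E_0,E_2)$, where $P:=\Ric-\tfrac{\scal}{4}\g$; thus this curvature component vanishes precisely when $P(E_0,E_2)=0$, and the proposition reduces to establishing $P(E_0,E_2)=0$ along $\gamma_v$ for every $v\in B$.

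The core of the argument is to verify $P(E_0,E_2)=0$ at the points $q=\gamma_v(t)$ where $\kappa(t):=\sec\big(E_0(t)\wedge E_2(t)\big)\neq 0$. Two facts hold everywhere along $\gamma_v$: since $E_1$ is a parallel Jacobi field, $\sec(E_0\wedge E_1)=0$; and $E_2$ is then an eigenvector of $\mathcal J_{E_0}$ (as in Lemma~\ref{lemma:jacobi}), so $\g(\mathcal J_{E_0}(E_2),E_1)=0$, which the same decomposition turns into $P(E_1,E_2)=0$. Now fix $t$ with $\kappa(t)\neq 0$. Then not every plane at $q$ is flat, so $q\in\mathcal O$ is nonisotropic and carries the line $L_q$ of Lemma~\ref{lemma:cvc}, with the property that a plane in $T_qM$ is flat if and only if it contains $L_q$. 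Since $\spn\{E_0,E_1\}$ is flat it contains $L_q$, whereas $\spn\{E_0,E_2\}$ is not flat; hence $L_q\neq\spn\{E_0\}$, and a unit generator of $L_q$ has the form $\ell=\cos\phi\,E_0+\sin\phi\,E_1$ with $\sin\phi\neq 0$. Because all planes through $\ell$ are flat, the self-adjoint operator $\hat{\mathcal J}_\ell$ has both restricted eigenvalues equal to zero and so vanishes identically; via $R=P\owedge\g$ this forces $P$ to be a scalar multiple of $\g$ on $\ell^\perp$, so $P(u,E_2)=0$ for the vector $u:=-\sin\phi\,E_0+\cos\phi\,E_1\in\ell^\perp$. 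Expanding $P(u,E_2)$ and using $P(E_1,E_2)=0$ yields $\sin\phi\,P(E_0,E_2)=0$, hence $P(E_0,E_2)=0$ at $q$.

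To finish, fix $v\in B$. As $v$ has rank $2$ while $p$ is isotropic, the vector $e_2$ lies in $\mathcal Z_v\setminus\mathcal R_v$, so $E_2$ is not a parallel Jacobi field and $\kappa\not\equiv 0$ along $\gamma_v$; I would therefore choose $t_0$ (and build the frame) so that $\kappa(t_0)\neq 0$, shrinking $B$ so that $\kappa\neq 0$ on a whole interval $I\ni t_0$ for all $v\in B$. On $I$ the factor $\tfrac{f'}{f}-\tfrac1t$ cannot vanish identically, since $\tfrac{f'}{f}\equiv\tfrac1t$ would give $f(t)=ct$ and hence $\kappa=-f''/f\equiv 0$, a contradiction. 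Picking $t_1\in I$ with $\kappa(t_1)\neq 0$ and $\tfrac{f'}{f}-\tfrac1t\neq 0$ at $t_1$, the two identities above together with $P(E_0,E_2)=0$ force $a_{12}^1(v)=0$; as $v\in B$ was arbitrary, $a_{12}^1\equiv 0$ and Lemma~\ref{lemma:a121} concludes. I expect the main obstacle to be exactly this last degeneracy: the factor $\tfrac{f'}{f}-\tfrac1t$ can vanish at isolated times, and the transversality $L_q\neq\spn\{E_0\}$ (which is what makes $\sin\phi\neq 0$) can fail where $\kappa=0$. Both pathologies are confined to the flat directions, so the argument must be localized to times where $\kappa\neq 0$, where the higher rank hypothesis—rather than mere $\cvc(0)$—guarantees that $E_1$ is globally a parallel Jacobi field.
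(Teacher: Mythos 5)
Your proposal is correct, and it reaches the paper's key identity by the same computation but closes it by a genuinely different mechanism. Both arguments reduce via Lemma~\ref{lemma:a121} to showing $a_{12}^1\equiv 0$ on $B$, and both rest on the formula $\g\bigl(R(E_0,E_1)E_1,E_2\bigr)=\tfrac{a_{12}^1}{f}\bigl(\tfrac{f'}{f}-\tfrac1t\bigr)$, which you computed correctly (it is the paper's $R(E_1,E_2,E_0,E_1)=a_{12}^1\tfrac{f't-f}{f^2t}$ after the pair symmetry of $R$). The divergence is in how this curvature component is shown to vanish. The paper's route is a one-liner valid at \emph{every} point of $\gamma_v$: higher rank gives $\sec(E_0\wedge E_1)=0$ for all $t$, Proposition~\ref{prop:signed} gives pointwise signed curvatures, and Lemma~\ref{lemma:crit} applied to $\mathcal J_{E_1}$ yields $R(E_0,E_1)E_1=0$ outright. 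You instead pass through the three-dimensional decomposition $R=P\owedge\g$, identify the component with $P(E_0,E_2)$, and prove its vanishing only at nonisotropic points where $\kappa\neq0$, via the line $L_q$ of Lemma~\ref{lemma:cvc} and the vanishing of $\hat{\mathcal J}_\ell$; this is correct, but the restriction to $\kappa\neq 0$ is an artifact of your method rather than of the problem---the degeneracies you flag at the end ($\kappa=0$, or $L_q=\spn\{E_0\}$) are harmless because the direct Lemma~\ref{lemma:crit} argument kills $P(E_0,E_2)$ there too. The endgames also differ: the paper assumes $a_{12}^1\neq0$ on $B$ and bootstraps $f't-f\equiv0$ into $f(t)=t$ and $\sec(E_0\wedge E_2)\equiv0$, contradicting rank $2$ of $v$; you avoid the contradiction setup by selecting, for each $v$, a single good time $t_1$ with $\kappa(t_1)\neq0$ and $f't_1-f\neq0$ (your observation that $f=ct$ on $I$ forces $\kappa\equiv 0$ on $I$ is the same differentiation as the paper's $f''t=0$ step), concluding $a_{12}^1(v)=0$ pointwise. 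Your version buys a local, contradiction-free argument; the paper's buys brevity and, as a by-product of the contradiction, the structural information ($f=t$, no critical points of $\exp_p$ along the ray). Two small patches: your choice of $t_0$ presupposes some $t_0>0$ with $\kappa(t_0)\neq0$, whereas rank $2$ a priori only gives $\kappa\not\equiv0$ on $\R$; this is fixable (work with $-v$ if necessary, using that the antipodal map of $S_pM$ preserves $\mathcal R$, so geodesy of $e_1$ near $-v$ transfers to $v$), though note the paper's own final step has the same one-sidedness. Also, your aside ``while $p$ is isotropic'' is unnecessary: $\kappa\not\equiv0$ follows solely from $e_2\notin\mathcal R_v$, without placing $e_2$ in $\mathcal Z_v$, and the proposition should hold without assuming $p\in\mathcal I$.
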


\begin{proof}
By Lemma \ref{lemma:a121}, it suffices to show that $a_{12}^1\equiv0$ on $B$, since $e_1$ is tangent to $\mathcal R$. If this were not the case, we may assume $a_{12}^{1}(b)\neq 0$ for all $b\in B$, up to shrinking $B$. Since $\sec(E_0\wedge E_1)=0$, Lemma \ref{lemma:crit} implies that $R(E_0,E_1)E_1=0$. In particular, $R(E_1,E_2,E_0,E_1)=0$. On the other hand, from Lemma~\ref{lemma:christoffel1}, we have:
\begin{equation*}
R(E_1,E_2,E_0,E_1)= a_{12}^1\frac{f't-f}{f^2 t}.
\end{equation*}
Since we assumed $a_{12}^1$ is nonzero on $B$, it follows that $f' t-f=0$ along every geodesic $\gamma_v(t)$ with initial velocity $v \in B$, for all $t>0$ such that $tv$ is not a critical point of $\exp_p\colon T_pM\to M$.\footnote{i.e., the $t>0$ for which Lemma~\ref{lemma:christoffel1} is valid.} The Jacobi fields $J_1(t)=tE_1(t)$ and $J_2(t)=f(t)E_2(t)$ form a basis of the initially vanishing normal Jacobi fields along $\gamma_v(t)$, see Lemma~\ref{lemma:jacobi}. Thus, $tv$ is not a critical point of $\exp_p$ for all $t>0$ such that $f(t)\neq 0$. In particular, $f't-f=0$ for sufficiently small $t>0$. Differentiating with respect to $t$ yields $f''t=0$, hence $f''=0$. As $f(0)=0$ and $f'(0)=1$, we have $f(t)=t$, and hence $tv$ is not a critical point of $\exp_p$ for all $t>0$. Moreover, \eqref{eq:ode-f} implies $\sec(E_0 \wedge E_2)=0$ for all $t>0$, and hence $v$ is a rank $3$ vector, contradicting $v\in B\subset \textgoth R_p$.
\end{proof}

\begin{remark}\label{rem:simpleChristoffel}
By Proposition~\ref{prop:totgeo-iso}, the Christoffel symbols of $\{E_0,E_1,E_2\}$ given in Lemma~\ref{lemma:christoffel1} can be \emph{a posteriori} simplified using that $a_{12}^1\equiv0$ on $B$.
\end{remark}

\begin{corollary}\label{cor:nocircles}
A great circle $C\subset S_pM$ that is everywhere tangent to the rank distribution $\mathcal{R}$ cannot consist entirely of rank two vectors. 
\end{corollary}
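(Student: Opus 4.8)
The plan is to argue by contradiction, exploiting the fact that the rank distribution is totally geodesic together with the elementary observation that any two distinct great circles on the round sphere $S_pM$ must intersect. Suppose that $C\subset \textgoth R_p$ is a great circle consisting entirely of rank two vectors that is everywhere tangent to $\mathcal R$. Recall that on the open set $\textgoth R_p$ the distribution $\mathcal R$ is a smooth line field, by Lemma~\ref{lemma:rankonevectors}, which is moreover totally geodesic: this is Proposition~\ref{prop:totgeo-iso} when $p$ is isotropic, and it follows from Lemmas~\ref{lemma:same} and~\ref{lemma:totgeo} (since then $\mathcal R=\mathcal Z$) when $p\in\mathcal O$.

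The key steps would be as follows. First, I would choose a tubular neighborhood $N$ of $C$ in $S_pM$ with $\overline N\subset\textgoth R_p$, which is possible because $C$ is compact and $\textgoth R_p$ is open. Next, I would pick a point $v'\in N\setminus C$ close to $C$ and let $C'$ be the great circle through $v'$ tangent to the line $\mathcal R_{v'}$. I would then verify that, for $v'$ sufficiently close to $C$, the great circle $C'$ stays inside $N$, hence inside $\textgoth R_p$. Granting this, since $\mathcal R$ is totally geodesic and $C'$ is a geodesic of $S_pM$ tangent to $\mathcal R$ at $v'\in\textgoth R_p$ that never leaves $\textgoth R_p$, the great circle $C'$ must be tangent to $\mathcal R$ at each of its points. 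Finally, because $C$ and $C'$ are distinct great circles on the $2$-sphere $S_pM$, they meet at some point $y$; there, both $C$ and $C'$ are tangent to the single line $\mathcal R_y$ (using that $y\in C\cap C'\subset\textgoth R_p$ and that both circles are tangent to $\mathcal R$), and since a great circle is determined by a point together with its tangent line, this forces $C'=C$, contradicting $v'\notin C$.

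The main obstacle is the confinement step, namely guaranteeing that $C'$ remains inside $\textgoth R_p$ all the way to the intersection point $y$, since otherwise the totally geodesic property cannot be propagated around the circle. I would handle this using the continuity of $\mathcal R$ on $\textgoth R_p$ (Lemma~\ref{lemma:cont}) together with the fact that $\mathcal R_v=T_vC$ for every $v\in C$: as $v'\to C$, the pair consisting of the basepoint and tangent line defining $C'$ converges to the corresponding data for $C$, so $C'$ converges to $C$. Combined with the elementary spherical-geometry fact that two great circles meeting at a small angle stay within a correspondingly thin tube of one another, this yields $C'\subset N$ for $v'$ close enough to $C$, as required.
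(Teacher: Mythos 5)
Your proof is correct and takes essentially the same route as the paper's: both arguments fix a tubular neighborhood $N\subset\textgoth{R}_p$ of $C$, produce a nearby great circle everywhere tangent to $\mathcal R$ (the paper generates it as an orbit of the unit field $e_1$ spanning $\mathcal R$ and invokes Proposition~\ref{prop:totgeo-iso} to see the orbit is a great circle, while you start from the great circle through $v'$ tangent to $\mathcal R_{v'}$ and use the same proposition, with your confinement step, to propagate tangency around it), and then reach a contradiction at an intersection point of the two circles. The only cosmetic difference is the endgame: the paper observes the two circles meet transversally, so the intersection point has rank $3$, whereas you use $\dim\mathcal R_y=1$ to force the circles to be tangent there and hence equal, contradicting $v'\notin C$ --- the same dichotomy read in the opposite direction.
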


\begin{proof}
We argue by contradiction.  Asssume that some great circle $C \subset S_pM$ consists entirely of rank $2$ vectors and is everywhere tangent to the rank distribution $\mathcal{R}$. As the set of rank two vectors $\textgoth{R}_p$ is open in $S_pM$, some tubular neighborhood $N$ of $C$ in $S_pM$ consists entirely of rank $2$ vectors. Orient the rank line field $\mathcal R$ on $N$ with a unit length vector field $e_1$, and denote by $\phi_t$ the local flow generated by $e_1$. For $v\in N$ sufficiently close to $C$, the orbit $\phi_t (v)$ remains in $N$ for all $t \in [0,2\pi]$. Proposition~\ref{prop:totgeo-iso} implies that this orbit is a great circle $\overline{C}$ of $S_pM$. The two great circles $C$ and $\overline{C}$ must intersect transversally at some point $x$, where the tangent lines $T_x C$ and $T_x \overline{C}$ are both subspaces of $\mathcal R_x$. Therefore $x$ is a rank $3$ vector of $S_pM$, a contradiction.
\end{proof}

\subsection{Totally geodesic flats}\label{subsec:flats}
Recall that if $p\in\mathcal O$ is a nonisotropic point, then there exists a unique rank $3$ line $L_p$ in $T_pM$, see Lemma~\ref{lemma:oneline}. We now show that the linear open book decomposition of $T_pM$ with binding $L_p$ and pages given by $2$-planes that contain $L_p$ exponentiates to the open book decomposition of domains $U_p=M\setminus\cut(p)$ of the form \eqref{eq:injball} mentioned in the Introduction.

\begin{proposition}\label{prop:flats}
If $p\in\mathcal O$ and $\sigma$ is a $2$-plane in $T_pM$ containing $L_p$, then $\exp_p\colon\sigma\to M$ is an isometric immersion with image
$\Sigma:=\exp_p(\sigma)$ a totally geodesic flat immersed submanifold of $M$.
\end{proposition}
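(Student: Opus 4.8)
The plan is to study $F=\exp_p|_\sigma$ in geodesic polar coordinates on $\sigma$, establishing first that $F$ pulls $\g$ back to the flat metric of $\sigma$ (which yields both the isometric immersion and the intrinsic flatness of $\Sigma$), and then that the second fundamental form $\sff$ of $F$ vanishes identically (which gives total geodesy).

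For the first part I would fix an orthonormal basis $\{e_0,e_1\}$ of $\sigma$ with $L_p=\spn\{e_0\}$, extend it to $\{e_0,e_1,e_2\}$ of $T_pM$, and set $u_\theta=\cos\theta\,e_0+\sin\theta\,e_1$ and $w_\theta=\partial_\theta u_\theta=-\sin\theta\,e_0+\cos\theta\,e_1$, so that $F(t,\theta)=\exp_p(t\,u_\theta)$. The key observation is that $w_\theta\in\mathcal R_{u_\theta}$ for every $\theta$: the circle $S_pM\cap\sigma$ is precisely the great circle through $u_\theta$ and the singular set $S_pM\cap L_p=\{\pm e_0\}$, with tangent line $\spn\{w_\theta\}$ at $u_\theta$; since $p\in\mathcal O$, Lemma~\ref{lemma:same} and Lemma~\ref{lemma:cvc}\eqref{item:cvc0-2} identify this line with $\mathcal R_{u_\theta}=\mathcal Z_{u_\theta}$ when $u_\theta\in\textgoth{Z}_p$, while $w_\theta\in u_\theta^\perp=\mathcal R_{u_\theta}$ trivially when $u_\theta\in L_p$. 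Hence $P_t(w_\theta)$ is a parallel Jacobi field, so by \eqref{jabfield} the angular field $\partial_\theta F=\dd(\exp_p)_{tu_\theta}(t\,w_\theta)$ equals $t\,P_t(w_\theta)$, of length $t$. By the Gauss lemma $F^*\g=\dd t^2+t^2\,\dd\theta^2$, the flat metric of $\sigma$; thus $F$ is an isometric immersion and $\Sigma$ is intrinsically flat.

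For total geodesy I would work along each radial geodesic $\gamma_{u_\theta}$, where the image tangent plane is $\spn\{E_0,W\}$ with $E_0=\dot\gamma_{u_\theta}=P_t(u_\theta)$ and $W=P_t(w_\theta)=\tfrac1t\partial_\theta$ parallel, and unit normal the parallel field $\nu=P_t(e_2)$. Since $\nabla_{E_0}E_0=\nabla_{E_0}W=0$, one gets $\sff(E_0,E_0)=\sff(E_0,W)=0$ at once, reducing matters to $\sff(W,W)=0$. Differentiating $g(t):=\g(\nabla_{\partial_\theta}\partial_\theta,\nu)=t^2\,\g(\nabla_W W,\nu)$ along $\gamma_{u_\theta}$, using $\nabla_{\partial_t}\nu=0$, $\nabla_{\partial_t}\partial_\theta=\tfrac1t\partial_\theta$ and $[\partial_t,\partial_\theta]=0$, produces the radial equation $g'=\tfrac1t g+t^2\,R(E_0,W,W,\nu)$.

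The main obstacle is the vanishing of this curvature term. At isotropic points $R=0$, so I would take $q=F(t,\theta)$ nonisotropic with $u_\theta\notin L_p$ (the rest being a set of measure zero). Then $E_0$ has rank $2$ (the rank being constant along $\gamma_{u_\theta}$ and $u_\theta\in\textgoth{Z}_p$), so $E_0\in\textgoth{Z}_q$, and Lemma~\ref{lemma:cvc}\eqref{item:cvc0-2} exhibits $\mathcal R_{E_0}=\mathcal Z_{E_0}=\spn\{W\}$ as the tangent at $E_0$ to the great circle joining $E_0$ to $S_qM\cap L_q$; hence $W\in\spn\{E_0,L_q\}$ and the plane $T_q\Sigma=\spn\{E_0,W\}$ contains $L_q$. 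By Proposition~\ref{prop:signed}, at the nonisotropic point $q$ at most one sectional curvature of a Ricci-diagonalizing frame is nonzero, so the curvature operator $R$ on $\wedge^2 T_qM$ has rank one, $R(\omega)=\kappa\,\langle\omega,\omega_0\rangle\,\omega_0$ with $\kappa\neq0$ and $\omega_0$ a unit $2$-vector; flatness of every plane through the rank-$3$ direction $\ell_q$ spanning $L_q$ gives $\kappa\langle\ell_q\wedge X,\omega_0\rangle^2=\sec(\ell_q\wedge X)=0$ for all $X\perp\ell_q$, forcing $\omega_0=\ast\ell_q$ and hence $\ker R|_{\wedge^2}=\ell_q\wedge\ell_q^\perp$. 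As $\ell_q\in T_q\Sigma$, the $2$-vector $E_0\wedge W$ lies in this kernel, so $R(E_0,W,\cdot,\cdot)\equiv0$ and in particular $R(E_0,W,W,\nu)=0$. With the forcing term gone, $g'=g/t$ gives $g(t)=C\,t$ and $\sff(W,W)=C/t$; since the second fundamental form of the smooth immersion $F$ is bounded near $0\in\sigma$ while $W$ is of unit length, necessarily $C=0$. Thus $\sff\equiv0$ on the dense set $\{t>0,\ u_\theta\notin L_p\}$, hence everywhere by continuity, and $\Sigma$ is a totally geodesic flat immersed submanifold.
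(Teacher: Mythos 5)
Your proof is correct, and it splits into two halves of different character relative to the paper. The first half is essentially the paper's own argument in polar dress: the paper fixes an orthonormal basis $\{v,w\}$ of $\sigma$, deduces $w\in\mathcal R_v$ from Lemmas~\ref{lemma:cvc} and \ref{lemma:same}, and computes $\dd(\exp_p)_{tv}(v)=P_t(v)$, $\dd(\exp_p)_{tv}(w)=P_t(w)$, which is the same content as your identity $\partial_\theta F=t\,P_t(w_\theta)$ plus the Gauss lemma. The total geodesy half, however, takes a genuinely different route. The paper runs it through the adapted frame of Section~\ref{adapted}: Proposition~\ref{prop:totgeo-iso} gives $a_{12}^1\equiv0$, the Christoffel symbols of Lemma~\ref{lemma:christoffel1} then yield $\nabla_{E_0}E_2=\nabla_{E_1}E_2=0$ wherever $tv$ is not a critical point of $\exp_p$ and $v$ has rank $2$, and a density argument over such points finishes. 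You instead derive the radial ODE $g'=\tfrac1t g+t^2R(E_0,W,W,\nu)$ for the one surviving component of $\sff$ and kill the forcing term pointwise: trivially at isotropic points, and at nonisotropic points $q$ by combining constancy of rank along geodesics with Lemma~\ref{lemma:cvc}~\eqref{item:cvc0-2} to get $L_q\subset T_q\Sigma$, and then the rank-one structure of the curvature operator (extracted from the proof of Proposition~\ref{prop:signed}) to conclude $E_0\wedge W\in \ell_q\wedge\ell_q^\perp=\ker R$ — a genuinely stronger pointwise fact than $\sec(T_q\Sigma)=0$, and exactly the linear-algebra input that substitutes for the paper's $a_{12}^1\equiv0$; finally $g=Ct$, so $\g(\sff(W,W),\nu)=C/t$, and smoothness of $\sff$ at $0\in\sigma$ (where $\dd(\exp_p)_0=\Id$) forces $C=0$. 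Your approach buys independence from Proposition~\ref{prop:totgeo-iso} and Corollary~\ref{cor:nocircles}, and it sidesteps the critical-point bookkeeping entirely, since the parallel frame $\{E_0,W,\nu\}$ exists along the whole geodesic and boundedness at the origin replaces density of noncritical points; your observation that $L_q\subset T_q\Sigma$ at nonisotropic points also anticipates the paper's Proposition~\ref{prop:keyprop}. What the paper's route buys in exchange is economy at the level of the whole section: the adapted-frame Christoffel symbols and Proposition~\ref{prop:totgeo-iso} are needed again later (e.g., in Lemmas~\ref{lemma:pde} and \ref{isotoo} and in Lemma~\ref{lemma:patch}), so proving total geodesy from them costs nothing extra there.
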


\begin{proof}
Let $\{v,w\}$ be an orthonormal basis of $\sigma$, and use parallel translation along $t\mapsto tv$ to identify $\{v,w\}$ with an orthonormal basis of $T_{tv} \sigma$, $t>0$. Let $J(t)$ be the normal Jacobi field along $\gamma_{v}(t)$ with $J(0)=0$ and $J'(0)=w$. Lemmas~\ref{lemma:cvc} and \ref{lemma:same} imply that $w \in \mathcal R_v$. Lemma~\ref{lemma:jacobi} implies that $J(t)=tP_t(w)$, where $P_t$ is given by \eqref{eq:paralleltransl}. Using \eqref{jabfield}, we have:
\begin{equation}\label{eq:E0E1}
\begin{aligned}
\dd(\exp_p)_{tv}(v)&=\dot{\gamma}_v(t)=P_t(v), \\
\dd(\exp_p)_{tv}(w)&=\tfrac{1}{t}\dd(\exp_p)_{tv}(tw)=\tfrac{1}{t} J(t)=P_t(w).
\end{aligned}
\end{equation}
Since $P_t$ is a linear isometry, it follows that $\exp_p\colon\sigma\to\Sigma$ is an isometric immersion.

The rank $2$ unit vectors $v \in\sigma\setminus L_p$ determine an adapted frame $\{E_0,E_1,E_2\}$ along the restriction of $\exp_p$ to $\sigma \setminus L_p$. In this adapted frame, $E_2$ is a unit normal field along $\Sigma$. From Lemma~\ref{lemma:christoffel1} and Remark~\ref{rem:simpleChristoffel}, we have $\nabla_{E_0} E_2=\nabla_{E_1} E_2=0$. Thus, if $v$ is a rank $2$ vector and $tv$ is not a critical point of $\exp_p$, the second fundamental form of $\Sigma$ vanishes at $tv$. Since the subset of critical points of $\exp_p$ in $\sigma\setminus L_p$ has dense complement in $\sigma$, it follows that the second fundamental form of $\Sigma$ vanishes identically.
\end{proof}

\begin{remark}
In the above notation, consider $v \in \sigma$ a rank $2$ unit vector and $w \in \sigma$ a unit vector orthogonal to $v$. Then $w \in \mathcal R_v$ determines an adapted frame $\{E_0,E_1,E_2\}$ along $\gamma_v(t)$ and \eqref{eq:E0E1} becomes
\begin{equation}\label{eq:E00E01}
\begin{aligned}
\dd(\exp_p)_{tv}(v)&=E_0(t), \\
\dd(\exp_p)_{tv}(w)&=E_1(t).
\end{aligned}
\end{equation}
\end{remark}

\subsection{Parallel line field}\label{subsec:parallel}
We now use the above adapted frames to construct a parallel line field $X^p$ on domains $U_p=M\setminus\cut(p)$, for $p\in\mathcal O$. Let $\xi \in L_p$ be a unit vector and $v \in S_pM \cap \xi^{\perp}$. Consider the spherical geodesic segment
\begin{equation}\label{eq:cs}
c(s)=\cos(s)\,v+\sin(s)\,\xi, \quad s\in\left[-\tfrac\pi2,\tfrac\pi2\right],
\end{equation}
that joins $-\xi$ to $\xi$ and passes through $v$ when $s=0$. Let $\{e_1,e_2\}$ be an orthonormal frame on $S_pM \setminus \{\pm \xi\}$, with $e_1$ tangent to the rank distribution $\mathcal R$ and oriented by $e_1(c(s))=\dot{c}(s)$. The frame $\{e_1,e_2\}$ is rotationally invariant and induces an adapted frame $\{E_0,E_1,E_2\}$ on $U_p \setminus \gamma_{\xi}(\R)$ with Christoffel symbols given by Lemma~\ref{lemma:christoffel1} (see also Remark~\ref{rem:simpleChristoffel}).

\begin{lemma}\label{lemma:pde}
Let $a,b\colon U_p \setminus \gamma_{\xi}(\R)\to\R$ be smooth functions. The vector field $V=aE_0+bE_1$ is parallel if and only if $a$ and $b$ satisfy the following equations:
\begin{align}
&a'=0,& &b'=0,&\label{radial}\\
&E_2(a)=0,& &E_2(b)=0,&\label{rotational}\\
&E_1(a)=\tfrac{b}{t},& &E_1(b)=-\tfrac{a}{t},&\label{trig}\\
&a\tfrac{f'}{f}+b\left(\tfrac1f E_1(f)-\tfrac{a_{12}^2}{t}\right)=0.\label{trans}
\end{align}
\end{lemma}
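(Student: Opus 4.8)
The plan is to test parallelism directly against the adapted frame. Since $\{E_0,E_1,E_2\}$ is a local orthonormal frame on $U_p\setminus\gamma_\xi(\R)$, the field $V=aE_0+bE_1$ is parallel if and only if $\nabla_{E_0}V=\nabla_{E_1}V=\nabla_{E_2}V=0$. I would expand each of these three covariant derivatives by the Leibniz rule and substitute the Christoffel symbols of Lemma~\ref{lemma:christoffel1}. Because $e_1$ is tangent to the rank distribution $\mathcal R$, Proposition~\ref{prop:totgeo-iso} gives $a_{12}^1\equiv0$, so I would use the simplified symbols of Remark~\ref{rem:simpleChristoffel} throughout, with the primes denoting $E_0$-derivatives (so that $a'=E_0(a)$, $b'=E_0(b)$, and $f'=E_0(f)$).

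Carrying out the three computations, the radial derivative reads
\begin{equation*}
\nabla_{E_0}V=a'E_0+b'E_1,
\end{equation*}
using $\nabla_{E_0}E_0=\nabla_{E_0}E_1=0$, so its vanishing is exactly \eqref{radial}. Using $\nabla_{E_1}E_0=\tfrac1tE_1$ and $\nabla_{E_1}E_1=-\tfrac1tE_0$, the $E_1$-derivative becomes
\begin{equation*}
\nabla_{E_1}V=\Big(E_1(a)-\tfrac{b}{t}\Big)E_0+\Big(E_1(b)+\tfrac{a}{t}\Big)E_1,
\end{equation*}
whose two components vanish precisely when \eqref{trig} holds. Finally, using $\nabla_{E_2}E_0=\tfrac{f'}{f}E_2$ and $\nabla_{E_2}E_1=\big(\tfrac1fE_1(f)-\tfrac{a_{12}^2}{t}\big)E_2$, the $E_2$-derivative is
\begin{equation*}
\nabla_{E_2}V=E_2(a)\,E_0+E_2(b)\,E_1+\Big(a\tfrac{f'}{f}+b\big(\tfrac1fE_1(f)-\tfrac{a_{12}^2}{t}\big)\Big)E_2,
\end{equation*}
so that $\nabla_{E_2}V=0$ is equivalent to the pair \eqref{rotational} (from the $E_0$ and $E_1$ components) together with \eqref{trans} (from the $E_2$ component). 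Collecting the three vanishing conditions yields the stated system, and conversely the system forces all three covariant derivatives to vanish; this is where both implications are obtained simultaneously.

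The content is essentially a bookkeeping computation, so I do not expect a serious obstacle. The only points requiring care are (i) confirming that the adapted frame and the Christoffel symbols of Lemma~\ref{lemma:christoffel1} are genuinely defined on all of $U_p\setminus\gamma_\xi(\R)$, which holds because $\exp_p$ is a diffeomorphism onto $U_p$ and hence has no critical points there, so that the hypotheses under which Lemma~\ref{lemma:christoffel1} was derived are met; and (ii) the consistent reading of the prime as the radial derivative $E_0(\cdot)$, together with the use of $a_{12}^1\equiv0$ from Proposition~\ref{prop:totgeo-iso}, which is what produces the clean form of the symbols used above.
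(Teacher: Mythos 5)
Your proposal is correct and follows essentially the same route as the paper: the paper's proof consists precisely of the three covariant-derivative computations you display, obtained from the Christoffel symbols of Lemma~\ref{lemma:christoffel1} simplified via $a_{12}^1\equiv0$ (Remark~\ref{rem:simpleChristoffel}), with the equivalence to \eqref{radial}--\eqref{trans} read off componentwise from the orthonormality of the adapted frame. Your extra remarks on the frame being defined on all of $U_p\setminus\gamma_\xi(\R)$ and on the prime denoting the $E_0$-derivative merely make explicit what the paper establishes in the setup preceding the lemma.
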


\begin{proof}
From Lemma~\ref{lemma:christoffel1} and Remark~\ref{rem:simpleChristoffel}, we have:
\begin{align*}
\nabla_{E_0} V&=a'E_0+b'E_1,\\
\nabla_{E_1} V&= \left(E_1(a)-\tfrac{b}{t}\right)E_0+\left(E_1(b)+\tfrac{a}{t}\right)E_1,\\
\nabla_{E_2} V&= E_2(a) E_0+ E_2(b)E_1+\left(a\tfrac{f'}{f}+b\left(\tfrac1f E_1(f)-\tfrac{a_{12}^2}{t}\right)\right)E_2.\qedhere
\end{align*}
\end{proof}

\begin{lemma}\label{lemma:reduce}
Smooth functions $\bar{a},\bar{b}\colon S_pM \setminus \{\pm \xi\}\to\R$ satisfying
\begin{align}
&e_1(\bar{a})=\bar{b},&  &e_1(\bar{b})=-\bar{a},\label{reduce1}\\
&e_2(\bar{a})=0,& &e_2(\bar{b})=0,\label{reduce2} 
\end{align}
determine smooth functions $a,b\colon U_p \setminus \gamma_{\xi}(\R)\to\R$ satisfying \eqref{radial}, \eqref{rotational} and \eqref{trig}.
\end{lemma}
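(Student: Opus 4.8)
The plan is to produce $a$ and $b$ as the radially constant extensions of $\bar a$ and $\bar b$, and then to translate the two sphere equations \eqref{reduce1}--\eqref{reduce2} into the three frame equations \eqref{radial}, \eqref{rotational}, \eqref{trig} by means of the Jacobi field identities recorded in Lemma~\ref{lemma:jacobi}. This is essentially a ``lift from the sphere'' argument: the coordinate $t$ enters only through the explicit rescalings between the adapted frame $\{E_0,E_1,E_2\}$ on $M$ and the frame $\{e_1,e_2\}$ on $S_pM$.

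First I would fix the coordinate description of the domain. Since $p\in\mathcal O$ and $L_p\cap S_pM=\{\pm\xi\}$, Lemmas~\ref{lemma:cvc}\eqref{item:cvc0-2} and \ref{lemma:same} show that every $v\in S_pM\setminus\{\pm\xi\}$ is a rank $2$ vector, so the adapted frame construction applies near each such $v$. The exponential map restricts to a diffeomorphism from $\{tv : v\in S_pM\setminus\{\pm\xi\},\ 0<t<\mu(v)\}$ onto $U_p\setminus\gamma_\xi(\R)$, giving global coordinates $(t,v)$ in which $E_0=\partial_t$ is the radial field. Writing $\pi$ for the projection $\exp_p(tv)\mapsto v$, I set
\begin{equation*}
a:=\bar a\circ\pi,\qquad b:=\bar b\circ\pi.
\end{equation*}
These are smooth on $U_p\setminus\gamma_\xi(\R)$ and constant along each radial geodesic $\gamma_v$, so that $E_0(a)=a'=0$ and $E_0(b)=b'=0$; this is \eqref{radial}.

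The key step is to compute the transverse derivatives $E_1(a),E_2(a)$ and $E_1(b),E_2(b)$ in terms of the sphere derivatives $e_i(\bar a),e_i(\bar b)$. Fixing $t$ and writing $F_t(v)=\exp_p(tv)$, the radial invariance of $a$ gives $a\circ F_t=\bar a$, independently of $t$, so for any sphere-tangent vector $u$ at $v$ one has $\dd F_t(u)(a)=u(a\circ F_t)=u(\bar a)$, and likewise for $b$. Lemma~\ref{lemma:jacobi} identifies $\dd F_t(e_1)=J_1(t)=tE_1(t)$ and $\dd F_t(e_2)=J_2(t)=f(t)E_2(t)$, where both rescaling factors $t$ and $f(t)$ are nonzero on $U_p\setminus\gamma_\xi(\R)$ because $U_p$ contains no conjugate points. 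Evaluating at $\exp_p(tv)$ therefore yields
\begin{equation*}
E_1(a)=\tfrac1t\,e_1(\bar a),\quad E_2(a)=\tfrac1f\,e_2(\bar a),\quad E_1(b)=\tfrac1t\,e_1(\bar b),\quad E_2(b)=\tfrac1f\,e_2(\bar b).
\end{equation*}
Substituting \eqref{reduce2} gives $E_2(a)=E_2(b)=0$, which is \eqref{rotational}; substituting \eqref{reduce1} gives $E_1(a)=\tfrac1t\bar b=\tfrac bt$ and $E_1(b)=-\tfrac1t\bar a=-\tfrac at$, which is \eqref{trig}.

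The only delicate point is the bookkeeping in passing between the derivation $E_i$ on $M$ and the derivation $e_i$ on $S_pM$, i.e.\ justifying the rescaling $E_i=\lambda_i^{-1}\,\dd F_t(e_i)$ with $\lambda_1=t$ and $\lambda_2=f$; this is exactly the content of Lemma~\ref{lemma:jacobi}, once one recalls that the adapted frame $\{E_0,E_1,E_2\}$ here is the parallel transport of $\{v,e_1,e_2\}$ along $\gamma_v$ and that $a,b$ are radially constant. The rotational invariance of $\{e_1,e_2\}$ and the global definition of the induced adapted frame on $U_p\setminus\gamma_\xi(\R)$ guarantee that $a$ and $b$ are globally well defined and smooth, which completes the reduction.
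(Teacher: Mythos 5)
Your proposal is correct and follows essentially the same route as the paper's proof: define $a,b$ as radially constant extensions of $\bar a,\bar b$, observe \eqref{radial} holds by construction, and use Lemma~\ref{lemma:jacobi}'s identification $\dd F_t(e_1)=J_1(t)=tE_1(t)$, $\dd F_t(e_2)=J_2(t)=f(t)E_2(t)$ (with $t,f\neq0$ on $U_p\setminus\gamma_\xi(\R)$) to convert \eqref{reduce1} and \eqref{reduce2} into \eqref{trig} and \eqref{rotational}. The only difference is cosmetic bookkeeping: the paper writes the same identities in the form $e_1(\bar a)=tE_1(a)$, $e_2(\bar a)=f(t)E_2(a)$, rather than solving for $E_i$.
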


\begin{proof}
Assume that $\bar{a}$ and $\bar{b}$ satisfy the above and let $\mu\colon S_pM \to (0, \infty]$ denote the \emph{cut time} function \eqref{eq:cut}. For each $v \in S_pM \setminus \{\pm \xi\}$, define $a$ and $b$ so that
\begin{equation}\label{eq:defaabb}
a\big(\gamma_v(t)\big)=\bar{a}(v) \quad \text{ and } \quad b\big(\gamma_v(t)\big)=\bar{b}(v), \quad \text{ for all } t \in (0, \mu(v)).
\end{equation}
Note that $a$ and $b$ satisfy \eqref{radial} by construction.  Recall that the radial flow generated by $E_0$ carries $e_1,e_2 \in T_v (S_pM)$ respectively to the Jacobi fields $J_1(t)=tE_1(t)$ and $J_2(t)=f(t)E_2(t)$ along $\gamma_v(t)$. Thus, we have:
\begin{align*}
&e_1(\bar{a})=J_1(t)(a)=tE_1(a),& &e_1(\bar{b})=J_1(t)(b)=tE_1(b), \\
&e_2(\bar{a})=J_2(t)(a)=f(t)E_2(t)(a),& &e_2(\bar{b})=J_2(t)(b)=f(t)E_2(t)(b).
\end{align*}
Therefore, \eqref{reduce1} implies \eqref{trig}, and as $f \neq 0$ on $U_p$, \eqref{reduce2} implies \eqref{rotational}.
\end{proof}

\begin{lemma}\label{isotoo}
Smooth functions $a,b\colon U_p \setminus \gamma_{\xi}(\R)\to\R$ that satisfy \eqref{radial} on $U_p\setminus\gamma_\xi(\R)$ and \eqref{trans} on $\big(U_p \setminus \gamma_{\xi}(\R)\big)\cap\mathcal{O}$ also satisfy \eqref{trans} on $U_p \setminus \gamma_{\xi}(\R)$.
\end{lemma}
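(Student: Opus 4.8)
The plan is to show that the left-hand side of \eqref{trans}, which I abbreviate
\[
\Phi := a\,\frac{f'}{f} + b\left(\tfrac1f E_1(f) - \tfrac{a_{12}^2}{t}\right),
\]
vanishes identically, since \eqref{trans} is exactly the statement $\Phi=0$. As $f\neq0$ and $t>0$ on $U_p\setminus\gamma_\xi(\R)$ and $a,b,f,E_1(f),a_{12}^2$ are smooth there, $\Phi$ is smooth. By hypothesis $\Phi\equiv0$ on the open set $\mathcal O\cap(U_p\setminus\gamma_\xi(\R))$; smoothness then forces both $\Phi$ and its differential $d\Phi$ to vanish at every point of the closure $\overline{\mathcal O}$. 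Writing $M=\overline{\mathcal O}\sqcup\operatorname{int}\mathcal I$, it therefore remains to prove $\Phi=0$ on $\operatorname{int}\mathcal I\cap(U_p\setminus\gamma_\xi(\R))$, where $M$ is flat.

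I would argue one radial geodesic at a time. Fix a unit vector $v\in S_pM\setminus\{\pm\xi\}$; the geodesics $\gamma_v$ sweep out $U_p\setminus\gamma_\xi(\R)$, and along each of them $a$ and $b$ are constant by \eqref{radial}. Put $\phi(t):=\Phi(\gamma_v(t))$, and let $(\tau_1,\tau_2)$ be a connected component of $\{t\in(0,\mu(v)) : \gamma_v(t)\in\operatorname{int}\mathcal I\}$. On a genuine $M$-neighborhood of such an arc all sectional curvatures vanish, so both $\kappa:=\sec(E_0\wedge E_2)$ and $E_1(\kappa)$ vanish there. The Jacobi equation \eqref{eq:ode-f} then gives $f''=0$, so $f$ is affine in $t$; and differentiating \eqref{eq:ode-f} along $E_1$, using $[E_0,E_1]=-\tfrac1t E_1$ to commute $E_1$ past the two $t$-derivatives, shows that $g:=E_1(f)$ satisfies $g''+\tfrac2t g'=0$, hence $g=c_2-c_1/t$. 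Substituting these into $t\,f\,\Phi$ makes it affine in $t$, so on the arc
\[
\phi(t)=\frac{\alpha t+\beta}{t\,f(t)},
\]
for constants $\alpha,\beta$ depending only on $v$ and the chosen arc.

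Next I would determine $\alpha$ and $\beta$. Since $\gamma_v(0)=p\in\mathcal O$ we have $\tau_1>0$, and since $\gamma_v(\tau_1)\notin\operatorname{int}\mathcal I$ it lies in $\overline{\mathcal O}$; by the first paragraph $\phi(\tau_1)=0$ and $\phi'(\tau_1)=0$. Substituting into the displayed formula yields first $\alpha\tau_1+\beta=0$ and then $\alpha=0$, hence $\alpha=\beta=0$ and $\phi\equiv0$ on $(\tau_1,\tau_2)$. This uses only the left endpoint, so it applies equally when $\tau_2=\mu(v)$, i.e.\ when the flat arc runs into the cut locus. As $\phi$ vanishes on every such arc and also wherever $\gamma_v(t)\in\overline{\mathcal O}$, we conclude $\phi\equiv0$ on $(0,\mu(v))$; letting $v$ vary gives $\Phi\equiv0$, which is \eqref{trans}.

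The main obstacle I anticipate is the transverse term $E_1(f)$ in $\Phi$: a priori it links $\gamma_v$ to infinitesimally nearby geodesics and obeys no closed ODE along $\gamma_v$ alone. The resolution is to run the argument on the interior of $\mathcal I$—not merely on segments where $\gamma_v$ happens to be flat—because only on a full $M$-neighborhood does $E_1(\kappa)$ vanish, which is precisely what closes the equation $g''+\tfrac2t g'=0$ for $g=E_1(f)$. A second subtlety is that the resulting formula for $\phi$ carries two free constants, so I must use the vanishing of both $\phi$ and its radial derivative at the inner endpoint $\tau_1$; this is why smoothness of $\Phi$ up to $\overline{\mathcal O}$, rather than mere continuity, is needed.
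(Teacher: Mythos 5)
Your proof is correct, and its skeleton matches the paper's: continuity settles \eqref{trans} on $\overline{\mathcal O}$, and the remaining points in the interior of $\mathcal I$ are handled by propagating vanishing along radial geodesics from the inner endpoint of each flat arc. The computational core, however, is genuinely different. The paper never differentiates the Jacobi equation transversally; instead it evaluates a mixed curvature component via the Christoffel symbols of Lemma~\ref{lemma:christoffel1}, obtaining $f\,R(E_0,E_2,E_2,E_1)=\bigl(f\bigl(\tfrac{a_{12}^2}{t}-\tfrac1f E_1(f)\bigr)\bigr)'$, whose left side vanishes on $\mathcal I$ because the full curvature tensor vanishes at isotropic points. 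Combined with $f''=0$ and \eqref{radial}, this makes the quantity $f\Phi$ (in your notation) \emph{constant} on the flat segment, so only zeroth-order matching, i.e.\ continuity of $\Phi$ at the inner endpoint, is needed. Your route of differentiating \eqref{eq:ode-f} along $E_1$ and commuting with $[E_0,E_1]=-\tfrac1t E_1$ to get $g''+\tfrac2t g'=0$ for $g=E_1(f)$ is a valid computation (I checked it), but it yields only that $tf\Phi$ is affine, a two-parameter family; the missing relation among your constants (in the notation above, $\bigl(f\bigl(\tfrac1f E_1(f)-\tfrac{a_{12}^2}{t}\bigr)\bigr)'=0$, i.e.\ $c_1=-a_{12}^2\beta_0$) is exactly what the paper's curvature identity supplies. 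You compensate correctly by invoking $d\Phi=0$ on $\overline{\mathcal O}$, which is legitimate: $d\Phi$ is continuous and vanishes on the open set where $\Phi\equiv 0$, so it vanishes on the closure, and this gives the first-order condition $\phi'(\tau_1)=0$ that kills the extra constant. The trade-offs are as you anticipate: the paper's pointwise identity $R(E_0,E_2,E_2,E_1)=0$ holds on all of $\mathcal I$, whereas your $E_1(\kappa)=0$ requires a full open neighborhood in the interior of $\mathcal I$ (which you correctly flag and which the common decomposition $M=\overline{\mathcal O}\sqcup\operatorname{interior}(\mathcal I)$ absorbs); in exchange, your argument avoids computing any curvature component from the Christoffel symbols, at the cost of needing first-order rather than zeroth-order boundary data at $\tau_1$.
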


\begin{proof}
As $a$ and $b$ are smooth functions, they satisfy (\ref{trans}) on $\big(U_p \setminus \gamma_{\xi}(\R)\big)\cap \overline{\mathcal{O}}$. It remains to show that \eqref{trans} is satisfied on the interior points of $\big(U_p\setminus \gamma_{\xi}(\R)\big)\cap\mathcal{I}$. Assume $x$ is one such interior point. 
There exists a vector $v \in S_pM \setminus \{\pm \xi\}$ and $0<t_0<t_1<t_2\leq+\infty$, with
\begin{equation*}
\gamma_v(t_1)=x, \quad \gamma_v\big([t_0,t_2]\big)\subset \mathcal{I},\quad \text{ and }\quad \gamma_v(t_0),\gamma_v(t_2) \in \overline{\mathcal{O}}.
\end{equation*}
Let $\{E_0(t),E_1(t),E_2(t)\}$ be the adapted frame along $\gamma_v(t)$. The curvature tensor vanishes identically on $\mathcal{I}$, hence $R(E_0,E_2,E_2,E_1)=0$ for $t \in [t_0,t_2]$. From Lemma~\ref{lemma:christoffel1} and Remark~\ref{rem:simpleChristoffel}, we have
\begin{equation*}
f\,R(E_0,E_2,E_2,E_1)=\left(f \left(\tfrac{a_{12}^{2}}{t}-\tfrac{1}{f}E_1(f)\right)\right)' \quad \text{ along } \gamma_v(t).
\end{equation*}
Thus, $f \left(\tfrac{a_{12}^{2}}{t}-\tfrac{1}{f}E_1(f)\right)$ is constant on $\gamma_v\big([t_0,t_2]\big)$. By \eqref{radial}, the functions $a$ and $b$ are also constant on $\gamma_v\big([t_0,t_2]\big)$. Therefore,
\begin{equation*}
\left(af'+bf\left(\tfrac{1}{f}E_1(f)-\tfrac{a_{12}^2}{t}\right)\right)'=af'', \quad \text{ for all }t \in [t_0,t_2].
\end{equation*}
As $\gamma_v(t) \in \mathcal{I}$ when $t \in [t_0,t_2]$, on this interval $\sec(E_0\wedge E_2)=-\tfrac{f''}{f}=0$. Thus,
\begin{equation}\label{eq:ugly}
af'+bf\left(\tfrac{1}{f}E_1(f)-\tfrac{a_{12}^2}{t}\right) \quad \text{ is constant on } [t_0,t_2].
\end{equation}
As $\gamma_v(t_0) \in \overline{\mathcal{O}}$, \eqref{trans} is satisfied when $t=t_0$. Equivalently, \eqref{eq:ugly} vanishes when $t=t_0$. Therefore, \eqref{trans} is satisfied for all $t \in [t_0,t_2]$, in particular, at $\gamma(t_1)=x$.
\end{proof}

We are now ready to construct the parallel line field $X^p$ on $U_p=M\setminus\cut(p)$, for $p\in\mathcal O$. As before, let $\mu\colon S_pM \to (0,\infty]$ denote the cut time function \eqref{eq:cut}. For each $x \in U_p \setminus \{p\}$, there are unique $w_x \in S_pM$ and $t_x \in (0, \mu(w_x))$ such that $\gamma_{w_x}(t_x)=x$. For each $w \in S_pM$ let $P_t^{w}\colon T_{p}M \to T_{\gamma_w(t)}M$ denote parallel translation along the geodesic $\gamma_w\colon\R\to M$. Define the vector field $V^{p}$ on $U_p$ by:
\begin{equation}\label{eq:V}
V^{p}(x):=\begin{cases}
\xi, & \text{ if } x=p \\
P^{w_x}_{t_x}(\xi), & \text{ if }x \in U_p\setminus \{p\}.
\end{cases}
\end{equation}
Define the line field $X^{p}$ on $U_p$ by:
\begin{equation}\label{eq:xp}
X^{p}:=\spn \{V^{p}\}.
\end{equation}

The following alternative description of $V^{p}$ on the subset $U_p \setminus \gamma_{\xi}(\R)$ will be useful. Let $v \in S_pM \cap \xi^{\perp}$ and consider the geodesic segment $c(s)$ through $\pm\xi$ and $v$ given by \eqref{eq:cs}. Define smooth functions $\bar{a}$ and $\bar{b}$ along the geodesic $c(s)$ by 
\begin{equation*}
\bar{a}\big(c(s)\big)=\sin(s) \quad \text{ and }\quad \bar{b}\big(c(s)\big)=\cos(s),
\end{equation*}
cf.\ \eqref{eq:defaabb}. Extend $\bar{a}$ and $\bar{b}$ to smooth functions on $S_pM$ invariant under rotations that fix $\{\pm \xi\}$. Consider the rotationally invariant orthonormal frame $\{e_1,e_2\}$ of $S_pM \setminus \{\pm \xi\}$, with $e_1$ tangent to the rank distribution $\mathcal R$, oriented by $e_1\big(c(s)\big)=\dot{c}(s)$. Let $\{E_0,E_1,E_2\}$ be the adapted frame on $U_p \setminus \gamma_{\xi}(\R)$ induced by $\{e_1,e_2\}$, as discussed in the beginning of this subsection. By construction, $\bar{a}$ and $\bar{b}$ satisfy \eqref{reduce1} and \eqref{reduce2} on $S_pM \setminus \{\pm \xi\}$ hence by Lemma~\ref{lemma:reduce} induce smooth radially constant functions $a$ and $b$ on $U_p \setminus \gamma_{\xi}(\R)$ satisfying \eqref{radial}, \eqref{rotational} and \eqref{trig}.

We claim that, on $U_p \setminus \gamma_{\xi}(\R)$, the vector field \eqref{eq:V} is given by:
\begin{equation}\label{eq:alt}
V^{p}=a\,E_0+b\,E_1.
\end{equation} 
By \eqref{rotational} and the rotational invariance of $\{e_1,e_2\}$, it suffices to verify the above along geodesic rays with initial velocity in the interior of the geodesic segment $c(s)$. For each $s \in \left(-\tfrac\pi2,\tfrac\pi2\right)$, it is easy to see that
\begin{equation}\label{eq:ver}
\xi=\bar{a}\big(c(s)\big)\,c(s)+\bar{b}\big(c(s)\big)\,\dot{c}(s).
\end{equation}
The parallel vector fields $E_0(t)$ and $E_1(t)$ along $\gamma_{c(s)}(t)$ are respectively $E_0(t)=P^{c(s)}_t\big(c(s)\big)$ and $E_1(t)=P^{c(s)}_t\big(e_1(c(s))\big)=P^{c(s)}_t\big(\dot{c}(s)\big)$. Therefore,
\begin{align*}
V^{p}\big(\gamma_{c(s)}(t)\big)&=P^{c(s)}_t(\xi)\\
&=P^{c(s)}_t\big(\bar{a}(c(s))c(s)+\bar{b}(c(s))\dot{c}(s)\big)\\
&=\bar{a}(c(s))P^{c(s)}_t\big(c(s)\big)+\bar{b}(c(s))P^{c(s)}_t\big(\dot{c}(s)\big)\\
&=\bar{a}(c(s))E_0(t)+\bar{b}(c(s))E_1(t)\\
&=a\big(\gamma_{c(s)}(t)\big)E_0(t)+b\big(\gamma_{c(s)}(t)\big)E_1(t),
\end{align*}
concluding the proof of \eqref{eq:alt}. Note this also follows from Proposition~\ref{prop:flats}, as parallel translation from $p\in\Sigma$ along $\Sigma$ is conjugate to parallel translation in $T_pM$ along $\sigma$, via $\dd(\exp_p)$. The line field $X^p$ is geometrically related to the above mentioned open book decomposition, as follows.

\begin{lemma}\label{lemma:foliate}
Let $p \in \mathcal{O}$ and $\sigma$ be a $2$-plane in $T_pM$ containing $L_p$.  The restriction of the line field $X^{p}$ given by \eqref{eq:xp} to the flat submanifold $\Sigma=\exp_p(\sigma)$ is tangent to the foliation of $\Sigma$ by lines parallel to $\gamma_{\xi}(\R)$, where $\xi \in L_p$ is a unit vector.
\end{lemma}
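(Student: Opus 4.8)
The plan is to reduce the statement to a single identification: I will show that the restriction of the defining vector field $V^p$ from \eqref{eq:V} to $\Sigma=\exp_p(\sigma)$ agrees, at each point $\exp_p(u)$, with the pushforward $\dd(\exp_p)_u(\xi)$ of the \emph{constant} vector field $\xi\in L_p\subset\sigma$ on the flat plane $\sigma$. Granting this, the conclusion is immediate: the foliation of $\Sigma$ by lines parallel to $\gamma_\xi(\R)$ is the $\exp_p$-image of the foliation of $\sigma$ by affine lines in the direction $\xi$, and the tangent field of the latter is exactly the constant field $\xi$; hence the tangent field of the former is $\dd(\exp_p)(\xi)=V^p|_\Sigma$, so $X^p=\spn\{V^p\}$ is tangent to it.

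To establish the identification, I would first use Proposition~\ref{prop:flats}: since $\exp_p\colon\sigma\to\Sigma$ is an isometric immersion onto a totally geodesic flat submanifold, it intertwines the Euclidean connection of the plane $\sigma$ with the ambient connection $\nabla$ along $\Sigma$ (the correction term being the second fundamental form $\sff$, which vanishes). As $\xi$ is constant, hence parallel, on $\sigma$, its image $\dd(\exp_p)(\xi)$ is a $\nabla$-parallel field along $\Sigma$. In particular, for each ray $t\mapsto tc(s)$ in $\sigma$, with $c(s)$ as in \eqref{eq:cs}, the field $t\mapsto\dd(\exp_p)_{tc(s)}(\xi)$ is parallel along $\gamma_{c(s)}$ and equals $\xi$ at $t=0$, so $\dd(\exp_p)_{tc(s)}(\xi)=P^{c(s)}_t(\xi)$. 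On the other hand, for $x\in\Sigma\cap U_p$ the unique minimizing geodesic from $p$ to $x$ is the radial geodesic $\gamma_{c(s)}$ in $\Sigma$, so by definition \eqref{eq:V} one has $V^p(x)=P^{c(s)}_t(\xi)$ with $x=\gamma_{c(s)}(t)$; combining the two gives $V^p(\exp_p(u))=\dd(\exp_p)_u(\xi)$. (Equivalently, this is exactly \eqref{eq:alt} together with the decomposition \eqref{eq:ver}, using $\dd(\exp_p)_{tc(s)}(c(s))=E_0$ and $\dd(\exp_p)_{tc(s)}(\dot c(s))=E_1$ from the remark after Proposition~\ref{prop:flats}; here $\dot c(s)\in\mathcal R_{c(s)}$ by Lemmas~\ref{lemma:cvc}\eqref{item:cvc0-2} and \ref{lemma:same}, since the great circle $\sigma\cap S_pM$ passes through the singular set $\{\pm\xi\}$.)

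The two routes to $V^p=\dd(\exp_p)(\xi)$ complement each other in handling domains, which is where the only real bookkeeping lies. The frame argument via \eqref{eq:alt} is valid on $\Sigma\cap(U_p\setminus\gamma_\xi(\R))$, while the direct parallel-transport argument of the second paragraph applies uniformly on all of $\Sigma\cap U_p$, including the binding $\gamma_\xi(\R)$, where both sides reduce to $\dot\gamma_\xi(t)$ and the binding is itself a leaf of the foliation. Thus the main (and rather minor) obstacle I anticipate is simply reconciling these descriptions and keeping track that $X^p$ is only defined on $U_p=M\setminus\cut(p)$, so the claim concerns the restriction to $\Sigma\cap U_p$; continuity of both $V^p$ and $\dd(\exp_p)(\xi)$ then removes any ambiguity along $\gamma_\xi(\R)$.
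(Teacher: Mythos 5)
Your proposal is correct and essentially coincides with the paper's proof: the paper likewise reduces the lemma to the identification $\dd(\exp_p)_x(V^\xi)=V^{p}(\exp_p(x))$, where $V^\xi$ is the parallel field on $\sigma$ determined by $\xi$, verified by exactly the frame computation you give parenthetically (via \eqref{eq:ver}, \eqref{eq:E00E01}, \eqref{radial} and \eqref{eq:alt}). Your primary route through Proposition~\ref{prop:flats} (pushforward of the constant field $\xi$ is $\nabla$-parallel along $\Sigma$, hence equals $P_t(\xi)=V^p$ along radial geodesics) is also anticipated by the paper itself, in the remark immediately following the verification of \eqref{eq:alt}.
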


\begin{proof}
Without loss of generality, assume $v \in \sigma\cap\xi^\perp$ so that the geodesic segment $c(s)$ given by \eqref{eq:cs} lies in $S_pM \cap \sigma$. Let $V^\xi$ denote the parallel unit vector field on $\sigma$ determined by $\xi \in L_p$. Clearly $V^\xi$ is tangent to a foliation of $\sigma$ by straight lines parallel to $L_p$. This foliation is mapped under $\exp_p$ to a foliation of $\Sigma$ by lines parallel to $\gamma_{\xi}(\R)$. It remains to check that $\dd(\exp_p)_x\big(V^\xi\big)=V^{p}\big(\exp_p(x)\big)$, $x \in \sigma$. By continuity, it suffices to check this for $x \in \sigma \setminus L_p$. Assume $x=t\,c(s)$ with $s \in (-\tfrac\pi2,\tfrac\pi2)$ and $t\neq0$. Identify the orthonormal basis $\{c(s),\dot{c}(s)\}$ of $T_p \sigma$ with an orthonormal basis of $T_x\sigma$ and use \eqref{eq:ver} to deduce that
\begin{equation*}
V^\xi(x)=\bar{a}\big(c(s)\big)c(s)+\bar{b}\big(c(s)\big)\dot{c}(s).
\end{equation*}
Then, using \eqref{eq:E00E01}, \eqref{radial} and \eqref{eq:alt} respectively, we have:
\begin{align*}
\dd(\exp_p)_w\big(V^\xi\big)&=\bar{a}\big(c(s)\big)E_0(t)+\bar{b}\big(c(s)\big)E_1(t)\\
&=a\big(\gamma_{c(s)}(t)\big)E_0(t)+b\big(\gamma_{c(s)}(t)\big)E_1(t)\\
&=V^{p}\big(\exp_p(x)\big).\qedhere
\end{align*}
\end{proof}

\begin{figure}[htf]
\centering
\vspace{-0.4cm}
\includegraphics[scale=0.7]{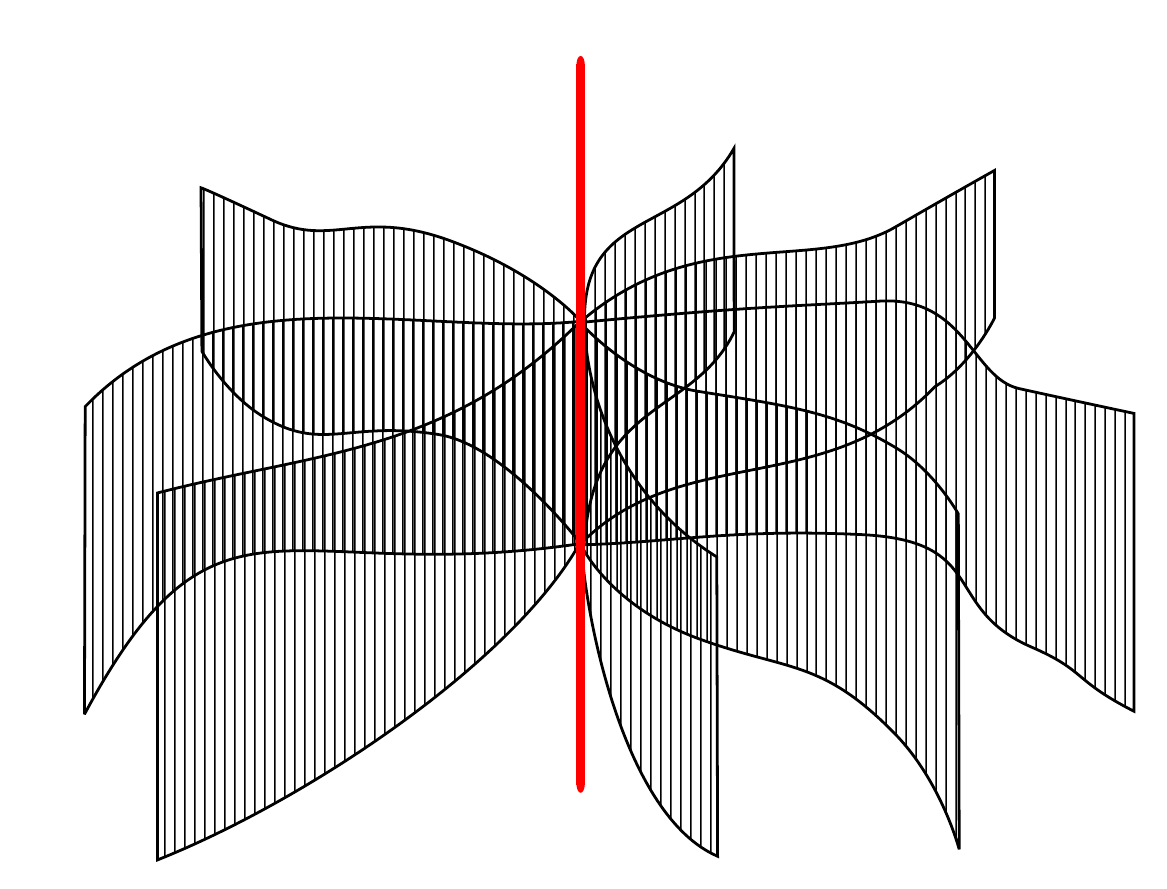}
\begin{pgfpicture}
\pgfputat{\pgfxy(-5.8,5.7)}{\pgfbox[center,center]{$\gamma_\xi(\R)=\exp_p(L_p)$}}
\pgfputat{\pgfxy(0,2.5)}{\pgfbox[center,center]{$\Sigma$}}
\end{pgfpicture}
\vspace{0.2cm}
\caption{Totally geodesic flat submanifolds $\Sigma$, foliated by lines parallel to the geodesic $\gamma_\xi(\R)$, pictured as the vertical red line.}
\label{figure:flats}
\vspace{-0.1cm}
\end{figure}

\begin{proposition}\label{prop:keyprop}
The line field $X^p$ given by \eqref{eq:xp} satisfies $X^{p}(x)=L_x$ for all $x \in U_p\cap \mathcal{O}$.
\end{proposition}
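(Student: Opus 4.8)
The plan is to localize the statement to the single flat page of the open book decomposition containing $x$, and then play off the flatness of that page against the \emph{uniqueness} of the rank three line at nonisotropic points (Lemma~\ref{lemma:oneline}). The boundary case where $x$ lies on the binding $\gamma_\xi(\R)$ is immediate: there $V^p(x)$ is tangent to $\gamma_\xi(\R)$, and since $\xi\in L_p$ has rank $3$ and rank is constant along geodesics, $V^p(x)$ has rank $3$; as $x\in\mathcal O$, Lemmas~\ref{lemma:oneline} and~\ref{lemma:same} then force $X^p(x)=L_x$.

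For the generic case, let $w\neq\pm\xi$ be the initial velocity of the minimizing geodesic from $p$ to $x=\gamma_w(t_x)$, put $\sigma=\spn\{w,\xi\}$, and let $\Sigma=\exp_p(\sigma)$ be the totally geodesic flat page through $x$ (Proposition~\ref{prop:flats}). I would first record two containments of lines inside the plane $T_x\Sigma$. Since $\Sigma$ is flat and totally geodesic, $T_x\Sigma$ is a flat $2$-plane, so by Lemma~\ref{lemma:cvc}\eqref{item:cvc0-2} it contains the rank three line $L_x$. On the other hand, Lemma~\ref{lemma:foliate} identifies $X^p|_\Sigma$ with the foliation of $\Sigma$ by lines parallel to $\gamma_\xi(\R)$, so that $X^p(x)\subset T_x\Sigma$, and in fact $X^p(x)$ corresponds under the isometry $\exp_p\colon\sigma\to\Sigma$ to the $L_p$-direction in $\sigma$.

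It then remains to show these two lines in $T_x\Sigma$ agree, which I would prove by contradiction. Assuming $L_x\neq X^p(x)$, the complete $L$-geodesic $\beta$ through $x$ (Theorem~\ref{thm:summary}) is tangent to $L_x\subset T_x\Sigma$, hence stays in the totally geodesic page $\Sigma$ and lifts to a straight line $\ell_\beta$ in $\sigma$ whose direction is \emph{not} parallel to $L_p$. Two non-parallel lines in the flat plane $\sigma$ must meet, so $\ell_\beta$ crosses $L_p$ at a point $y_0$, and $q:=\exp_p(y_0)$ lies on both $\beta$ and the binding $\gamma_\xi(\R)$. But $q\in\beta\subset\mathcal O$ is nonisotropic, while $\dot\beta(q)$ and $\dot\gamma_\xi(q)$ are two \emph{transverse} rank three directions at $q$ (being the $\dd(\exp_p)$-images of the transverse directions $\ell_\beta$ and $L_p$), contradicting the uniqueness of the rank three line given by Lemma~\ref{lemma:oneline}.

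The main obstacle is this final transversality argument, and its two load-bearing points are geometric rather than computational: one must ensure that $\beta$ does not escape the flat page $\Sigma$ (which is exactly why total geodesy of $\Sigma$ together with the containment $L_x\subset T_x\Sigma$ are needed), and that $\beta$ is \emph{forced} to strike the binding (which is where flatness of $\sigma$, i.e.\ that non-parallel affine lines in a plane always intersect, does the work). Once these are in place, the rigidity comes entirely from there being at most one rank three line at each point of $\mathcal O$; no curvature computation is required beyond what is already encoded in the flat page structure of Proposition~\ref{prop:flats} and Lemma~\ref{lemma:foliate}.
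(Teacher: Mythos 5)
Your proof is correct and takes essentially the same route as the paper's: localize to the totally geodesic flat page $\Sigma=\exp_p(\sigma)$ through $x$ (Proposition~\ref{prop:flats}), observe $L_x\subset T_x\Sigma$ because every flat plane contains $L_x$, and derive a contradiction from the non-parallel lines $\exp_x(L_x)$ and $\gamma_\xi(\R)$ meeting at a point which Lemma~\ref{lemma:oneline} would force to be isotropic, against $\gamma_\xi(\R)\subset\mathcal O$ from Theorem~\ref{thm:summary}. Your two minor variants---handling the binding case via rank-constancy along geodesics rather than via the $L$-foliation, and lifting the intersection argument to the flat plane $\sigma$, which is in fact slightly more careful than the paper's argument inside the merely immersed surface $\Sigma$---do not change the substance of the argument.
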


\begin{proof}
By Theorem \ref{thm:summary}, $V^{p}(\gamma_{\xi}(t))=P^{\xi}_t(\xi)=\dot{\gamma}_{\xi}(t) \in L_{\gamma_{\xi}(t)}$ for each $t \in \R$. Thus, $X^p(x)=L_x$ for all $x\in U_p \cap \gamma_{\xi}(\R)$. For $x \in \big(U_p \setminus \gamma_{\xi}(\R)\big)\cap\mathcal{O}$, there exists $v_x \in S_pM \setminus\{\pm \xi\}$ and $t_x \in (0,\mu(v_x))$ such that $\gamma_{v_x}(t_x)=x$. Consider the $2$-plane $\sigma=\spn\{\xi,v_x\}$ in $T_pM$. By Proposition~\ref{prop:flats}, $\Sigma=\exp_p(\sigma)$ is a totally geodesic flat immersed submanifold of $M$, and hence the $2$-plane $T_x \Sigma$ in $T_xM$ has zero sectional curvature. As the line $L_x$ in $T_x M$ lies in every $2$-plane of zero sectional curvature, we have that $L_x$ lies in $T_x \Sigma$. If $V^{p}(x) \notin L_x$, then the two lines $\gamma_{\xi}(\R)$ and $\exp_x(L_x)$ are not parallel in $\Sigma$, by Lemma~\ref{lemma:foliate}.  Consequently, they intersect transversally at some point $y\in\gamma_{\xi}(\R)$. By Lemma~\ref{lemma:oneline}, $y\in\mathcal I$, contradicting the fact that $\gamma_{\xi}(\R)\subset \mathcal{O}$ by Theorem~\ref{thm:summary}. Therefore, $V^{p}(x) \in L_x$ and hence $X^{p}(x)=L_x$.
\end{proof}

\begin{corollary}\label{corollary:perptotgeo}
The distribution $L^{\perp}$ defined on $\mathcal{O}$ is totally geodesic. 
\end{corollary}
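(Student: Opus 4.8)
The plan is to prove the geodesic-tangency form of total geodesy directly, feeding on the totally geodesic flat pages supplied by Proposition~\ref{prop:flats}. Concretely, I would fix a nonisotropic point $x_0\in\mathcal O$, a unit vector $Z\in L_{x_0}^\perp$, and the unit-speed geodesic $\gamma=\gamma_Z$ with $\dot\gamma(0)=Z$, and show that $\dot\gamma(t)\in L^\perp_{\gamma(t)}$ for all $t$ (small enough) with $\gamma(t)\in\mathcal O$. Since total geodesy of $L^\perp$ is a local condition and $L^\perp$ is only defined on $\mathcal O$, it suffices to argue along such geodesics locally.

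First I would form the $2$-plane $\sigma=\spn\{Z,\xi\}$, where $\xi\in L_{x_0}$ is a unit vector. As $\sigma$ contains the binding line $L_{x_0}$, it is a page of the linear open book decomposition of $T_{x_0}M$, so Proposition~\ref{prop:flats} gives that $\Sigma=\exp_{x_0}(\sigma)$ is a totally geodesic flat immersed submanifold. Because $\Sigma$ is totally geodesic and $\dot\gamma(0)=Z\in T_{x_0}\Sigma$, the geodesic $\gamma$ is a geodesic of $\Sigma$; hence it stays in $\Sigma$ with $\dot\gamma(t)\in T_{\gamma(t)}\Sigma$ for small $t$.

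Next, by Proposition~\ref{prop:keyprop} together with Lemma~\ref{lemma:foliate}, the line field $L$ agrees on $\Sigma\cap\mathcal O$ with the foliation of $\Sigma$ by lines parallel to $\gamma_\xi$. Let $V$ be the unit field spanning this foliation, i.e.\ the pushforward under the local isometry $\exp_{x_0}$ of the constant field $\xi$ on $\sigma$. Since $\sigma$ is flat and $\xi$ is constant, $V$ is parallel on $\Sigma$, and since $\Sigma$ is totally geodesic this parallelism holds in $M$ along curves in $\Sigma$. Then both $\dot\gamma$ (as $\gamma$ is a geodesic) and $V$ are parallel along $\gamma$, so $\g(\dot\gamma(t),V(\gamma(t)))$ is constant; it vanishes at $t=0$ because $\g(Z,\xi)=0$, hence it vanishes for all $t$. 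Wherever $\gamma(t)\in\mathcal O$, the field $V$ spans $L_{\gamma(t)}$, so $\dot\gamma(t)\perp L_{\gamma(t)}$, which is exactly the assertion.

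The step I expect to be the crux is the transverse one: controlling how $L$ tilts as one leaves a single flat page. In the adapted frame of Section~\ref{adapted} this is precisely the vanishing of the $E_2$-component of $\nabla_{E_2}V$, i.e.\ equation \eqref{trans}, which is not forced by \eqref{radial}, \eqref{rotational}, \eqref{trig} alone. The resolution is that the open book has a page through \emph{every} direction of $L^\perp$: for each $Z\in L^\perp_{x_0}$ the plane $\spn\{Z,L_{x_0}\}$ is again a page, so the parallel-line foliation on the associated flat supplies exactly the missing transverse control. I note that running this for all $Z$ shows the stronger fact that $V$ is parallel at $x_0$ in every direction, but I would state the conclusion only as the total geodesy of $L^\perp$, as required, and defer parallelism of $L$ to Proposition~\ref{prop:totgeoandparallel}.
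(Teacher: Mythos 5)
Your proposal is correct and takes essentially the same route as the paper's proof: you exponentiate the page $\sigma=\spn\{Z,L_{x_0}\}$ via Proposition~\ref{prop:flats}, keep the geodesic inside the resulting totally geodesic flat, and identify the parallel-line foliation with $L$ via Lemma~\ref{lemma:foliate} and Proposition~\ref{prop:keyprop}; your parallel-transport computation showing $\g(\dot\gamma,V)$ is constant merely makes explicit the step the paper states as ``$\gamma_v$ remains perpendicular to the foliation of $\Sigma$ by straight lines.'' The worry in your last paragraph about the transverse equation \eqref{trans} is superfluous for this corollary, since a geodesic issuing tangent to $L^\perp$ lies entirely in its own page and never tilts out of it---as you in fact resolve yourself.
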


\begin{proof}
Let $p \in \mathcal{O}$ and $v \in L_p^{\perp}$.  We must show that $\dot{\gamma}_v(t) \in L_{\gamma(t)}^{\perp}$ for all $t$ sufficiently small.  Let $\sigma$ be the $2$-plane in $T_pM$ containing $v$ and $L_p$.  By Proposition~\ref{prop:flats}, $\Sigma=\exp_p(\sigma)$ is a totally geodesic flat.  Therefore, the geodesic $\gamma_v(t)$ stays in $\Sigma$ and remains perpendicular to the foliation of $\Sigma$ by straight lines parallel to $\gamma_{\xi}(\R)=\exp_p(L_p)$.   By Lemma~\ref{lemma:foliate}, this foliation is tangent to the line field $X^p$, which agrees with the line field $L$ on $\mathcal{O}$ by Propostion~\ref{prop:keyprop}.
\end{proof}

\begin{corollary}\label{corollary:completeparallel}
The line field $L$ is parallel on each connected component of $\mathcal{O}$.
\end{corollary}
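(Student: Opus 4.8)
The plan is to observe that this corollary is essentially immediate once the two preceding results are in hand, so the main work is simply to verify that their hypotheses are met and then invoke them. The manifold $M$ here is a complete higher rank $3$-manifold, so by Proposition~\ref{prop:signed} it has pointwise signed sectional curvatures, and as noted in the Introduction the higher rank condition forces $\cvc(0)$. Hence $M$ falls squarely within the framework of Section~\ref{sec:cvc0}, and in particular Proposition~\ref{prop:totgeoandparallel} is applicable to it.

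First I would fix an arbitrary connected component $\mathcal C$ of the open set $\mathcal O$ of nonisotropic points. By Corollary~\ref{corollary:perptotgeo}, the distribution $L^{\perp}$ is totally geodesic on all of $\mathcal O$; restricting to $\mathcal C$, it is in particular totally geodesic on $\mathcal C$. This is exactly the hypothesis required by Proposition~\ref{prop:totgeoandparallel}.

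I would then simply apply Proposition~\ref{prop:totgeoandparallel} to this component $\mathcal C$, which yields that $L$ is parallel on $\mathcal C$. Since $\mathcal C$ was an arbitrary connected component of $\mathcal O$, this gives the conclusion that $L$ is parallel on each connected component of $\mathcal O$, as claimed.

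There is no real obstacle in this argument: the substance was already carried out in establishing Corollary~\ref{corollary:perptotgeo} (via the totally geodesic flats and the line field $X^p$) and in the evolution-equation computation proving Proposition~\ref{prop:totgeoandparallel}. The only point deserving a moment's care is confirming that the hypotheses of Proposition~\ref{prop:totgeoandparallel}—namely $\cvc(0)$ together with pointwise signed sectional curvatures—are indeed inherited from the higher rank assumption, which is precisely the content of Proposition~\ref{prop:signed} combined with the Introduction's remark that higher rank implies $\cvc(0)$.
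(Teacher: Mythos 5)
Your proposal is correct and follows the paper's proof exactly: the paper derives this corollary as an immediate consequence of Proposition~\ref{prop:totgeoandparallel} and Corollary~\ref{corollary:perptotgeo}. Your additional care in checking that the hypotheses of Proposition~\ref{prop:totgeoandparallel} (namely $\cvc(0)$ and pointwise signed sectional curvatures, via Proposition~\ref{prop:signed} and the higher rank assumption) are inherited is a sound, if implicit in the paper, piece of diligence rather than a different argument.
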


\begin{proof}
Immediate consequence of Proposition~\ref{prop:totgeoandparallel} and Corollary~\ref{corollary:perptotgeo}.
\end{proof}

\begin{proposition}\label{prop:keyprop2}
The line field $X^p$ given by \eqref{eq:xp} is parallel on $U_p=M \setminus \cut(p)$.
\end{proposition}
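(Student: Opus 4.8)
The plan is to prove the stronger statement that the unit vector field $V^p$ of \eqref{eq:V} is itself parallel, which immediately yields that the line field $X^p=\spn\{V^p\}$ is parallel. Since $\exp_p$ restricts to a diffeomorphism onto $U_p$ from a star-shaped domain in $T_pM$, radial parallel translation of the fixed vector $\xi$ produces a field $V^p$ that is smooth on all of $U_p$ (including along $\gamma_\xi(\R)$ and at $p$, where the adapted frame degenerates but $V^p$ does not). Consequently $\nabla V^p$ is a continuous tensor, and it suffices to show that it vanishes on the dense open subset $U_p\setminus\gamma_\xi(\R)$; vanishing on all of $U_p$ then follows by continuity. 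This reduces the problem to a computation in the adapted frame.

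On $U_p\setminus\gamma_\xi(\R)$ I would use the representation $V^p=a\,E_0+b\,E_1$ from \eqref{eq:alt}, where the functions $a,b$ have already been arranged, via Lemma~\ref{lemma:reduce}, to satisfy equations \eqref{radial}, \eqref{rotational} and \eqref{trig}. By Lemma~\ref{lemma:pde}, parallelism of $V^p$ is then equivalent to the single remaining transversal equation \eqref{trans}. Thus the entire task collapses to verifying \eqref{trans} on $U_p\setminus\gamma_\xi(\R)$.

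First I would establish \eqref{trans} on the nonisotropic part $\big(U_p\setminus\gamma_\xi(\R)\big)\cap\mathcal{O}$. By Proposition~\ref{prop:keyprop} the line field $X^p$ agrees with $L$ there, and by Corollary~\ref{corollary:completeparallel} the line field $L$ is parallel on every connected component of $\mathcal{O}$. Since $V^p$ is a continuous unit section of the parallel line field $L$, its parallel transport along any curve remains a continuous unit section of $L$, hence coincides with $V^p$ (the sign being fixed at the initial point and locally constant by continuity); therefore $V^p$ is parallel on this set. In particular $\nabla V^p=0$ there, so Lemma~\ref{lemma:pde} gives that \eqref{trans} holds on $\big(U_p\setminus\gamma_\xi(\R)\big)\cap\mathcal{O}$.

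Finally I would propagate \eqref{trans} across the isotropic points, which is exactly the content of Lemma~\ref{isotoo}: because \eqref{radial} holds throughout $U_p\setminus\gamma_\xi(\R)$ and \eqref{trans} holds on its intersection with $\mathcal{O}$, that lemma yields \eqref{trans} on all of $U_p\setminus\gamma_\xi(\R)$. By Lemma~\ref{lemma:pde}, $V^p$ is then parallel on $U_p\setminus\gamma_\xi(\R)$, and by the continuity argument of the first paragraph $\nabla V^p=0$ on all of $U_p$. The main obstacle is precisely this passage across $\mathcal{I}$: the line field $L$, and with it the direct parallelism argument, is defined only on $\mathcal{O}$, so one cannot conclude \eqref{trans} at interior isotropic points directly. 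Lemma~\ref{isotoo} circumvents this by using that the curvature vanishes on $\mathcal{I}$, turning the left-hand side of \eqref{trans} into a quantity that is conserved along the radial geodesics lying in $\mathcal{I}$ and that must therefore vanish, since it already vanishes where such a geodesic meets $\overline{\mathcal{O}}$.
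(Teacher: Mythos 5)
Your proposal is correct and follows essentially the same route as the paper: establish \eqref{trans} on $\big(U_p\setminus\gamma_\xi(\R)\big)\cap\mathcal{O}$ via Proposition~\ref{prop:keyprop}, Corollary~\ref{corollary:completeparallel} and Lemma~\ref{lemma:pde}, propagate it across $\mathcal{I}$ with Lemma~\ref{isotoo}, and conclude with Lemma~\ref{lemma:pde}. Your added care — the sign/continuity argument upgrading parallelism of the line field $L$ to parallelism of the unit section $V^p$, and the smoothness-of-$V^p$ plus density argument to cover $\gamma_\xi(\R)$ and $p$ — merely makes explicit details the paper's proof leaves implicit.
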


\begin{proof}
By Proposition ~\ref{prop:keyprop} and Corollary~\ref{corollary:completeparallel}, $X^{p}$ is parallel on $U_p\cap \mathcal{O}$, and it remains to show that $X^{p}$ is parallel on $U_p\cap\mathcal{I}$. As mentioned above, the functions $a$ and $b$ in \eqref{eq:alt} satisfy \eqref{radial}, \eqref{rotational} and \eqref{trig} on $U_p \setminus \gamma_{\xi}(\R)$. Since $X^p$ is parallel on $U_p\cap \mathcal{O}$, by Lemma~\ref{lemma:pde}, they satisfy \eqref{trans} on $\big(U_p \setminus \gamma_{\xi}(\R)\big)\cap \mathcal{O}$ and hence, by Lemma~\ref{isotoo}, $a$ and $b$ satisfy \eqref{radial}, \eqref{rotational}, \eqref{trig} and \eqref{trans} on all of $U_p \setminus \gamma_{\xi}(\R)$. The result now follows from Lemma~\ref{lemma:pde}. 
\end{proof}

\section{Proof of Theorem~\ref{thm:A}}\label{sec:proofA}
The strategy to prove Theorem \ref{thm:A} is to patch together the line fields constructed in Proposition~\ref{prop:keyprop} to construct a globally parallel line field on $M$. Then, the universal covering of $M$ splits a line as a consequence of de Rham decomposition theorem. For this, we will need the following:

\begin{lemma}\label{lemma:patch}
Let $p \in \mathcal{O}$ and $\sigma$ be a $2$-plane in $T_pM$ containing $L_p$. 
Consider $\Sigma=\exp_p(\sigma)$ and the line field $X^p$ given by \eqref{eq:xp}. Assume that $x \in \Sigma$ is an isotropic point and let $C \subset S_xM$ be the great circle $C=T_x \Sigma \cap S_xM$. Then:
\begin{enumerate}
\item\label{item:patch1} There are precisely two rank $3$ vectors in $C$, given by $C \cap X^{p}(x)$;
\item\label{item:patch2} For each $c \in C$, $T_cC$ is a subspace of the rank distribution $\mathcal R_c$. 
\end{enumerate}
\end{lemma}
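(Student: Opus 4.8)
The plan is to prove the two assertions separately, handling part~\eqref{item:patch2} first since part~\eqref{item:patch1} will invoke it through Corollary~\ref{cor:nocircles}. For part~\eqref{item:patch2}, I would exploit that $\Sigma=\exp_p(\sigma)$ is a totally geodesic flat (Proposition~\ref{prop:flats}). Fix $c\in C$ and let $w_c\in T_x\Sigma$ be the unit vector orthogonal to $c$, so that under the canonical identification $T_c(S_xM)\cong c^\perp$ the tangent line $T_cC$ is spanned by $w_c$. Because $\Sigma$ is totally geodesic, the geodesic $\gamma_c$ stays in $\Sigma$ and parallel translation $P_t$ along it preserves $T\Sigma$; hence $\{\dot\gamma_c(t),P_t(w_c)\}$ is an orthonormal frame of $T_{\gamma_c(t)}\Sigma$ for every $t$, spanning a plane tangent to the flat $\Sigma$ and therefore of zero sectional curvature. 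Since $M$ has pointwise signed curvatures (Proposition~\ref{prop:signed}), Lemma~\ref{lemma:crit} gives $R(P_t(w_c),\dot\gamma_c)\dot\gamma_c=0$ for all $t$, so the parallel field $P_t(w_c)$ solves the Jacobi equation. Thus $w_c\in\mathcal R_c$, i.e.\ $T_cC\subset\mathcal R_c$.

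For part~\eqref{item:patch1}, I would first record that $X^p(x)\subset T_x\Sigma$ by Lemma~\ref{lemma:foliate}, so $C\cap X^p(x)$ consists precisely of the two antipodal unit vectors $\pm\eta$ spanning $X^p(x)$, and that $\eta$ points along the leaf of the foliation of $\Sigma$ by lines parallel to $\gamma_\xi$. Existence of a rank~$3$ vector in $C$ is then immediate: by part~\eqref{item:patch2} the great circle $C$ is everywhere tangent to $\mathcal R$, so Corollary~\ref{cor:nocircles} forbids $C$ from consisting entirely of rank~$2$ vectors.

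The crux is to show that \emph{every} rank~$3$ vector $c_0\in C$ lies in $X^p(x)$; since rank is invariant under $v\mapsto -v$, this forces the rank~$3$ locus of $C$ to be exactly $\{\pm\eta\}=C\cap X^p(x)$, giving the conclusion. I would argue by contradiction, assuming $c_0\not\parallel\eta$. Writing $\tilde x=\exp_p^{-1}(x)\in\sigma$ (well defined since $x\in U_p$) and $\tilde c_0$ for the preimage of $c_0$ under the isomorphism $\dd(\exp_p)_{\tilde x}|_\sigma$, the hypothesis $c_0\not\parallel\eta$ translates, via Lemma~\ref{lemma:foliate} and the injectivity of $\dd(\exp_p)|_\sigma$, into $\tilde c_0\not\parallel L_p$. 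The straight line through $\tilde x$ in direction $\tilde c_0$ then meets $L_p$ at a unique point $\tilde y$, and since $\Sigma$ is totally geodesic and flat its $\exp_p$-image is the geodesic $\gamma_{c_0}$, which consequently passes through the binding $\gamma_\xi=\exp_p(L_p)$ at $y=\exp_p(\tilde y)$. As rank is constant along geodesics, $\dot\gamma_{c_0}$ is a rank~$3$ direction at $y$; but $\dot\gamma_\xi$ is also a rank~$3$ direction at $y$, and the two are distinct because $\dd(\exp_p)|_\sigma$ is injective and $\tilde c_0\not\parallel\xi$. Since $y\in\gamma_\xi\subset\mathcal O$ is nonisotropic by Theorem~\ref{thm:summary}, the existence of two distinct rank~$3$ lines at $y$ contradicts Lemma~\ref{lemma:oneline}. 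Hence $c_0\in X^p(x)$.

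I expect the main obstacle to be this uniqueness argument in part~\eqref{item:patch1}: one must correctly use that $\exp_p$ restricts to a \emph{global} isometric immersion on $\sigma$, so that the transversality of the two lines in $\sigma$ persists as transversality of the geodesics $\gamma_{c_0}$ and $\gamma_\xi$ at $y$, together with the fact that rank is a geodesic invariant, in order to produce two distinct rank~$3$ lines at a single nonisotropic point. Part~\eqref{item:patch2}, by contrast, is a routine consequence of $\Sigma$ being a totally geodesic flat.
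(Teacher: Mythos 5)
Your proposal is correct, and part \eqref{item:patch2} takes a genuinely different route from the paper. The paper proves \eqref{item:patch2} only for rank $2$ vectors (the rank $3$ case being trivial): it verifies tangency at the single vector $\dot\gamma_v(t_0)\in C$, where $\gamma_v$ is the radial geodesic from $p$ through $x$, using the adapted frame $\{E_0,E_1,E_2\}$ so that $T_{\dot\gamma_v(t_0)}C=\spn\{E_1(t_0)\}\subset\mathcal R_{\dot\gamma_v(t_0)}$, and then propagates tangency around $C$ via the total geodesy of the rank distribution (Proposition~\ref{prop:totgeo-iso}). You instead verify tangency at \emph{every} $c\in C$ directly: since $\Sigma$ is a totally geodesic flat, $P_t(w_c)$ remains tangent to $\Sigma$ along $\gamma_c$ and spans with $\dot\gamma_c$ a flat plane, and Proposition~\ref{prop:signed} together with Lemma~\ref{lemma:crit} upgrades $\sec\bigl(\dot\gamma_c(t)\wedge P_t(w_c)\bigr)=0$ to $\mathcal J_{\dot\gamma_c(t)}\bigl(P_t(w_c)\bigr)=0$, so $P_t(w_c)$ is a parallel Jacobi field. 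This is more self-contained, and it buys a genuine improvement in logical hygiene: by proving \eqref{item:patch2} first, the hypothesis of Corollary~\ref{cor:nocircles} ($C$ everywhere tangent to $\mathcal R$) is established before the corollary is invoked, whereas the paper applies the corollary within part \eqref{item:patch1} with that hypothesis only implicit. Your uniqueness argument in \eqref{item:patch1} is essentially the paper's: a rank $3$ vector $c_0\notin X^p(x)$ yields a transversal intersection $y$ of $\gamma_{c_0}$ with $\gamma_\xi(\R)$ carrying two distinct rank $3$ lines, forcing $y\in\mathcal I$ by Lemma~\ref{lemma:oneline} and contradicting $\gamma_\xi(\R)\subset\mathcal O$ from Theorem~\ref{thm:summary}; your extra care in running the transversality upstairs in $\sigma$ is in fact the right way to treat the merely immersed $\Sigma$, which the paper glosses over. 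One cosmetic slip: $\exp_p^{-1}(x)$ need not be well defined, since the lemma does not place $x$ in $U_p$ and $\exp_p|_\sigma$ is only an immersion; but your argument never uses uniqueness of the preimage --- simply fix any $\tilde x\in\sigma$ with $\exp_p(\tilde x)=x$ and $T_x\Sigma=\dd(\exp_p)_{\tilde x}(\sigma)$, and note that injectivity of $\dd(\exp_p)|_\sigma$ at the intersection point, which holds because $\exp_p|_\sigma$ is an isometric immersion, is all that is needed.
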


\begin{proof}
By Lemma \ref{lemma:foliate}, the restriction of $X^{p}$ to $\Sigma$ is tangent to a foliation by lines parallel to $\gamma_{\xi}(\R)$. By Corollary~\ref{cor:nocircles}, $C$ does not consist entirely of rank $2$ vectors. If a rank $3$ vector $w \in C$ is not tangent to the line $X^{p}(x)$, then the two nonparallel lines $\gamma_{\xi}(\R)$ and $\exp_x(\R \cdot w)$ must intersect transversally at some point $y$.  This point $y$ is isotropic by Lemma~\ref{lemma:oneline}, contradicting the fact that $y \in \gamma_\xi(\R) \subset \mathcal{O}$. Thus, the rank $3$ vectors in $C$ are contained in $X^{p}(x)\cap C$. By definition, the rank of a unit vector $v$ coincides with the rank of $-v$, concluding the proof of \eqref{item:patch1}.

It remains to prove \eqref{item:patch2} for all rank $2$ vectors $c \in C$. As $x \in \Sigma$, there exists a unit vector $v \in S_p M \setminus \{\pm \xi\}$ and $t_0>0$ such that $\gamma_{v}(t_0)=x$. Let $\{E_0,E_1,E_2\}$ denote the adapted frame along $\gamma_v$ and consider the rank $2$ vector $E_0(t)=\dot{\gamma_v}(t_0) \in C$. Then $T_{E_0(t)}C$ is obtained by parallel translating in $T_x \Sigma$ the subspace spanned by $E_1(t)$. Thus, $E_0(t)$ satisfies \eqref{item:patch2}, and hence by Proposition~\ref{prop:totgeo-iso}, so do all rank $2$ vectors in $C$.
\end{proof}

\begin{proof}[Proof of Theorem~\ref{thm:A}]
If $\widetilde M$ splits isometrically as a product, then $M$ clearly has higher rank. Conversely, assume $M$ is a complete higher rank $3$-manifold. If $M$ consists entirely of isotropic points, then $M$ is flat and hence its universal covering is isometric to the Euclidean space $\R^3$.
Hence, assume $M$ has nonisotropic points. By the de Rham decomposition theorem, it suffices to construct a parallel line field $X$ on $M$. Let $B \subset \mathcal{O}$ denote a small metric ball in $M$. By Lemma~\ref{lemma:cover}, the open subsets $U_p=M \setminus \cut(p)$, $p \in B$, cover $M$. Propositions~\ref{prop:keyprop} and \ref{prop:keyprop2} guarantee that the line field $X^{p}$ on $U_p$ given by \eqref{eq:xp} is parallel and agrees with the line field $L$ at nonisotropic points.

We claim that $X^{p_1}=X^{p_2}$ on $U_{p_1}\cap U_{p_2}$ for any $p_1,p_2 \in B$. To prove the claim, let $x \in U_{p_1}\cap U_{p_2}$. If $x \in \mathcal{O}$, then $X^{p_1}(x)=L_x=X^{p_2}(x)$, by Proposition \ref{prop:keyprop}. Hence, assume $x \in \mathcal{I}$. As the geodesic $\exp_{p_i}(L_{p_i})$ consists entirely of nonisotropic points, there exist unique $v_x^i \in S_{p_i}M \setminus \{L_{p_i}\cap S_{p_i}M\}$ and $t_{x}^i \in (0,\mu(v_x^i))$ such that $\gamma_{v_x^i}(t_x^i)=x$, $i=1,2$. Consider the $2$-planes $\sigma_i:=\spn\{\xi_{p_i},v_x^i\}$ of $T_{p_i}M$, $i=1,2$. By Proposition~\ref{prop:flats}, $\Sigma_i=\exp_{p_i}(\sigma_i)$ is a totally geodesic flat immersed submanifold in $M$ that contains $p_i$ and $x$. Let $C_i$ denote the great circle $T_x \Sigma_i \cap S_xM$. If $C_1=C_2$ then Lemma \ref{lemma:patch} \eqref{item:patch1} implies that $X^{p_1}(x)=X^{p_2}(x)$. Otherwise, $C_1$ and $C_2$ must intersect transversally in a pair of antipodal vectors in $S_xM$. These antipodal vectors have rank $3$ by Lemma~\ref{lemma:patch} \eqref{item:patch2}. Lemma~\ref{lemma:patch} \eqref{item:patch1} then implies that $X^{p_1}(x)=X^{p_2}(x)$, concluding the proof of the claim.

By the above, there is a line field $X$ on $M$ whose restriction to $U_p$ agrees with $X^p$, for any $p\in B$. We conclude by showing that $X$ is parallel on $M$. Let $\tau\colon [0,1]\to M$ be a smooth curve and denote parallel translation along $\tau$ by
\begin{equation*}
P^{t_2}_{t_1}\colon T_{\tau(t_1)} M \to T_{\tau(t_2)}M
\end{equation*}
Set $X(t):=X(\tau(t))$ and $A:=\{t \in [0,1] : P^{t}_{0}(X(0))=X(t)\}$.  We must show that $1 \in A$. As $P^{0}_{0}=\Id$, $0 \in A$. Continuity of parallel translation implies that $A$ is closed. To see that $A$ is also open, pick $t_0 \in A$. By the covering property, there exists $p \in B$ and $\varepsilon>0$ such that $\tau\big((t_0-\varepsilon,t_0+\varepsilon)\big)\subset U_p$. Let $s \in (t_0-\varepsilon,t_0+\varepsilon)$. Using that $t_0 \in A$, that $X^p$ is parallel and that $X$ restricts to $X^p$ on $U_p$, we have:
\begin{equation*}
X(s)=X^{p}\big(\tau(s)\big)=P^{s}_{t_0}\big(X^{p}(\tau(t_0))\big)= P^{s}_{t_0}\big(X(t_0)\big)=P^{s}_{t_0}\big(P_{0}^{t_0}(X(0))\big)=P_{0}^{s}(X(0)),
\end{equation*}
hence $(t_0-\varepsilon,t_0+\varepsilon)\subset A$. Therefore $A=[0,1]$, concluding the proof.
\end{proof}

\section{\texorpdfstring{Gluing constructions of manifolds with $\cvc(0)$}{Gluing constructions of manifolds with cvc(0)}}\label{sec:graphmflds}

In this section, we describe metrics with $\cvc(0)$ and pointwise signed sectional curvatures on both closed and open $3$-manifolds, via gluing constructions. While all of these examples have a local product decomposition, many have irreducible universal covering.

\subsection{Closed examples}
Graph manifolds were first considered by Waldhausen~\cite{wald} in the 1960s and have since been used to construct important examples in various contexts, most notably by Gromov~\cite{gromov} and Cheeger and Gromov~\cite{cheeger-gromov}. These are $3$-manifolds obtained by gluing circle bundles over surfaces with boundary. 

\begin{definition}\label{def:graphmfld}
Consider finitely many surfaces $\Sigma_i^2$ whose boundary is a disjoint union of circles $\partial \Sigma_i^2=\bigcup_{j=1}^{N(i)} S^1_{i,j}$. The boundary components of the product manifold $M_i:=\Sigma_i^2\times S^1_i$ are tori $S^1_{i,j}\times S^1_i$, $1\leq j\leq N(i)$. A \emph{graph manifold} $M=\bigcup_i M_i$ is a $3$-manifold obtained by gluing the $M_i$ together along pairs of boundary tori, which are identified via an orientation reversing diffeomorphism.
\end{definition}

Note that by multiplying any element $A\in\SL(2,\Z)$ on the left with the reflection
\begin{equation*}
\tau=\begin{pmatrix}
-1 & 0\\
0 & 1
\end{pmatrix},
\end{equation*}
we obtain an orientation reversing diffeomorphism $\tau A\colon\R^2\to\R^2$ that leaves invariant the integer lattice $\Z^2$. Thus, $\tau A$ descends to an orientation reversing diffeomorphism $\varphi_A$ of the torus $T^2=\R^2/\Z^2$. Conversely, any orientation reversing diffeomorphism $\varphi\colon T^2\to T^2$ is isotopic to $\varphi_A$ for some $A\in\SL(2,\Z)$. 

The information necessary to define a graph manifold can be conveniently organized in the form of a graph, hence the name. Each $M_i$ corresponds to a vertex of this graph, labeled with the surface $\Sigma_i$. There are $N(i)$ edges that issue from this vertex, each decorated with an element of $\SL(2,\Z)$ that encodes the respective gluing map, as explained above.

\begin{convention}
Every torus boundary component of $M_i=\Sigma_i^2\times S^1_i$ is written as $S^1_{i,j}\times S^1_i$, meaning that the first factor $S^1_{i,j}$ is a part of $\partial\Sigma_i^2$ and the second factor $S^1_i$ is the circle fiber of the trivial bundle $S^1\to M_i\to\Sigma_i^2$.
\end{convention}

Consider the cyclic subgroup of order $4$ of $\SL(2,\Z)$ generated by the $90^\circ$ rotation
\begin{equation*}
R_{\frac\pi2}:=\begin{pmatrix}
0 & -1\\
1 & 0
\end{pmatrix}, \quad \left\langle R_{\frac\pi2} \right\rangle\cong\Z_4\subset\SL(2,\Z).
\end{equation*}
We now analyze gluing maps given by orientation reversing diffeomorphisms of $T^2=\R^2/\Z^2$ which are induced by the orientation reversing diffeomorphisms
\begin{equation}\label{eq:AandB}
A:=\tau R_{\frac\pi2}^2=-\tau  R_{\frac\pi2}^4=\begin{pmatrix}
1 & 0\\
0 & -1
\end{pmatrix}\quad \mbox{ and }\quad B:=\tau R_{\frac\pi2}=-\tau R_{\frac\pi2}^3=\begin{pmatrix}
0 & 1\\
1 & 0
\end{pmatrix}.
\end{equation}
With the above convention, if the gluing map $\varphi\colon S^1_{i_1,j_1}\times S^1_{i_1}\to S^1_{i_2,j_2}\times S^1_{i_2}$ between torus boundary components of $M_{i_1}$ and $M_{i_2}$ is $\varphi_A$, then the circle components of $\partial \Sigma_{i_1}$ and $\partial\Sigma_{i_2}$ are identified with one another, and so are the circle fibers $S^1_{i_1}$ and $S^1_{i_2}$.  However, with the gluing map $\varphi_B$, the circle component of $\partial\Sigma_{i_1}$ is identified with the circle fiber $S^1_{i_2}$ and the circle component of $\partial\Sigma_{i_2}$ is identified with the circle fiber $S^1_{i_1}$. In other words, $\varphi_A$ is a \emph{trivial} gluing map that preserves vertical and horizontal directions of $S^1\to M_i\to\Sigma_i^2$, while $\varphi_B$ is a gluing map that interchanges vertical and horizontal directions.

\begin{proposition}\label{prop:graphcvc0}
If $M$ is a graph manifold all of whose gluing maps are the diffeomorphisms $\varphi_A$ or $\varphi_B$ induced by \eqref{eq:AandB}, then $M$ admits a metric with $\cvc(0)$ and pointwise signed sectional curvatures.
\end{proposition}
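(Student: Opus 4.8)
The plan is to equip each building block $M_i=\Sigma_i^2\times S^1_i$ with a product metric that is \emph{flat in a collar} of every boundary torus, and then normalize the boundary data so that all the gluing maps $\varphi_A,\varphi_B$ act as isometries. The starting observation is that \emph{any} product metric $\h+\dd s^2$ on $\Sigma^2\times S^1$ already enjoys the two desired properties. For pointwise signed curvatures, note that $\{e_1,e_2,\partial_s\}$ (with $e_1,e_2$ tangent to $\Sigma$) diagonalizes the Ricci tensor, so the computation in the proof of Proposition~\ref{prop:signed} applies: since $\sec(e_2\wedge\partial_s)=\sec(\partial_s\wedge e_1)=0$ and $\sec(e_1\wedge e_2)=K_\Sigma(p)$, every plane $\sigma$ at $p$ satisfies $\sec(\sigma)=c^2\,K_\Sigma(p)$ for the $\partial_s$-component $c$ of its unit normal, and hence all sectional curvatures at $p$ share the sign of $K_\Sigma(p)$. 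For $\cvc(0)$, observe that every $v\in T_pM_i$ lies in a $2$-plane containing the fiber direction $\partial_s$, and every such plane is flat because $\partial_s$ is a parallel flat factor.

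Next I would fix the metric data. Give each fiber $S^1_i$ length $1$, and choose on each surface $\Sigma_i$ a metric that near every boundary circle is isometric to the flat half-cylinder $S^1(1)\times[0,\varepsilon)$ with geodesic boundary of length $1$; such metrics exist on any compact surface with boundary, by prescribing a flat collar and filling in the interior arbitrarily. No Gauss--Bonnet obstruction arises, since the interior curvature is unconstrained in sign and magnitude and only the collars are required to be flat. With these normalizations, a collar of each boundary torus of $M_i$ is isometric to the flat product $(\R^2/\Z^2)\times[0,\varepsilon)$.

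The gluings then cause no difficulty. The matrices $A$ and $B$ in \eqref{eq:AandB} are orthogonal and preserve the lattice $\Z^2$, so $\varphi_A$ and $\varphi_B$ are isometries of the standard flat square torus $\R^2/\Z^2$. Gluing two flat collars $(\R^2/\Z^2)\times[0,\varepsilon)$ by such an isometry, together with reversal of the collar parameter, yields a flat slab $(\R^2/\Z^2)\times(-\varepsilon,\varepsilon)$; in particular the glued metric is smooth---indeed flat---across every gluing torus. Thus all the length-matching constraints imposed by the edges are met simultaneously by the single choice ``all relevant lengths equal $1$'', so no global consistency problem can arise around cycles of the graph. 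This applies equally to the $\varphi_B$-edges, which interchange the boundary-circle and fiber directions, because on the flat square torus both framings have the same length.

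Finally I would verify the conclusion on the resulting closed $3$-manifold $M$. Away from the gluing tori, $M$ is locally a product $\Sigma_i\times S^1_i$, where both properties hold by the first paragraph; near the gluing tori the metric is flat, so every point is isotropic and the two properties hold trivially. The step I expect to require the most care is the first one---producing the flat-collar product metrics with prescribed boundary lengths and checking smoothness of the seam---since once the collars are genuinely flat the gluing region is automatically flat, and the verification of $\cvc(0)$ and of pointwise signed curvatures reduces to the elementary product computation above. It is worth noting that $\varphi_B$ sends the flat fiber line $\spn\{\partial_s\}$ of one side to a surface direction of the other, so the locally defined flat-plane line fields need not match across such a seam; this is harmless for $\cvc(0)$, but is precisely the mechanism that prevents these metrics from having a globally parallel line field.
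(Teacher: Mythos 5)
Your proposal is correct and follows essentially the same route as the paper: product metrics on each block $M_i=\Sigma_i^2\times S^1_i$ with flat product collars and unit-length boundary circles and fibers, glued via $\varphi_A,\varphi_B$ acting as isometries of the square flat torus. Your explicit verification of pointwise signed curvatures and of $\cvc(0)$ via planes containing $\partial_s$ simply spells out what the paper leaves as ``clear,'' so the two arguments coincide in substance.
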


\begin{proof}
In the notation of Definition~\ref{def:graphmfld}, endow the surfaces $\Sigma_i^2$ with any smooth metric that restricts to a product metric on a collar neighborhood of each component of the boundary $\partial \Sigma_i^2=\bigcup_{j=1}^{N(i)} S^1_{i,j}$. Without loss of generality, assume this is done in such way that each $S^1_{i,j}$ is a circle of unit length. Then, consider $M_i=\Sigma^2_i\times S^1_i$ endowed with the product metric. The gluing maps $\varphi_A$ and $\varphi_B$ are isometries of the square torus, and hence the above metrics on $M_i$ can be glued together. The resulting metric on $M$ clearly has $\cvc(0)$, since every point has a neighborhood isometric to a product.
\end{proof}

\begin{figure}[htf]
\centering
\vspace{-0.5cm}
\includegraphics[scale=0.6]{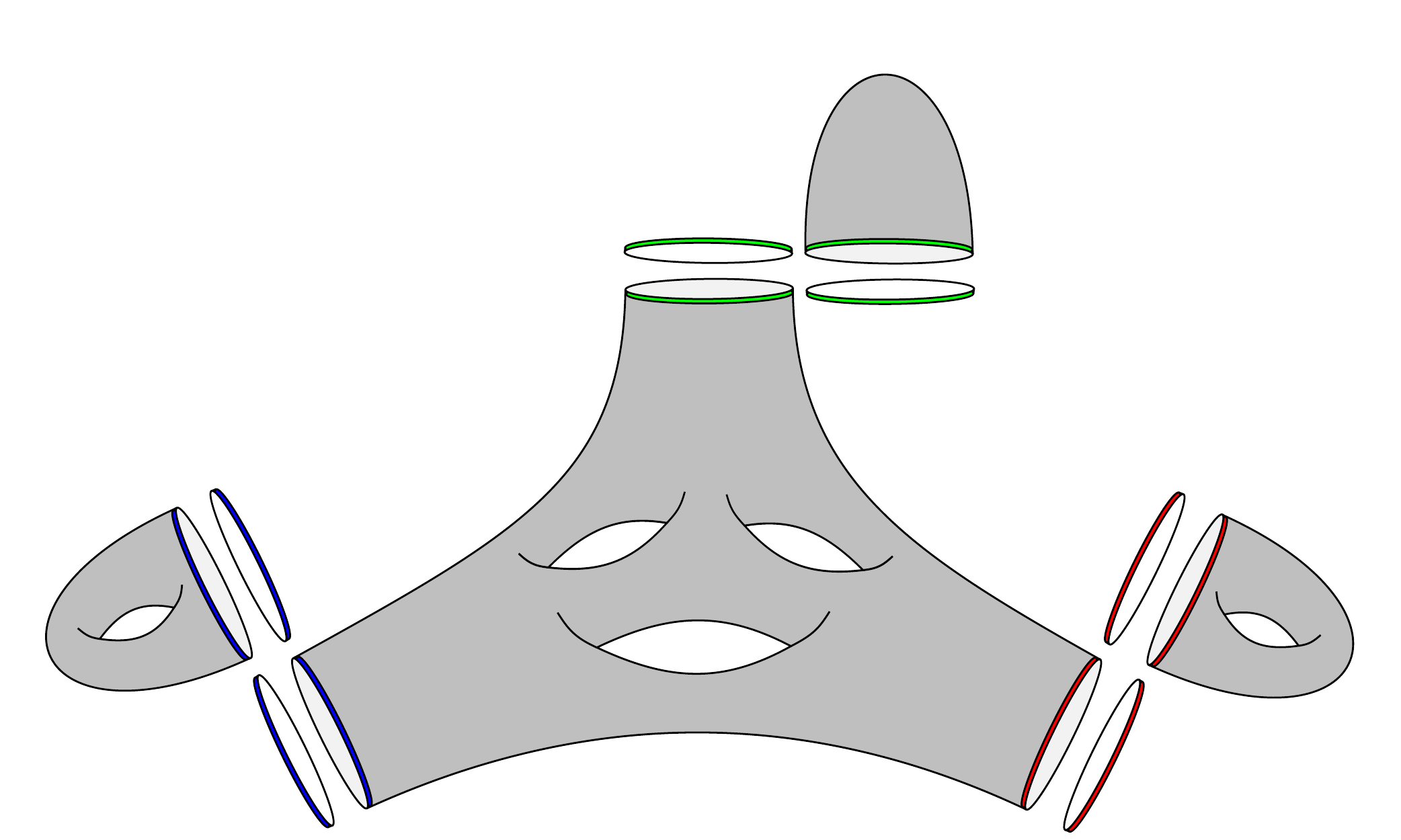} 
\begin{pgfpicture}
\pgfputat{\pgfxy(-6,2.8)}{\pgfbox[center,center]{$\Sigma_1$}}
\pgfputat{\pgfxy(-4.25,3.9)}{\pgfbox[center,center]{$S^1_{2}$}}
\pgfputat{\pgfxy(-4.85,3.75)}{\pgfbox[center,center]{$S^1_{1,1}$}}
\pgfputat{\pgfxy(-3.25,0.2)}{\pgfbox[center,center]{$S^1_{1}$}}
\pgfputat{\pgfxy(-2.65,0.4)}{\pgfbox[center,center]{$S^1_{2,1}$}}
\pgfputat{\pgfxy(0.1,0.9)}{\pgfbox[center,center]{$\Sigma_2$}}
\pgfputat{\pgfxy(2.6,7.2)}{\pgfbox[center,center]{$\Sigma_3$}}
\pgfputat{\pgfxy(3,5.8)}{\pgfbox[center,center]{$S^1_{3,1}$}}
\pgfputat{\pgfxy(2.9,5.3)}{\pgfbox[center,center]{$S^1_{2}$}}
\pgfputat{\pgfxy(-1.1,5.9)}{\pgfbox[center,center]{$S^1_{3}$}}
\pgfputat{\pgfxy(-1.1,5.4)}{\pgfbox[center,center]{$S^1_{2,2}$}}
\pgfputat{\pgfxy(6.1,2.8)}{\pgfbox[center,center]{$\Sigma_4$}}
\pgfputat{\pgfxy(4.5,3.8)}{\pgfbox[center,center]{$S^1_{2}$}}
\pgfputat{\pgfxy(5.1,3.6)}{\pgfbox[center,center]{$S^1_{4,1}$}}
\pgfputat{\pgfxy(3.5,0.15)}{\pgfbox[center,center]{$S^1_{4}$}}
\pgfputat{\pgfxy(2.9,0.3)}{\pgfbox[center,center]{$S^1_{2,3}$}}
\end{pgfpicture}
\vspace{-0.1cm}
\caption{A graph manifold $M=\bigcup_{i=1}^4 M_i$ with $\cvc(0)$. The corresponding minimal graph decomposition has one central vertex labeled $\Sigma_2$ with three edges, each terminating on a vertex labeled $\Sigma_1$, $\Sigma_3$ or $\Sigma_4$.}
\label{figure:graphmanifold}
\end{figure}


In the above construction, suppose that the pieces $M_{i_1}$ and $M_{i_2}$ of a graph manifold $M$ only share one torus boundary component, which is identified using the \emph{trivial} gluing map $\varphi_A$. Then $M_{i_1}\cup M_{i_2}$ is isometric to $(\Sigma_{i_1}\cup\Sigma_{i_2})\times S^1$, and hence we could have started with a smaller decomposition of $M$ as a graph manifold, in which $M_{i_1}$ and $M_{i_2}$ are already glued together. Thus, for the purpose of constructing $\cvc(0)$ metrics via Proposition~\ref{prop:graphcvc0}, we may consider a \emph{minimal graph decomposition}, in which all edges corresponding to the gluing map $\varphi_A$ are collapsed, and all remaining edges correspond to the gluing map $\varphi_B$. See Figures~\ref{figure:graphmanifold} and \ref{figure:s3} for examples of graph manifolds with $\cvc(0)$ presented by their minimal graph decomposition.

Note that if the minimal graph decomposition of a graph manifold $M$ with $\cvc(0)$ consists of only one vertex, then the universal covering of $M$ splits isometrically as a product. However, in general, these graph manifolds can have irreducible universal covering. Although such manifolds admit a metric with $\cvc(0)$ and pointwise signed sectional curvatures, which is a \emph{pointwise} notion of higher rank, it follows from Theorem~\ref{thm:A} that they do not admit metrics of higher rank. In particular, we have:

\begin{corollary}\label{cor:spherecvc0}
The sphere $S^3$ and all lens spaces $L(p;q)\cong S^3/\Z_p$ admit metrics with $\cvc(0)$ and $\sec\geq0$.
\end{corollary}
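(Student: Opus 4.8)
The plan is to handle $S^3$ directly via the graph-manifold construction and then obtain every lens space as an isometric quotient. As indicated in Figure~\ref{figure:s3}, the sphere $S^3$ is the graph manifold $V_1\cup_{\varphi_B}V_2$ given by its genus-one Heegaard splitting, where $V_1,V_2\cong D^2\times S^1$ are solid tori glued along their common boundary torus by the swap $\varphi_B$ from \eqref{eq:AandB}. I would run the construction of Proposition~\ref{prop:graphcvc0} on this decomposition, but choosing the surface metrics more carefully: endow each disk $D^2$ with a rotationally symmetric metric of nonnegative Gauss curvature that is isometric to a flat cylinder $S^1\times[0,\varepsilon)$ of unit-length circles near $\partial D^2$, for instance a smoothed spherical cap realized as a surface of revolution. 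In the product metric on $M_i=D^2\times S^1$, the curvature tensor is the sum of those of the factors, so the only plane with possibly nonzero sectional curvature is the one tangent to the disk, whose curvature equals the Gauss curvature; hence $\sec\geq0$ on each $M_i$, while every tangent vector lies in a flat mixed plane containing the $S^1$-fiber, giving $\cvc(0)$ exactly as in Proposition~\ref{prop:graphcvc0}. Since $\varphi_B$ is an isometry of the flat unit square torus and both pieces are flat products near the gluing torus, the glued metric is smooth and has $\sec\geq0$ and $\cvc(0)$; this settles the case of $S^3$.

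For the lens spaces, the key observation is that this metric on $S^3$ is invariant under a two-torus of isometries. Rotating the $S^1$-fiber of $V_1$ is an isometry of $V_1$, and under $\varphi_B$ it acts on $V_2$ as a rotation of the (rotationally symmetric) disk factor, hence is an isometry of $V_2$ as well; the same holds for the fiber rotation of $V_2$. These two circle actions commute and assemble into a $T^2$-action by isometries. I would then identify this $T^2$ with the standard torus acting on $S^3=\{(z_1,z_2):|z_1|^2+|z_2|^2=1\}$ by $(z_1,z_2)\mapsto(e^{is}z_1,e^{it}z_2)$, for which $V_1=\{|z_1|\leq|z_2|\}$ and $V_2=\{|z_1|\geq|z_2|\}$ are precisely the two solid tori of the Heegaard splitting, and the gluing interchanges fiber and disk directions as prescribed by $\varphi_B$.

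With this identification, every lens space arises as an isometric quotient: $L(p;q)=S^3/\Z_p$, where $\Z_p$ is generated by $(z_1,z_2)\mapsto(\zeta z_1,\zeta^q z_2)$ with $\zeta=e^{2\pi i/p}$ and $\gcd(p,q)=1$, is exactly the quotient by the finite subgroup $\langle(2\pi/p,2\pi q/p)\rangle\subset T^2$, which acts freely. As the covering $S^3\to L(p;q)$ is then a local isometry, the quotient metric is smooth, and both $\sec\geq0$ and $\cvc(0)$---being local isometry invariants---descend to $L(p;q)$. I expect the main obstacle to be verifying that the glued metric genuinely carries the full $T^2$ of isometries compatible with $\varphi_B$ (which is why the rotational symmetry of the cap metric is essential) and matching this torus with the standard one, so that all lens spaces $L(p;q)$ are realized rather than only finitely many. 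Indeed, a direct graph-manifold realization cannot achieve this, since the gluing maps $\varphi_A,\varphi_B$ generate only a finite dihedral subgroup of $\mathsf{GL}(2,\Z)$ and hence produce only finitely many lens spaces; this is precisely why passing to quotients of the symmetric $S^3$ example is the natural route.
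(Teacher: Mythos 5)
Your proposal is correct and takes essentially the same route as the paper: the genus-one Heegaard splitting of $S^3$ glued by $\varphi_B$ with rotationally symmetric, nonnegatively curved disk metrics as in Proposition~\ref{prop:graphcvc0}, followed by the observation that the resulting metric is invariant under the standard $T^2$-action on $S^3\subset\C^2$ and hence descends to $L(p;q)$ via the free $\Z_p$-subaction generated by $(e^{2\pi i/p},e^{2\pi i q/p})$. Your added verifications (the isometric $T^2$-compatibility with $\varphi_B$, and the remark that direct graph-manifold gluings alone cannot reach all lens spaces) match the paper's own construction and its subsequent remark on $L(p;q)$ as a graph manifold.
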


\begin{proof}
The genus $1$ Heegaard decomposition of $S^3$ provides a minimal graph decomposition, consisting of two vertices connected by one edge. More precisely, in this decomposition $S^3=M_1\cup M_2$, where $\Sigma_i^2=D^2_i$ are disks and the gluing map is $\varphi_B\colon \partial D^2_1\times S^1_1\to \partial D^2_2\times S^1_2$, see Figure~\ref{figure:s3}. Choosing metrics on $D^2_i$ that have $\sec\geq0$, we obtain (as in the proof of Proposition~\ref{prop:graphcvc0}) a metric on $S^3$ with $\cvc(0)$ and $\sec\geq0$.
\begin{figure}[htf]
\centering
\vspace{0.2cm}
\includegraphics[scale=0.4]{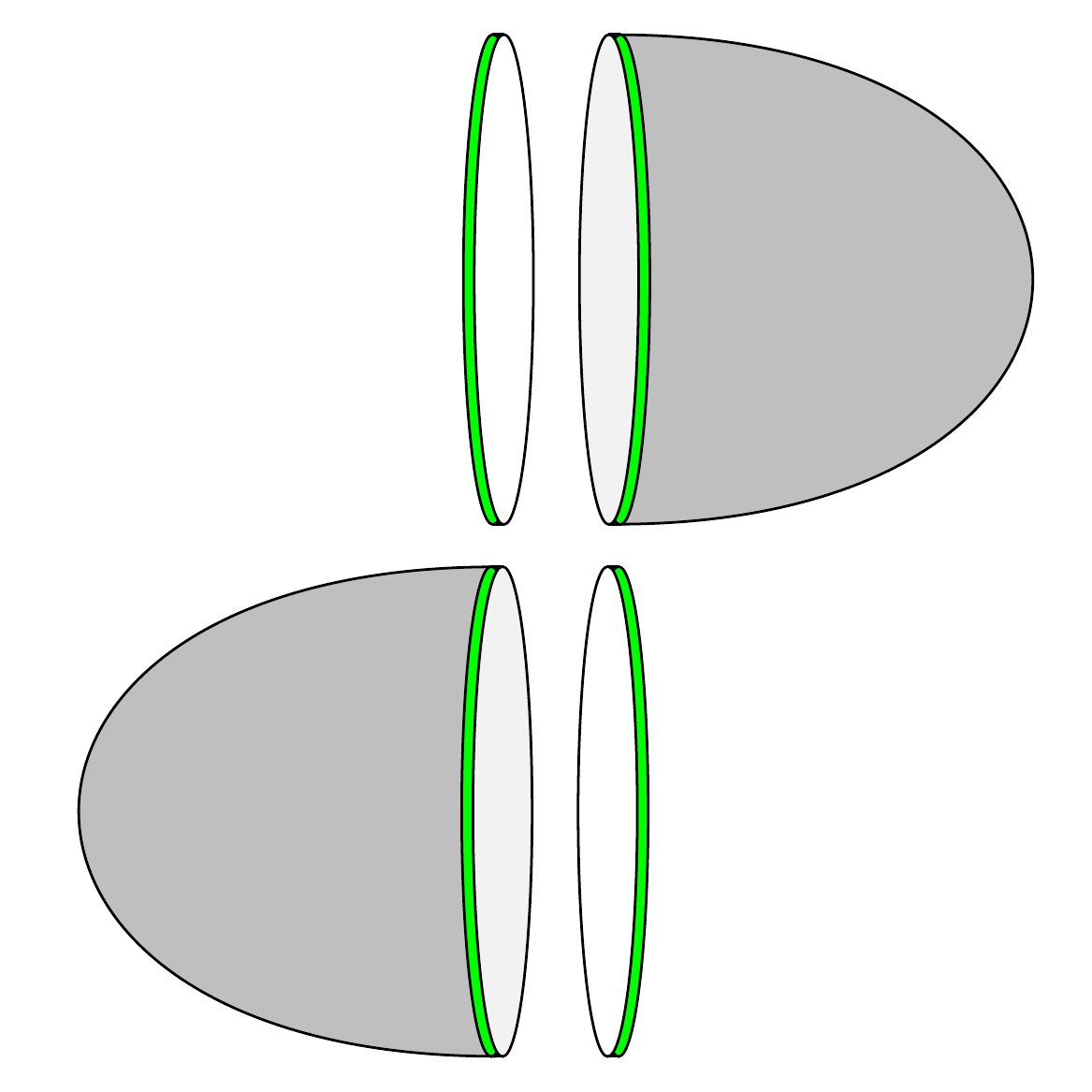}
\begin{pgfpicture}
\pgfputat{\pgfxy(-2.8,4.9)}{\pgfbox[center,center]{$S^1_1$}}
\pgfputat{\pgfxy(-2,4.87)}{\pgfbox[center,center]{$S^1_{2,1}$}}
\pgfputat{\pgfxy(-2.8,-0.153)}{\pgfbox[center,center]{$S^1_{1,1}$}}
\pgfputat{\pgfxy(-2.1,-0.15)}{\pgfbox[center,center]{$S^1_2$}}
\pgfputat{\pgfxy(-5,1.2)}{\pgfbox[center,center]{$D^2_1$}}
\pgfputat{\pgfxy(0.1,3.55)}{\pgfbox[center,center]{$D^2_2$}}
\end{pgfpicture}
\vspace{0.2cm}
\caption{The sphere $S^3$ seen as a graph manifold.}
\label{figure:s3}
\end{figure}
Furthermore, if the metrics on $D^2_i$ are rotationally symmetric, then the resulting metric on $S^3$ is invariant under the $T^2$-action $(e^{i\theta},e^{i\phi})\cdot (z,w)=(e^{i\theta}z,e^{i\phi}w)$, where $(e^{i\theta},e^{i\phi})\in T^2$ and $S^3=\{(z,w)\in\C^2:|z|^2+|w|^2=1\}$. In particular, this metric is also invariant under the subaction of the cyclic subgroup of order $p$ of $T^2$ generated by $(e^{2\pi i/p},e^{2\pi i q/p})$, where $\gcd(p,q)=1$. Thus, it descends to a metric with $\cvc(0)$ and $\sec\geq0$ on the quotient, which is the lens space $L(p;q)$.
\end{proof}

\begin{remark}
The lens space $L(p;q)$ is itself a graph manifold, obtained from gluing together two solid tori $M_1=D^2_1\times S^1_1$ and $M_2=D^2_2\times S^1_2$ using the orientation reversing diffeomorphism $\varphi\colon T^2\to T^2$ induced by
\begin{equation*}
\begin{pmatrix}
-q & r\\
p & s
\end{pmatrix}
\end{equation*}
where $r,s\in\Z$ are such that $pr+qs=1$. Since the above is not an isometry of the square torus $T^2=\R^2/\Z^2$, we cannot directly apply the construction of Proposition~\ref{prop:graphcvc0}. However, a direct construction of $\cvc(0)$ metrics on $L(p;q)$ in this framework is still possible, using \emph{twisted cylinders} and more general gluing maps~\cite{florit-ziller}.
\end{remark}


\begin{corollary}\label{cor:s2s1}
The product manifold $S^2\times S^1$ admits metrics with $\cvc(0)$ and pointwise signed sectional curvatures that do not have higher rank.
\end{corollary}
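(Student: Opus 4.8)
The plan is to realize $S^2\times S^1$ as a graph manifold with a nontrivial minimal graph decomposition, apply Proposition~\ref{prop:graphcvc0} to equip it with a $\cvc(0)$ metric having pointwise signed sectional curvatures, and then use Theorem~\ref{thm:A} to rule out higher rank by exhibiting the incompatibility of the local product structures across the gluings.

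First I would set up the decomposition. Let $\Sigma_1$ and $\Sigma_3$ be disks and let $\Sigma_2$ be an annulus $S^1_a\times[0,1]$, so that $M_2=\Sigma_2\times S^1_b$ has two boundary tori $S^1_a\times\{0,1\}\times S^1_b$. Glue the solid tori $M_1=D^2_1\times S^1_1$ and $M_3=D^2_3\times S^1_3$ onto these two boundary tori using the gluing map $\varphi_B$ induced by \eqref{eq:AandB}. Since $\varphi_B$ identifies each meridian $\partial D^2_i$ with the fiber $S^1_b$ of $M_2$, each solid torus caps off the circle $S^1_b$ with a disk; capping the cylinder $[0,1]\times S^1_b$ at both ends produces a $2$-sphere, and the surviving $S^1_a$ factor shows the resulting closed $3$-manifold is $S^2\times S^1$. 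As both edges of this decomposition carry the nontrivial gluing $\varphi_B$, it is already a minimal graph decomposition, now with three vertices.

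Second, I would invoke Proposition~\ref{prop:graphcvc0} to obtain a metric on $M=S^2\times S^1$ with $\cvc(0)$ and pointwise signed sectional curvatures, choosing the metrics on $\Sigma_1,\Sigma_2,\Sigma_3$ to be products near their boundary circles but with a nonempty region of nonzero Gauss curvature in each interior (for instance, spherical caps on the disks and a curved bump on the annulus). For the product metric on $M_i=\Sigma_i\times S^1_i$, the only nonzero sectional curvatures occur on planes meeting $T\Sigma_i$ nondegenerately, so the nonisotropic points of $M_i$ are exactly those lying over the curved region of $\Sigma_i$, and at such a point the flat-planes line field $L$ of Lemma~\ref{lemma:cvc} points along the fiber direction $S^1_i$.

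Finally, to show this metric has no higher rank I would argue by contradiction through Theorem~\ref{thm:A}: if $M$ had higher rank, the construction in the proof of Theorem~\ref{thm:A} would produce a globally defined parallel line field $X$ on $M$ agreeing with $L$ on the nonisotropic set. In the $M_1$ (or $M_3$) piece this forces $X$ to point along $S^1_1$, which $\varphi_B$ identifies with the base direction $S^1_a$, whereas in the $M_2$ piece it forces $X$ to point along the orthogonal fiber direction $S^1_b$. These two nonisotropic regions are separated by the flat collar $T^2\times[0,1]$ of isotropic points arising from the product collars, on which parallel transport is trivial; a parallel $X$ would therefore have to keep a constant direction across it, contradicting the $90^\circ$ mismatch between $S^1_a$ and $S^1_b$. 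Hence no such $X$ exists and $M$ does not have higher rank. The main obstacle I anticipate is the careful bookkeeping in the first step --- tracking precisely which circle factors $\varphi_B$ identifies and confirming that capping both ends of $\Sigma_2\times S^1_b$ reproduces $S^2\times S^1$ rather than a lens space or a torus bundle --- together with making rigorous the passage from higher rank to a parallel line field compatible with $L$, which is exactly the content already established in the proof of Theorem~\ref{thm:A}.
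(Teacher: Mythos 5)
Your proposal is correct and follows essentially the same route as the paper: the identical minimal graph decomposition of $S^2\times S^1$ into two solid tori and a cylinder piece glued by $\varphi_B$, the metric from Proposition~\ref{prop:graphcvc0} with non-flat interior regions on each $\Sigma_i$, and a contradiction from the $90^\circ$ mismatch of the line field $L$ (tangent to $S^1_1$ near nonisotropic points of $M_1$, to $S^1_2$ near those of $M_2$) across the gluing. The only cosmetic difference is that the paper phrases the final step as the failure of $L$ to be parallel along a single geodesic joining two nonisotropic points, while you invoke the globally parallel line field produced in the proof of Theorem~\ref{thm:A}; both rest on the same incompatibility and are equally valid.
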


\begin{proof}
Consider the graph manifold whose minimal graph decomposition is obtained from the minimal graph decomposition of $S^3$ mentioned above by adding one vertex along the edge between the two original vertices, see Figure~\ref{figure:s2s1}.
\begin{figure}[htf]
\centering
\vspace{0.1cm}
\includegraphics[scale=0.4]{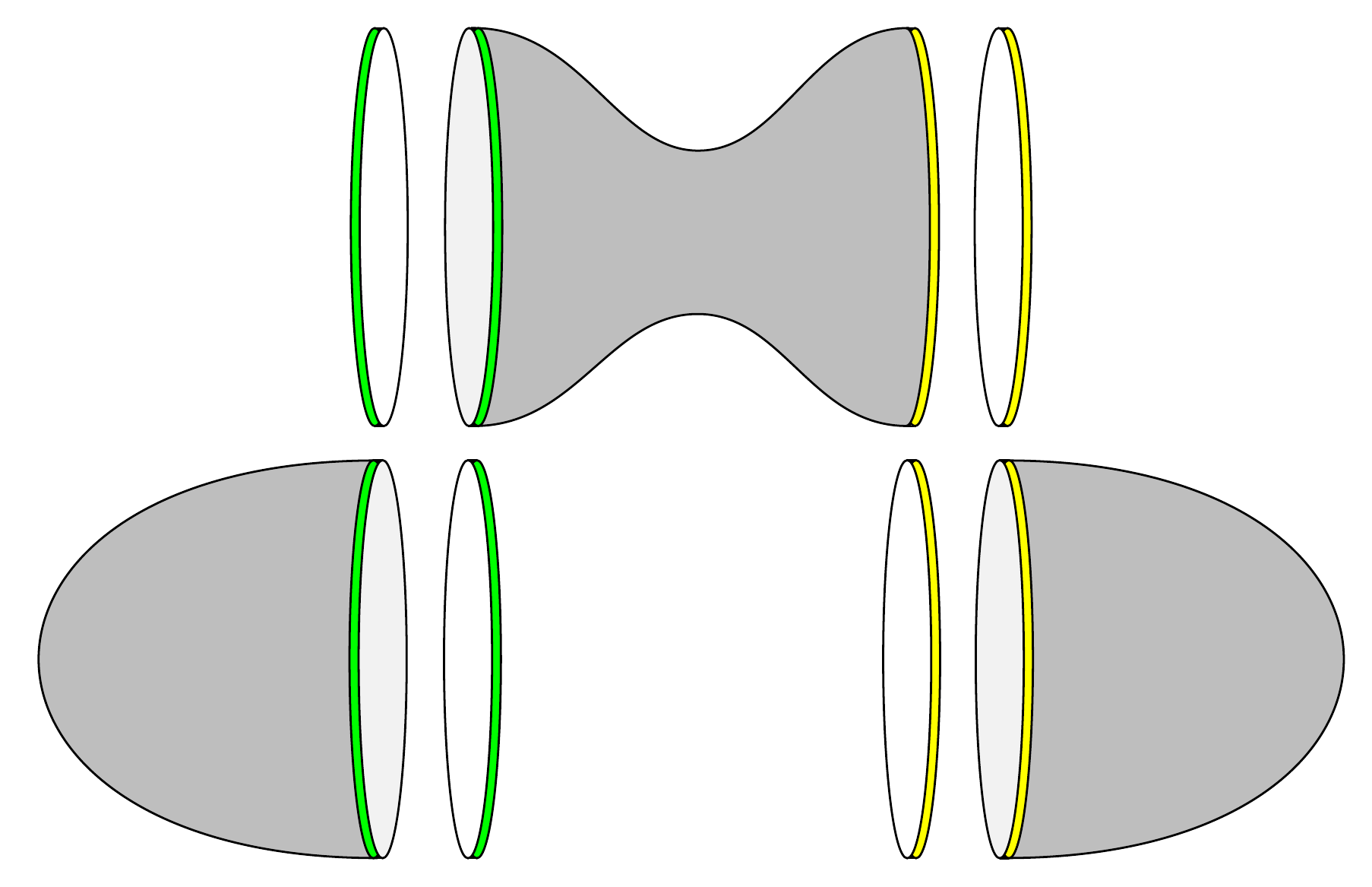}
\begin{pgfpicture}
\pgfputat{\pgfxy(-2.6,4.9)}{\pgfbox[center,center]{$S^1_{2,2}$}}
\pgfputat{\pgfxy(-1.9,4.87)}{\pgfbox[center,center]{$S^1_{3,1}$}}
\pgfputat{\pgfxy(-2.6,-0.153)}{\pgfbox[center,center]{$S^1_{2}$}}
\pgfputat{\pgfxy(-1.9,-0.153)}{\pgfbox[center,center]{$S^1_3$}}
\pgfputat{\pgfxy(-7.7,1.2)}{\pgfbox[center,center]{$\Sigma_1$}}
\pgfputat{\pgfxy(-3.75,4.5)}{\pgfbox[center,center]{$\Sigma_2$}}
\pgfputat{\pgfxy(0.2,1.2)}{\pgfbox[center,center]{$\Sigma_3$}}
\pgfputat{\pgfxy(-5.5,4.9)}{\pgfbox[center,center]{$S^1_1$}}
\pgfputat{\pgfxy(-4.8,4.87)}{\pgfbox[center,center]{$S^1_{2,1}$}}
\pgfputat{\pgfxy(-5.5,-0.153)}{\pgfbox[center,center]{$S^1_{1,1}$}}
\pgfputat{\pgfxy(-4.8,-0.153)}{\pgfbox[center,center]{$S^1_2$}}
\end{pgfpicture}
\vspace{0.1cm}
\caption{A nontrivial graph manifold decomposition of $S^2\times S^1$.}
\label{figure:s2s1}
\end{figure}
This $3$-manifold is clearly diffeomorphic to $S^2\times S^1$, by collapsing the $\Sigma_2$ cylinder portion. Endowing each $\Sigma_i$ of this minimal graph decomposition with non-flat metrics, we obtain a metric on $S^2\times S^1$ which has $\cvc(0)$, just as in Proposition~\ref{prop:graphcvc0}. Moreover, these metrics do not have higher rank. For instance, take $\gamma$ a geodesic that joins points $p_1\in \Sigma_1\times S^1_1$ and $p_2\in \Sigma_2\times S^1_2$ that lie in non-flat regions. Then, the line field $L$ along $\gamma$ cannot be parallel, since near $p_1$ it is tangent to $S^1_1$ and near $p_2$ it is tangent to $S^1_2$.
\end{proof}

\begin{remark}\label{rem:complicated2}
There are no such metrics with $\sec\geq0$, or even $\Ric\geq0$. This is a consequence of the Cheeger-Gromoll splitting theorem, as the universal covering must contain a line. Alternatively, note that a metric with $\sec\geq0$ on $S^1\times [-1,1]$ with geodesic boundary components is flat by Gauss-Bonnet. Hence, the local product structures in neighborhoods of the gluing loci extend across the cylinder.
\end{remark}

\subsection{Open examples}
The above gluing constructions can also be applied on open manifolds. For this, we need the following auxiliary result.

\begin{lemma}\label{lemma:strangeR2}
The upper half plane $\R^2_+$ admits smooth Riemannian metrics $\h$ with \emph{quasi-positive curvature}; i.e., $\sec\geq0$ and $\sec>0$ at a point, such that $\partial\R^2_+$ is totally geodesic and the metric is product on a collar neighborhood of $\partial\R^2$.
\end{lemma}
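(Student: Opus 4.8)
The plan is to realize $\h$ as a flat metric on $\R^2_+$ carrying a single conical singularity at an interior point, and then to smooth out that cone point by a standard rotationally symmetric desingularization. The essential structural observation to make first is that $\h$ \emph{cannot} be isometric to the standard flat half-plane outside a compact set. Indeed, if $\h$ agreed with the flat product metric off some compact set $C$, then applying the Gauss--Bonnet theorem to a large half-disk $D$ containing $C$, bounded by a segment of the geodesic $\partial\R^2_+$ and a far-away round arc, would give $\int_D K\,\dd A=0$ (the two right-angle corners and the round arc contribute a total turning of $2\pi$); since $\sec=K\ge 0$, this forces $K\equiv 0$ on $D$ and hence everywhere, precluding $\sec>0$. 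Thus the positive curvature must be carried by a nonzero \emph{deficit angle}, i.e.\ by a cone, rather than by a bump that returns to the flat product metric.

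Accordingly, I would fix the interior point $P=(0,1)$ and begin with the flat metric on $\R^2_+$ having a single conical singularity of cone angle $2\pi a$ at $P$, for some $a\in(0,1)$. This metric is locally Euclidean away from $P$, so there $K=0$; near the boundary $\{y=0\}$, which lies at distance $1$ from $P$, it coincides with the standard flat metric, so $\partial\R^2_+$ is a geodesic and the metric is a flat product on the collar $\{0\le y<\tfrac12\}$. The positive deficit $2\pi(1-a)$ concentrated at $P$ is exactly the total curvature, and is compatible with Gauss--Bonnet precisely because this flat metric is conical (it has nontrivial holonomy around $P$) rather than genuinely Euclidean outside a compact set.

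Next I would desingularize $P$. In geodesic polar coordinates $(\rho,\theta)$ centered at $P$, the cone metric has the warped-product form $\dd\rho^2+\psi(\rho)^2\,\dd\theta^2$ with $\psi$ affine of slope $a$. Choosing $\rho_0<1$ so that the disk $D(P,\rho_0)$ is disjoint from the boundary, I would replace $\psi$ on $D(P,\rho_0)$ by a smooth profile satisfying the usual smoothness conditions at the center ($\psi(0)=0$, $\psi'(0)=1$, with all even derivatives vanishing at $0$), remaining affine of slope $a$ near $\rho_0$ so as to match the surrounding cone to all orders, and interpolating \emph{concavely} in between. Since $\psi'$ must decrease from $1$ at the center to $a<1$ at the gluing radius, any such concave interpolation has $\psi''<0$ on a nonempty subinterval while staying nonpositive throughout; as $\psi>0$ for $\rho>0$, the Gaussian curvature $K=-\psi''/\psi$ is then $\ge 0$ everywhere and strictly positive on the annulus where $\psi''<0$. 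The resulting metric is smooth, equals the flat conical metric outside $D(P,\rho_0)$ (so the geodesic boundary and its flat product collar are untouched), has $\sec=K\ge 0$, and has $\sec>0$ at the points of that annulus. This is the asserted metric $\h$.

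The main obstacle is conceptual rather than computational: it is the constraint in the first paragraph. The most naive attempts fail for exactly this reason --- a localized bump that returns to the flat product metric is ruled out by Gauss--Bonnet, and a transverse warped product $\dd y^2+\phi(y)^2\,\dd x^2$ also fails, since $K\ge 0$ forces $\phi$ concave while the flat-collar condition forces $\phi'(0)=0$, so the concave positive $\phi$ would be driven to vanish at finite distance. Recognizing that the curvature must instead be produced by a smoothed cone of angle less than $2\pi$ is the crux; once this is in place, the verification above is routine. As a consistency check, doubling $(\R^2_+,\h)$ across its geodesic boundary produces a complete, reflection-symmetric metric on $\R^2$ with $\sec\ge 0$ and $\sec>0$ --- namely a flat plane with two smoothed positively curved caps --- which is manifestly smooth.
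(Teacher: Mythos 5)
Your construction is correct in substance and takes a genuinely different route from the paper's. The paper argues extrinsically: it doubles the first quadrant $Q$ (intrinsically, this is the flat cone of angle $\pi$), smooths it to the boundary of a convex region in $\R^3$, so that $\sec\geq0$ comes for free from convexity, with positivity near the smoothed corner and flatness outside a compact set, and then cuts this surface along a cross-sectional geodesic far from the corner; the flat cylindrical end supplies the geodesic boundary and product collar. Your route is intrinsic: you identify the angle deficit as the mechanism (your Gauss--Bonnet argument ruling out a compactly supported bump is a nice conceptual point the paper leaves implicit), then smooth a cone point by an explicit rotationally symmetric warped product with $K=-\psi''/\psi$. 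In effect, the paper's example is the case $a=\tfrac12$ of your family; your version buys an explicit curvature formula and avoids the soft extrinsic smoothing step, while the paper's convexity argument avoids all computation.

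Two points in your write-up need repair, both routine. First, the flat conical half-plane you start from exists only for $a\in[\tfrac12,1)$: a complete geodesic avoiding the apex of a cone subtends angle exactly $\pi$ around the apex in the developing map, so it is embedded only when $2\pi a\geq\pi$; equivalently, your own doubling check combined with Cohn--Vossen (total curvature $4\pi(1-a)\leq 2\pi$) forces $a\geq\tfrac12$. For such $a$ the starting metric is realized concretely by excising from the standard half-plane an upward-opening wedge of angle $2\pi(1-a)$ with vertex $P$ and gluing its edges by the rotation about $P$; this leaves the collar $\{0\leq y<\tfrac12\}$ literally untouched. Second, your cap profile cannot literally agree with $a\rho$ near $\rho_0$: concavity together with $\psi'(0)=1$ gives $\psi(\rho_0)=\int_0^{\rho_0}\psi'(s)\,\dd s>a\rho_0$. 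The cap must end affine, $\psi(\rho)=a\rho+b$ with $b>0$ (equivalently, the smooth cap has intrinsic radius strictly smaller than the cone radius of the disk it replaces), and be glued to the cone minus a disk along a circle of matching circumference and slope; since an affine-profile flat annulus is isometric to a conical annulus, the to-all-orders matching you want still holds. With these two adjustments your argument goes through and proves the lemma.
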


\begin{proof}
The desired metrics can be obtained by smoothing a standard construction in Alexandrov geometry. Consider the \emph{double} of the first quadrant $Q=\{(x,z):x\geq0,z\geq0\}$; i.e., two copies of $Q$ glued along the boundary. 
We can smooth this object along the gluing interface, in such way that the resulting surface $S$ is the boundary of a smooth convex region in $\R^3$, symmetric with respect to reflection on the $(x,z)$-plane. This surface has positive curvature near the point $o$ where the two origins $(0,0)\in Q$ were identified, and is flat on the complement of a compact set containing $o$. In particular, it contains two copies of $Q$ that lie in planes parallel to the $(x,z)$-plane, see Figure~\ref{figure:convex}.
\begin{figure}[htf]
\centering
\includegraphics[scale=0.35]{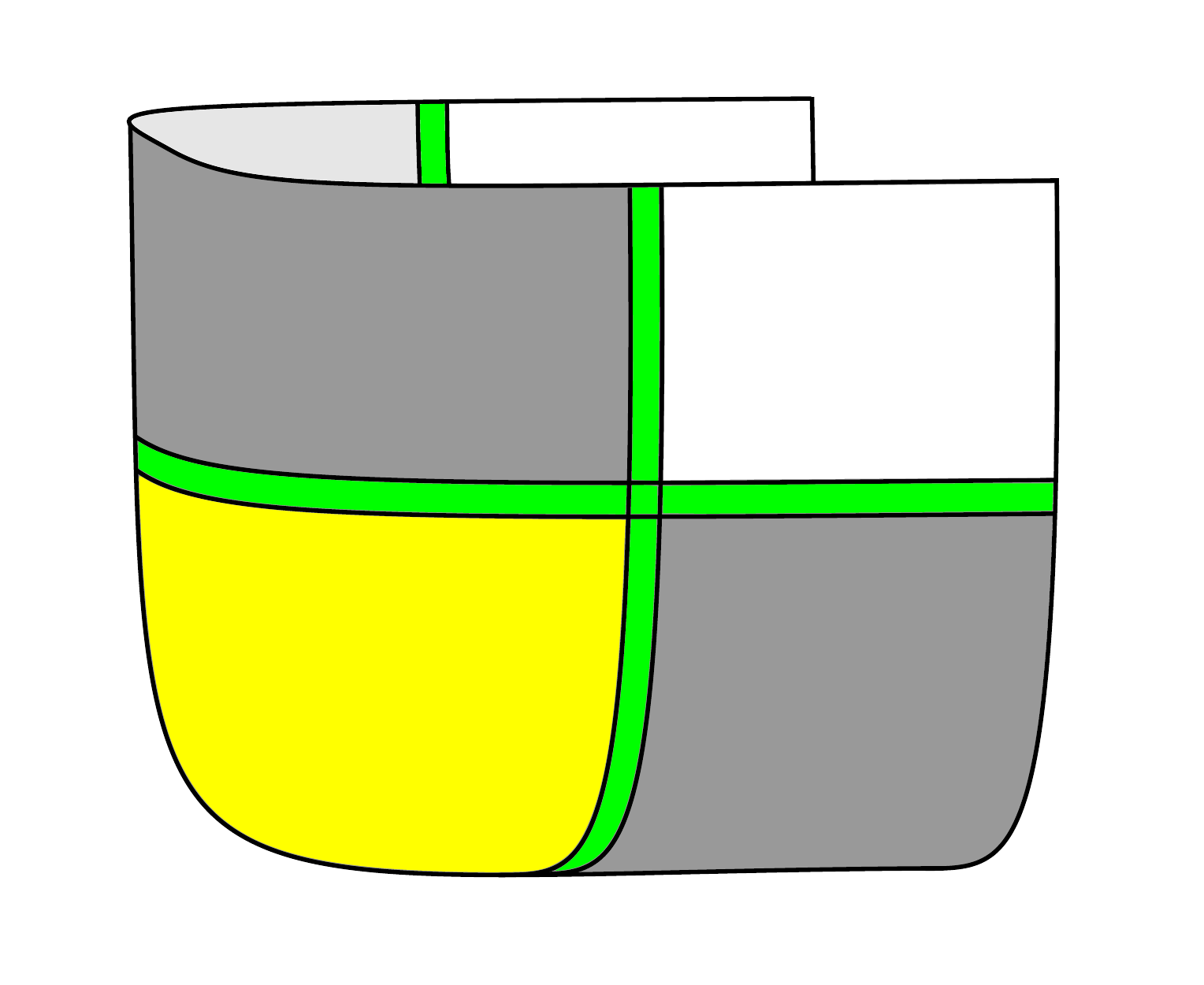}
\begin{pgfpicture}
\pgfputat{\pgfxy(-4.7,0.45)}{\pgfbox[center,center]{$o$}}
\pgfputat{\pgfxy(-1.6,2.9)}{\pgfbox[center,center]{$Q$}}
\end{pgfpicture}
\caption{The surface $S$ obtained by smoothing the double of $Q$.}
\label{figure:convex}
\end{figure}
The curve along which $S$ intersects a plane parallel to the $(x,y)$-plane (or the $(y,z)$-plane) is a geodesic, provided it is sufficiently far from $o$. By cutting $S$ along one such geodesic, we obtain a surface with boundary in $\R^3$, diffeomorphic to the upper half plane. The induced metric on $\R^2_+$ that makes this embedding isometric is the desired metric $\h$.
\end{proof}

\begin{proposition}\label{prop:r3}
There exist metrics on $\R^3$ with $\cvc(0)$ and $\sec\geq0$ that do not have higher rank.
\end{proposition}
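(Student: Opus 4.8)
The plan is to build the desired metric on $\R^3$ by gluing two copies of a product manifold along their boundaries via a \emph{flip} that interchanges flat and non-flat directions, in direct analogy with the closed examples in Corollary~\ref{cor:spherecvc0} and Corollary~\ref{cor:s2s1}. Concretely, let $(\R^2_+,\h)$ be the half-plane carrying the quasi-positively curved metric of Lemma~\ref{lemma:strangeR2}, so that $\h$ equals the flat product $dx^2+dy^2$ on a collar $\{0\le y<\varepsilon\}$ of $\partial\R^2_+=\{y=0\}$ and $\sec_\h>0$ at some point. Set $M_1=(\R^2_+,\h)\times\R$ with line coordinate $t_1$, and $M_2=\R\times(\R^2_+,\h)$ with line coordinate $t_2$ and half-plane coordinates $(x',y')$. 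Each $M_i$ is a product $N^2\times\R$, hence has $\sec\ge0$ and $\cvc(0)$; moreover each is diffeomorphic to a closed half-space, with totally geodesic boundary plane $\R^2$ along which the metric is the flat product.

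Next, I would glue $\partial M_1=\R_x\times\R_{t_1}$ to $\partial M_2=\R_{t_2}\times\R_{x'}$ via the map $\Phi$ setting $t_2=x$ and $x'=t_1$ (and matching inward normals $y\leftrightarrow y'$). The essential point is that $\Phi$ identifies the \emph{flat} direction $\partial_{t_1}$ of $M_1$ with the direction $\partial_{x'}$ tangent to $\partial\R^2_+$ inside $M_2$, and the flat direction $\partial_{t_2}$ of $M_2$ with $\partial_{x}$ in $M_1$; this is the analogue of the interchanging gluing $\varphi_B$, whereas the trivial identification would merely reproduce a splitting. Since both collars carry the flat product metric and $\Phi$ is an isometry of flat $\R^2$, the glued metric is smooth and is flat in a neighborhood of the gluing locus, while away from it it agrees with the $\sec\ge0$ metrics on $M_1$ and $M_2$; hence $\sec\ge0$ globally. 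Gluing two closed half-spaces along their boundary planes produces $M\cong\R^3$, the metric is complete, and every point has a neighborhood isometric to a product $N^2\times\R$, so $M$ has $\cvc(0)$.

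It remains to show that $M$ does not have higher rank, and here I would argue by contradiction. If $M$ had higher rank, then since $M\cong\R^3$ is simply connected, Theorem~\ref{thm:A} would force an isometric splitting $M=N\times\R$, producing a globally parallel line field $X$. At a nonisotropic point of a product, the same computation as in Proposition~\ref{prop:signed} shows the flat planes are exactly those containing the splitting direction, so $X$ must coincide on $\mathcal O$ with the rank-$3$ line field $L$ of Lemma~\ref{lemma:cvc}. In the non-flat interior of $M_1$ one has $L=\spn\{\partial_{t_1}\}$, and since $\partial_{t_1}$ is globally parallel on $M_1$, $X=\spn\{\partial_{t_1}\}$ throughout $M_1$; similarly $X=\spn\{\partial_{t_2}\}$ throughout $M_2$. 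But at a point of the gluing collar these force $X=\spn\{\partial_{x'}\}$ from the $M_1$ side and $X=\spn\{\partial_{t_2}\}$ from the $M_2$ side, and $\partial_{x'}\perp\partial_{t_2}$ there, a contradiction. Thus no global parallel line field exists and $M$ does not have higher rank.

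The main obstacle I expect is not the curvature or $\cvc(0)$ bookkeeping, which is immediate once the collars are flat products, but rather arranging the gluing so that the result is genuinely $\R^3$ \emph{and} the two flat directions are interchanged, and then converting "$L$ admits no global parallel extension" into a clean contradiction. The cleanest route, as above, is to use Theorem~\ref{thm:A} to reduce irreducibility to the nonexistence of a global parallel line field, and then exploit that $\partial_{t_i}$ is parallel on each piece together with the explicit flip to exhibit the orthogonality conflict at the gluing locus, exactly in the spirit of the proof of Corollary~\ref{cor:s2s1}.
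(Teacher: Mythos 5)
Your construction coincides with the paper's: the proof of Proposition~\ref{prop:r3} likewise takes two copies of the half-space product $(\R^2_+,\h)\times\R$ with $\h$ from Lemma~\ref{lemma:strangeR2} and glues them along their flat totally geodesic boundary planes via the flip $B(x,y)=(y,x)$ of \eqref{eq:AandB}, and the verification of $\sec\geq0$ and $\cvc(0)$ is the same collar-flatness bookkeeping you describe. The one genuine divergence is the final step: the paper rules out higher rank directly, ``by an argument totally analogous to that in Corollary~\ref{cor:s2s1},'' i.e.\ by taking a geodesic $\gamma$ joining nonisotropic points $p_\pm$ in the two halves and observing that the line field $L$ along $\gamma$ cannot be parallel, since near $p_-$ it is tangent to one $\R$-factor and near $p_+$ to the orthogonal one. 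You instead invoke the hard direction of Theorem~\ref{thm:A} (higher rank implies $\widetilde M=N\times\R$, hence a globally parallel line field), identify that line field with $L$ on $\mathcal O$, propagate it by parallelism through each half using that $\partial_{t_i}$ is parallel on $M_i$, and exhibit the orthogonality conflict at the gluing locus. Both routes are valid; yours is logically heavier (it leans on the paper's main theorem, which the paper's own one-line argument deliberately avoids, keeping the example independent of Theorem~\ref{thm:A}) but is arguably more airtight than the paper's sketch, since it converts ``$L$ cannot be parallel along $\gamma$'' into a precise statement about parallel line fields on the connected pieces rather than an appeal to the behavior of $L$ along a particular geodesic.
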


\begin{proof}
We use a gluing procedure analogous to that of Proposition~\ref{prop:graphcvc0}. Consider the decomposition $\R^3=\R^3_-\cup\R^3_+$, where $\R^3_-=\{(x,y,z):z\leq0\}$ and $\R^3_+=\{(x,y,z):z\geq0\}$. 
Define a product metric $(\R^3_+,\g)=(\R^2_+,\h)\times\R$, where $\h$ is given by Lemma~\ref{lemma:strangeR2}. Note that $\g$ restricts to a product metric on a collar neighborhood of the boundary $\partial \R^3_+=\{(x,y,0)\}$, such that $\partial\R^3_+$ is totally geodesic and isometric to flat Euclidean space. The desired metric on $\R^3$ is then obtained by endowing both half-spaces $\R^3_\pm$ with the metric $\g$ and gluing them together via the identification $B\colon\partial\R^3_-\to\partial\R^3_+$, $B(x,y)=(y,x)$, cf.\ \eqref{eq:AandB}. This metric clearly has $\cvc(0)$, and does not have higher rank by an argument totally analogous to that in Corollary~\ref{cor:s2s1}, considering a geodesic $\gamma$ that joins nonisotropic points $p_\pm\in\R^3_\pm$.
\end{proof}

Together with Corollary~\ref{cor:spherecvc0}, the above completes the proof of Theorem~\ref{thm:B}.

\begin{remark}\label{rem:complicated3}
A number of modifications in the construction of $\h_*$ lead to other interesting examples of metrics on $\R^3$ with $\cvc(0)$ and without higher rank, via the process described in Proposition~\ref{prop:r3}. For instance, $(\R^2_+,\h)$ can be constructed with an unbounded region of positive curvature. This is achieved by smoothing the double of the convex set $C_f:=\{(x,y):y\geq f(x)\}$, where $f\colon\R\to\R$ is a smooth function with $f'(x)<0$ and $f''(x)<0$ for $x<0$ and $f\equiv 0$ for $x\geq0$, and then cutting along any geodesic curve corresponding to $x=\ell>0$.

Let $L$ be the closure of the complement of the first quadrant $Q\subset\R^2$. By smoothing the double of $L$, we obtain metrics on $\h$ on $\R^2_+$ with \emph{quasi-negative curvature}; i.e., $\sec\leq0$ and $\sec<0$ at a point. The resulting metrics on $\R^3$ have $\cvc(0)$ and $\sec\leq0$, but do not have higher rank. Similar examples with unbounded regions of negative curvature can be constructed using the closure of the complement of $C_f$ as above. Finally, examples with mixed (but pointwise signed) sectional curvatures and infinitely many nonisotropic components can be constructed using the closure of $\{(x,y):y\leq\lfloor x\rfloor\}$, where $\lfloor x\rfloor\in\Z$ denotes the largest integer smaller than or equal to $x$.
\end{remark}

\section{Real-analyticity and Theorem~\ref{thm:C}}\label{sec:proofC}

The above constructions of metrics with $\cvc(0)$ do not produce \emph{real-analytic} metrics. We now prove that these constructions cannot be made real-analytic among manifolds with finite volume and pointwise signed sectional curvatures.

\begin{proof}[Proof of Theorem~\ref{thm:C}]
Let $(\widetilde M,\widetilde\g)$ be the Riemannian universal covering of $(M,\g)$. The result is trivially true if $(\widetilde M,\widetilde\g)$ is flat. Else, there exists a small metric ball $B$ in $\widetilde M$ all of whose points are nonisotropic. By Theorem~\ref{thm:summary}, there exists a parallel vector field $V$ on $B$ that spans the rank $3$ line field $L$. Since $(\widetilde M,\widetilde \g)$ is real-analytic and simply-connected, a classical result of Nomizu~\cite{nomizu} implies that the local Killing field $V$ admits a (unique) extension to a global Killing field on $\widetilde M$, which we also denote by $V$. To show that $V$ is parallel, choose $p\in B$, $q\in\widetilde M$, $w\in T_q\widetilde M$, and a real-analytic curve $\gamma\colon [0,1]\to\widetilde M$ with $\gamma(0)=p$ and $\gamma(1)=q$. 
Let $W(t)$ be a real-analytic vector field along $\gamma(t)$ with $W(1)=w$, for instance the parallel transport of $w$ along $\gamma(t)$.
The vector field $\nabla_{W(t)}V$ along $\gamma(t)$ vanishes for $0<t<\varepsilon$ such that $\gamma(t)\in B$, and hence vanishes identically by real-analyticity. In particular, $\nabla_w V=0$. Therefore, the Killing field $V$ is globally parallel and hence $\widetilde M$ splits isometrically as a product, by the de Rham decomposition theorem.
\end{proof}

\begin{remark}
Note that the above result also holds if the real-analytic Riemannian manifold $M$ has infinite volume, provided $M$ has a nonisotropic component with finite volume.
\end{remark}

\end{document}